\documentclass[12pt]{amsart}
\textwidth=150mm
\textheight=210mm
\hoffset=-10mm
\usepackage[utf8]{inputenc}
\usepackage[shortlabels]{enumitem}
\setlist[enumerate]{itemsep=0.5ex}
\usepackage{url}
\usepackage{hyperref}
\usepackage{pdfpages}
\usepackage[export]{adjustbox}

\usepackage{amsbsy}
\usepackage{amsfonts}
\usepackage{amsgen}
\usepackage{amsmath}
\usepackage{amsopn}
\usepackage{amssymb}
\usepackage{amstext}
\usepackage{amsthm}
\usepackage{amsxtra}
\usepackage{mathtools}
\usepackage{thmtools}
\usepackage{dsfont}
\usepackage{cleveref}

\theoremstyle{definition}
\newtheorem{definition}{Definition}[section]
\newtheorem{example}[definition]{Example}
\newtheorem{remark}[definition]{Remark}
\newtheorem{question}[definition]{Question}

\theoremstyle{plain}
\newtheorem{proposition}[definition]{Proposition}
\newtheorem{lemma}[definition]{Lemma}
\newtheorem{theorem}[definition]{Theorem}
\newtheorem{corollary}[definition]{Corollary}

\newcommand{\N}{\mathbb{N}}

\newcommand{\C}{\mathbb{C}}

\newcommand{\A}{\mathcal{A}}
\newcommand{\B}{\mathcal{B}}
\newcommand{\OO}{\mathcal{O}}
\newcommand{\PP}{\mathcal{P}}

\newcommand{\abs}[1]{\left\lvert#1\right\rvert}

\DeclareMathOperator{\id}{id}
\DeclareMathOperator{\Spec}{Spec}
\DeclareMathOperator{\Aut}{Aut}

\DeclareMathOperator{\op}{op}
\DeclareMathOperator{\Ban}{Ban}
\DeclareMathOperator{\Bic}{Bic}
\DeclareMathOperator{\GH}{GH}

\newcommand*{\AutBanica}{\Aut^+_{\Ban}}
\newcommand*{\AutBichon}{\Aut^+_{\Bic}}
\newcommand*{\AutGHBichon}{\Aut^+_{\GH,\Bic}}
\newcommand*{\Adj}{A_{\Gamma}}

\begin{document}

\title{Quantum Automorphism Groups of Hypergraphs}
\author{Nicolas Faroß}
\address{Saarland University, Fachbereich Mathematik, Postfach 151150, 66041 Saarbr\"ucken, Germany}
\email{faross@math.uni-sb.de}
\date{\today}
\keywords{hypergraphs, automorphism groups, quantum symmetries, compact quantum groups, graph $C^*$-algebras}
\thanks{The author thanks his supervisor Moritz Weber for many helpful comments and suggestions.
This article is part of the author's PhD thesis.
This work is a contribution to the SFB-TRR 195.}

\begin{abstract}
\noindent We introduce a quantum automorphism group for 
hypergraphs, which turns out to generalize the quantum 
automorphism group of Bichon for classical graphs. Further, 
we show that our quantum automorphism group acts 
on hypergraph $C^*$-algebras as recently defined. In particular, 
this action generalizes the one on graph $C^*$-algebras 
by Schmidt-Weber in 2018.
\end{abstract}

\maketitle

\section{Introduction}\label{sec:introduction}

\noindent In~\cite{wang98}, Wang introduced quantum automorphism groups of finite space.
These are compact quantum groups in the sense of Woronowicz~\cite{woronowicz87, woronowicz91}
and they can be used to describe symmetries in the setting of $C^*$-algebras.
A particular example is the quantum permutation group $S_n^+$, 
which generalizes the classical symmetry group of $n$ points $S_n$. 
It can be defined by the universal unital $C^*$-algebra 
\[
  C(S_n^+) := C^*( u_{ij} \ | \ \text{${(u_{ij})}$ is a magic unitary}),
\]
where a $n \times n$ matrix $u := (u_{ij})$ is called a magic unitary, if 
\[
  u_{ij}^2 = u_{ij}^* = u_{ij},
  \qquad 
  \sum_{k=1}^n u_{ik} = \sum_{k=1}^n u_{kj} = 1 
  \qquad 
  (1 \leq i, j \leq k).
\]
Magic unitaries are also called quantum permutations since magic unitaries with entries in $\C$ are exactly classical permutation matrices.

In the same setting, Bichon~\cite{bichon03} and Banica~\cite{banica05} introduced two versions
of quantum automorphism groups of finite graphs.
These quantum groups generalize the classical automorphism group of a graph by imposing the additional 
relation $A_\Gamma u = u A_\Gamma$ on a magic unitary $u$. Here, $A_\Gamma$ denotes the adjacency matrix of a graph $\Gamma$, which
requires the magic unitary $u$ to respect the graph structure.
Quantum automorphism groups of graphs provide a large class of examples
of compact quantum groups and have for example been studied in~\cite{chassaniol19, schmidt20, levandovskyy22, bruyn23}.
In particular, these quantum automorphism groups have been further generalized to different structures like 
multigraphs~\cite{goswami23}, Hadamard matrices~\cite{gromada22} and quantum graphs~\cite{brannan20, brannan22}.

\subsection{Quantum automorphism groups of hypergraphs}

In this paper, we present a definition of a quantum automorphism group for hypergraphs.
Hypergraphs generalize classical graphs by allowing an edge to connect not only two but an arbitrary number
of vertices. This makes hypergraphs quite general and gives them many applications in 
discrete mathematics and computer science~\cite{ausiello17, gallo93}.
In particular, it is possible to form the dual of a hypergraph by interchanging its vertices 
and edges, which is in general not possible for classical graphs.
See~\cite{berge84} for further information on hypergraphs.

In the following, a hypergraph $\Gamma := (V, E)$ is given 
by a finite set of vertices $V$, a finite set of edges $E$ and 
two maps 
\[
  s \colon E \to \PP(V),
  \quad 
  r \colon E \to \PP(V).
\]
An edge $e \in E$ can be depicted by an arrow 
from the set of source vertices $s(e)$ to the set of range vertices $r(e)$.
Thus, we can view classical directed edges as hyperedges with $\abs{s(e)} = \abs{r(e)} = 1$.
Note that we consider directed hypergraphs which can 
also have empty edges and multi-edges. In this setting, 
our quantum automorphism group $\Aut^+(\Gamma)$ of a hypergraph $\Gamma$
is given by the following compact matrix quantum group.

\begingroup
\def\thedefinition{1}
\begin{definition}[\Cref{def:hypergraph-qaut}]
  Let $\Gamma := (V, E)$ be a hypergraph and $\A$ the universal unital $C^*$-algebra 
  with generators $u_{vw}$ for all $v, w \in V$ and $u_{ef}$ for all $e,f \in E$
  such that
  \begin{enumerate}
    \item $u_V := {(u_{v w})}_{v, w \in V}$ and $u_E := {(u_{e f})}_{e, f \in E}$ are magic unitaries,
    \item $A_s u_E = u_V A_s$ and $A_r u_E = u_V A_r$, where $A_r, A_s \in \C^{V \times E}$ are defined by 
    \[
      {(A_s)}_{ve} := \begin{cases}
        1 & \text{if $v \in s(e)$,} \\
        0 & \text{otherwise,}
      \end{cases}
      \qquad
      {(A_r)}_{ve} := \begin{cases}
        1 & \text{if $v \in r(e)$,} \\
        0 & \text{otherwise,}
      \end{cases}
    \]
    for all $v \in V$ and $e \in E$.
  \end{enumerate}
  Then $\Aut^+(\Gamma) := (\A, u_V \oplus u_E)$ is the \textit{quantum automorphism group of the hypergraph $\Gamma$}.
\end{definition}
\endgroup

Intuitively, $\Aut^+(\Gamma)$ is given by a quantum permutation $u_V$ on the 
vertices and quantum permutation $u_E$ on the edges, which are compatible
by intertwining the incidence matrices $A_s$ and $A_r$. If $u_V$ and $u_E$ are classical 
permutation matrices, then this definition gives exactly the classical 
automorphism group of a hypergraph, see \Cref{sec:classic-hypergraph-aut}.

Note that in contrast to the quantum automorphism groups of Bichon and Banica, 
we include a second magic unitary for the edges. This is 
necessary to capture quantum symmetries between multi-edges,
which are allowed in our definition of hypergraph.
See~for example \Cref{sec:example-max-qsym} for the quantum symmetries of a concrete family of hypergraphs with multi-edges.
However, if a hypergraph $\Gamma$ or its dual $\Gamma^*$ have no multi-edges, then our definition 
reduces to only one magic unitary.

\begingroup
\def\thetheorem{1}
\begin{theorem}[\Cref{corr:subgroup-SV}, \Cref{corr:subgroup-SE}]
Let $\Gamma:= (V, E)$ be a hypergraph.
\begin{enumerate}
  \item If $\Gamma$ has no multi-edges, then $\Aut^+(\Gamma) \subseteq S_V^+$.
  \item If $\Gamma^*$ has no multi-edges, then $\Aut^+(\Gamma) \subseteq S_E^+$.
\end{enumerate}
\end{theorem}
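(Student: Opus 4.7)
The plan is to prove (1) by showing that already the subalgebra $C^*(u_{vw} : v, w \in V) \subseteq \A$ contains all generators $u_{ef}$ for $e, f \in E$; once this is established, the evident Hopf $*$-morphism $C(S_V^+) \to \A$ sending the universal magic unitary of $S_V^+$ to $u_V$ (which is well-defined since $u_V$ is a magic unitary in $\A$) becomes surjective, yielding $\Aut^+(\Gamma) \subseteq S_V^+$. Part (2) then follows by applying (1) to $\Gamma^*$ and invoking the natural isomorphism $\Aut^+(\Gamma) \cong \Aut^+(\Gamma^*)$ that interchanges $u_V$ with $u_E$ and $A_s$ with the corresponding incidence matrices of $\Gamma^*$.

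For the core content of (1), I would extract from $A_s u_E = u_V A_s$ and $A_r u_E = u_V A_r$ the coordinate identities
\[
\pi^s_{v,f} := \sum_{w \in s(f)} u_{vw} = \sum_{e \in E :\, v \in s(e)} u_{ef}, \qquad \pi^r_{v,f} := \sum_{w \in r(f)} u_{vw} = \sum_{e \in E :\, v \in r(e)} u_{ef},
\]
valid for each $v \in V$ and $f \in E$. The key observation is that each $\pi^s_{v,f}$ and $\pi^r_{v,f}$ is simultaneously an element of $C^*(u_V)$ and a partial sum of the pairwise orthogonal family $\{u_{ef} : e \in E\}$ (the $f$-th column of $u_E$). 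Using $u_{ef} u_{e'f} = \delta_{ee'} u_{ef}$, any two such projections commute: for instance $\pi^s_{v,f} \pi^s_{v',f} = \sum_{e :\, v, v' \in s(e)} u_{ef} = \pi^s_{v',f} \pi^s_{v,f}$, and analogously for mixed $s/r$ products.

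With this commuting family in hand, for each edge $e \in E$ I would form
\[
q_{e,f} := \prod_{v \in s(e)} \pi^s_{v,f} \prod_{v \in V \setminus s(e)} (1 - \pi^s_{v,f}) \prod_{v \in r(e)} \pi^r_{v,f} \prod_{v \in V \setminus r(e)} (1 - \pi^r_{v,f}) \in C^*(u_V),
\]
which is well-defined since all factors commute. A direct expansion (again using orthogonality in the $f$-th column of $u_E$) gives $q_{e,f} = \sum_{e' \in E :\, s(e') = s(e),\, r(e') = r(e)} u_{e'f}$. Under the no multi-edges hypothesis on $\Gamma$, the right-hand sum reduces to the single term $u_{ef}$, so $q_{e,f} = u_{ef} \in C^*(u_V)$, as required.

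The main obstacle is the commutativity claim for the $\pi^s, \pi^r$ projections: individually they live in $C^*(u_V)$, where projections are not expected to commute, and the commutation only becomes transparent after the identification with partial sums drawn from the single orthogonal column of $u_E$. The remaining step, expanding the big product into a sum of $u_{e'f}$, is an inclusion-exclusion over the membership conditions ``$v \in s(e)$'' and ``$v \in r(e)$'' and is routine once commutativity is in place.
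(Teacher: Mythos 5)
Your proof is correct and takes essentially the same route as the paper: the coordinate form of the intertwiner relations identifies $\sum_{w \in s(f)} u_{vw}$ with a partial sum of the pairwise orthogonal column ${\{u_{ef}\}}_{e \in E}$, products of such sums collapse to sums over intersections of the corresponding edge sets, and the no-multi-edge hypothesis isolates the single term $u_{ef} \in C^*(u_V)$, with part (2) following by duality. Your commuting product $\prod_{v} \pi_{v,f} \prod_{v} (1-\pi_{v,f})$ is a repackaging of the paper's explicit inclusion-exclusion in \Cref{lem:rep-edges-1} and \Cref{lem:rep-edges-inc-exc}: expanding the complementary factors $(1-\pi)$ reproduces the alternating sum there, so the two arguments coincide up to presentation.
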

\endgroup

Since hypergraphs are a generalization of classical graphs and multigraphs,
it is natural to ask how our quantum automorphism group
relates to the quantum automorphism groups of Bichon~\cite{bichon03}, Banica~\cite{banica05} and Goswami-Hossain~\cite{goswami23}.
Denote with $\AutBichon(\Gamma)$ the quantum automorphism group of Bichon and with
$\AutGHBichon(\Gamma)$ the quantum automorphism group of Goswami-Hossain in the sense of Bichon.
Then the following theorem shows, that we obtain both quantum groups as a special case
when encoding classical graphs and multigraphs as hypergraphs.

\begingroup
\def\thetheorem{2}
\begin{theorem}[\Cref{thm:directed-graph-bichon}, \Cref{thm:simple-graph-bichon}, \Cref{thm:multigraph-GH}] 
\leavevmode
\begin{enumerate}
\item Let $\Gamma := (V, E)$ be a directed graph as in \Cref{def:directed-graph} and define 
\[  
  s(v, w) := \{ v \},
  \quad 
  r(v, w) := \{ w \}
  \quad 
  \forall (v, w) \in E.
\]
Then $\Gamma$ is a hypergraph with $\Aut^+(\Gamma) = \AutBichon(\Gamma)$.
\item Let $\Gamma := (V, E)$ be a simple graph as in \Cref{def:simple-graph} and define 
\[  
  s(\{v, w\}) := \{v, w\}, 
  \quad 
  r(\{v, w\}) := \{v, w\}
  \quad 
  \forall \{v, w\} \in E.
\]
Then $\Gamma$ is a hypergraph with  $\Aut^+(\Gamma) = \AutBichon(\Gamma)$.
\item Let $\Gamma := (V, E)$ be a multigraph as in \Cref{def:multi-graph} 
with source map $s'$ and range map $r'$. Define 
\[  
  s(e) := \{ s'(e) \}, 
  \quad
  r(e) := \{ r'(e) \}
  \quad 
  \forall e \in E.
\]
Then $\Gamma$ is a hypergraph with $\Aut^+(\Gamma) = \AutGHBichon(\Gamma)$.
\end{enumerate}
\end{theorem}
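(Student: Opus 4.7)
The plan is to prove each of the three equalities by exhibiting mutually inverse $*$-homomorphisms between the universal $C^*$-algebras defining $\Aut^+(\Gamma)$ and the corresponding Bichon or Goswami--Hossain quantum group. In each case the vertex magic unitary $u_V$ is common to both definitions, so the task reduces to translating between the edge relations of $\Aut^+(\Gamma)$ and the adjacency relation used by Bichon (or the source/range relations used by Goswami--Hossain).

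For the forward direction ($\Aut^+(\Gamma)$ surjects onto $\AutBichon(\Gamma)$ or $\AutGHBichon(\Gamma)$) I would exploit the matrix factorisation $A_\Gamma = A_s A_r^T$, which holds exactly for directed graphs under the given encoding. Since every magic unitary satisfies $u^* = u^T$ and $u\,u^T = I$, the relation $A_r u_E = u_V A_r$ can be rewritten as $A_r^T u_V = u_E A_r^T$. Combined with $A_s u_E = u_V A_s$ this yields the chain
\[
  A_\Gamma u_V = A_s A_r^T u_V = A_s u_E A_r^T = u_V A_s A_r^T = u_V A_\Gamma,
\]
which is Bichon's commutation relation. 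For simple graphs one has $A_s = A_r = A$ and $A A^T = A_\Gamma + D$, so the same chain gives $A A^T u_V = u_V A A^T$; the remaining step is to show that the degree matrix $D$ commutes with $u_V$, which follows from the identity $A u_E = u_V A$ summed over one index together with row orthogonality of $u_V$. For multigraphs the relations $A_s u_E = u_V A_s$ and $A_r u_E = u_V A_r$ are precisely the source/range compatibility in the Goswami--Hossain setup and nothing further is needed.

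For the reverse direction I would construct $u_E$ from $u_V$ in the natural way: set
\[
  u_{(v,w),(v',w')} := u_{vv'}\, u_{ww'} \quad \text{(directed case)}, \qquad
  u_{\{v,w\},\{v',w'\}} := u_{vv'}\, u_{ww'} + u_{vw'}\, u_{wv'} \quad \text{(simple case)},
\]
while for multigraphs one reuses the edge magic unitary already present in the Goswami--Hossain data. One then verifies that $u_E$ is a magic unitary and that the intertwinings $A_s u_E = u_V A_s$ and $A_r u_E = u_V A_r$ hold; both reduce to short manipulations with row and column orthogonality in $u_V$ together with the Bichon relation $A_\Gamma u_V = u_V A_\Gamma$.

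The main obstacle is checking that the candidate entries of $u_E$ in the reverse direction are self-adjoint projections. Self-adjointness amounts to the commutation $u_{vv'} u_{ww'} = u_{ww'} u_{vv'}$, which does \emph{not} follow from the magic unitary axioms alone. This commutativity under Bichon's relation, whenever $(v,w)$ and $(v',w')$ are simultaneously edges or simultaneously non-edges, is the central technical input -- it is the content of Bichon's original construction of $\AutBichon(\Gamma)$, and its analogue is established in Goswami--Hossain. Once this is in hand, everything else is bookkeeping, and the two $*$-homomorphisms obtained are mutually inverse on generators by construction.
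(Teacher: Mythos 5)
Your overall architecture --- mutually inverse $*$-homomorphisms, the factorisation $\Adj = A_s A_r^*$ (resp.\ $A_sA_s^* = \Adj + D$ with $D$ the degree matrix, which commutes with $u_V$ by the counting argument of \Cref{prop:N-intertwiner}), and the formulas $u_{(v,w)(v',w')} = u_{vv'}u_{ww'}$ and $u_{\{v,w\}\{v',w'\}} = u_{vv'}u_{ww'}+u_{vw'}u_{wv'}$ --- is exactly the paper's. The gap is in how you handle the commutation relation. You correctly single out $u_{vv'}u_{ww'} = u_{ww'}u_{vv'}$ as the central technical input, but you locate it in the wrong direction: when you work inside $C(\AutBichon(\Gamma))$ it costs nothing, being Relation (3) of \Cref{def:bichon-qaut}. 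Where it must actually be \emph{proved} is inside $C(\Aut^+(\Gamma))$: to invoke the universal property of $C(\AutBichon(\Gamma))$ and obtain the surjection $C(\AutBichon(\Gamma)) \to C(\Aut^+(\Gamma))$ you must verify all three of Bichon's relations for $u_V$, and your chain $\Adj u_V = A_sA_r^*u_V = A_su_EA_r^* = u_V\Adj$ only delivers the second. The missing step is the content of \Cref{lem:bichon-E} and \Cref{lem:simple-graph-u-E}: the intertwiner relations give $u_{v_1v_2} = \sum_{(v_3,w_3)\in E,\ v_3=v_1} u_{(v_3,w_3)(v_2,w_2)}$ and $u_{w_1w_2} = \sum_{(v_4,w_4)\in E,\ w_4=w_1} u_{(v_4,w_4)(v_2,w_2)}$, and orthogonality of distinct entries of $u_E$ in a common column collapses either order of the product to the single term $u_{(v_1,w_1)(v_2,w_2)}$, whence commutativity. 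This is precisely where the extra edge magic unitary of \Cref{def:hypergraph-qaut} earns its keep; it is not supplied by Bichon's or Goswami--Hossain's constructions, which live on the other side of the isomorphism.

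The second gap is the multigraph case, which you declare needs ``nothing further''. In fact $C(\AutGHBichon(\Gamma))$ is generated by the \emph{edge} magic unitary alone, so to build the map $C(\Aut^+(\Gamma)) \to C(\AutGHBichon(\Gamma))$ you must first manufacture vertex elements $\widehat u_{vw} := \sum_{f\in E,\ s(f)=w}\widehat u_{ef}$ (for any $e$ with $s(e)=v$, or the analogous sum over ranges) and prove they form a magic unitary. Well-definedness uses Relations (2)--(4) of \Cref{def:qaut-GH}, and the column sums $\sum_{w} \widehat u_{wv} = 1$ are not elementary: the paper (\Cref{lem:GH-uV-magic}) obtains them by showing $\widehat u_V$ is a corepresentation and invoking Woronowicz's theorem that a right-invertible corepresentation is invertible. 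Conversely, for the map out of $C(\AutGHBichon(\Gamma))$ you must check that $u_E$ in the hypergraph algebra satisfies GH-Relations (2)--(4); Relation (3), that $u_{ef}=0$ when $s(e)$ is internal but $s(f)$ is a source, again requires the counting argument of \Cref{prop:N-intertwiner}. None of this is bookkeeping, and as written your proposal does not engage with it.
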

\endgroup

Note that the previous theorem can also be used to construct many concrete examples of quantum automorphism groups of hypergraphs.

\subsection{Quantum symmetries of hypergraph $C^*$-algebras}

Related to quantum automorphism groups of classical graphs is the study of quantum symmetries
of graph $C^*$-algebras in~\cite{schmidt18,joardar18}. 
These $C^*$-algebras are defined in terms of an underlying graph 
and have been studied since the 1980's. They include many examples like matrix algebras, continuous functions on the circle or the 
Cuntz algebras, see~\cite{raeburn05} for more details. 
Recently, Trieb-Weber-Zenner~\cite{trieb24} introduced hypergraph $C^*$-algebras, 
which generalize graph $C^*$-algebras to the setting hypergraphs. 
This new class includes all graph $C^*$-algebras but also new examples of non-nuclear $C^*$-algebras.
See also the recent work by Schäfer-Weber~\cite{schaefer24} which characterizes the nuclearity of hypergraph $C^*$-algebras 
in terms of minors of the underlying hypergraph.

In\cite{schmidt18}, Schmidt-Weber showed that Banica's quantum automorphism group 
acts maximally on the corresponding graph $C^*$-algebra.
We generalize this result to 
hypergraphs by showing that our quantum automorphism group 
$\Aut^+(\Gamma)$ acts on the corresponding hypergraph $C^*$-algebra $C^*(\Gamma)$. 
As in the case of graph $C^*$-algebras, hypergraph $C^*$-algebras are generated by a family of projections ${\{ p_v \}}_{v \in V}$ and a family of partial isometries ${\{ s_e \}}_{e \in E}$, see \Cref{def:hypergraph-Cstar-alg}.
Our action is then given by permuting these generators using the 
magic unitaries $u_V$ and $u_E$, i.e.
\[
  \alpha(p_v) = \sum_{w \in V} p_w \otimes u_{wv},
  \qquad
  \alpha(s_e) = \sum_{f \in E} s_f \otimes u_{fe}
  \qquad
  \forall v \in V, \, e \in E.
\]

It turns out that this action equals the one of Schmidt-Weber in the 
case of classical graphs, but our quantum automorphism group is no longer maximal with respect to it. However, we obtain maximality 
under the additional assumption that $\Aut^+(\Gamma)$ also acts on 
$C^*(\Gamma')$, where $\Gamma'$ is obtained by inverting all edge directions and 
by interchanging the vertices and edges of $\Gamma$. 

\begingroup
\def\thetheorem{3}
\begin{theorem}[\Cref{thm:action-existence}, \Cref{thm:action-maximal}]
Let $\Gamma := (V, E)$ be a hypergraph
and $\Gamma' := {(\Gamma^{*})}^{\op}$.
Then 
$\Aut^+(\Gamma)$ is the largest compact matrix quantum group 
which acts faithfully
on both $C^*(\Gamma)$ and $C^*(\Gamma')$ via 
\begin{align*}
  \alpha_1 \colon C^*(\Gamma) &\to C^*(\Gamma) \otimes C(\Aut^+(\Gamma)), \\
  \alpha_2 \colon C^*(\Gamma') &\to C^*(\Gamma') \otimes C(\Aut^+(\Gamma))
\end{align*}
with 
\begin{alignat*}{3}
  \alpha_1(p_v) &= \sum_{w \in V} p_w \otimes u_{wv}
  \quad
  \forall v \in V,
  &\qquad
  \alpha_1(s_e) &= \sum_{f \in E} s_f \otimes u_{fe}
  \quad
  \forall e \in E, \\
  \alpha_2(p_e) &= \sum_{f \in E} p_f \otimes u_{fe}
  \quad
  \forall e \in E,
  &\qquad
  \alpha_2(s_v) &= \sum_{w \in V} s_w \otimes u_{wv}
  \quad
  \forall v \in V.
\end{alignat*}
\end{theorem}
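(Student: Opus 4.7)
My plan splits the theorem into two parts: the \emph{existence} of the coactions $\alpha_1, \alpha_2$, and their \emph{universality}. For existence, I would invoke the universal property of the Trieb--Weber--Zenner hypergraph $C^*$-algebra $C^*(\Gamma)$ by defining $\alpha_1$ on the canonical generators via the prescribed formulas and verifying that the images satisfy the defining relations. Orthogonality of $\{\sum_{w} p_w \otimes u_{wv}\}_{v \in V}$ and the partial-isometry relations for $\sum_{f} s_f \otimes u_{fe}$ follow mechanically from $u_V, u_E$ being magic unitaries. The nontrivial verifications are the Cuntz--Krieger-type relations tying $s_e$ to the vertex projections indexed by $s(e)$ and $r(e)$; these transform correctly precisely because the intertwining identities
\[
  A_s u_E = u_V A_s, \qquad A_r u_E = u_V A_r
\]
in $\A$ allow one to rewrite, for example, $\sum_{v \in r(e)} \alpha_1(p_v)$ as $\sum_{w \in V} p_w \otimes (u_V A_r)_{we}$, which is exactly the shape produced by $\alpha_1(s_e)^* \alpha_1(s_e)$. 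The coaction axioms then follow routinely from $\Delta(u_{ij}) = \sum_k u_{ik} \otimes u_{kj}$. The construction of $\alpha_2$ is the same argument after observing that $\Gamma' = (\Gamma^*)^{\op}$ interchanges the roles of vertices and edges and swaps $A_s$ with $A_r$; no new ideas are required.

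For maximality, let $(B, w)$ be any compact matrix quantum group with $w = w_V \oplus w_E$ carrying faithful actions $\beta_1, \beta_2$ on $C^*(\Gamma)$ and $C^*(\Gamma')$ of the prescribed form. I would extract the defining relations of $C(\Aut^+(\Gamma))$ on the entries of $w$. First, applying $\beta_1$ to $p_v = p_v^* = p_v^2$ and $p_v p_{v'} = 0$ for $v \neq v'$ and using linear independence of $\{p_w\}_{w \in V}$ in $C^*(\Gamma)$ forces $w_V$ to be a magic unitary; the analogous computation with $\beta_2$ on the vertex projections of $\Gamma'$ (indexed by $E$) gives that $w_E$ is a magic unitary. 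Second, translating the source/range relations of $C^*(\Gamma)$ through $\beta_1$ yields one incidence intertwiner, while the source/range relations of $C^*(\Gamma')$ under $\beta_2$ yield the other; together these give $A_s w_E = w_V A_s$ and $A_r w_E = w_V A_r$. The universal property of $C(\Aut^+(\Gamma))$ then produces the desired surjective Hopf $*$-homomorphism $C(\Aut^+(\Gamma)) \twoheadrightarrow C(B)$, identifying $B$ as a quantum subgroup of $\Aut^+(\Gamma)$.

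The main obstacle is the maximality half, and specifically the claim that one action alone is insufficient. Here I would have to trace the precise form of the hypergraph $C^*$-algebra relations and confirm that $C^*(\Gamma)$ witnesses at most one of the two incidence matrices (which is the reason Schmidt--Weber's theorem in the graph case does not extend without the auxiliary action on $C^*(\Gamma')$); the dual construction $\Gamma' = (\Gamma^*)^{\op}$ is tailor-made to swap the roles of $A_s$ and $A_r$ and restore the missing relation. Verifying this honestly may require a small hypergraph exhibiting that the full Bichon-type intertwiner is missed by $\beta_1$ alone but recovered after combining both actions. A secondary but routine technical point is the linear independence of the vertex projections and of suitable monomials used to read off the relations from equalities in $C^*(\Gamma) \otimes C(B)$; I would handle this by invoking a faithful conditional expectation or an explicit spanning family, as in the graph $C^*$-algebra setting.
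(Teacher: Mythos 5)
Your proposal is correct and follows essentially the same route as the paper: existence of $\alpha_1$ (and of $\alpha_2$ via the identification $\Aut^+(\Gamma') = \Aut^+(\Gamma)$) through the universal property of the hypergraph $C^*$-algebra together with the intertwiner relations, and maximality by extracting the magic-unitary conditions and the \emph{range} intertwiner from each of the two actions separately, with $A_{r'} = A_s^*$ and \Cref{prop:star-intertwiner} supplying the missing source intertwiner exactly as you anticipate. The only step you gloss over is that verifying the third hypergraph-$C^*$-relation (for non-sink vertices) additionally requires $u_{wv} = 0$ whenever $w$ is a sink and $v$ is not, which the paper derives from the intertwiner relations via the degree-counting argument of \Cref{prop:N-intertwiner}.
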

\endgroup

\subsection*{Overview of the article}

We begin in \Cref{sec:preliminaries} with some 
preliminaries about graphs, hypergraphs, quantum groups
as well as graph and hypergraph $C^*$-algebras.
Then we introduce our quantum automorphism groups of hypergraphs in \Cref{sec:qsym-hypergraphs} and 
explore some first properties. Further, we give examples of hypergraphs with maximal quantum symmetries and 
consider the case of hypergraphs without multi-edges.
In~\Cref{sec:qsym-graphs}, we compute the quantum automorphism groups of hypergraphs 
which are constructed from classical graphs and multigraphs. In particular, we show that these agree 
with the quantum automorphism group of Bichon or its multigraph version in the sense of Goswami-Hossain.
Finally, we construct an action on hypergraph $C^*$-algebras in \Cref{sec:action}
and present some remaining open questions in \Cref{sec:open-questions}.

\section{Preliminaries}\label{sec:preliminaries}

\subsection{Notations}

We begin by introducing some notations and conventions, 
which will be used throughout the rest of the paper.
In the following, many results will be formulated in the language $C^*$-algebras.
In particular, we will use universal $C^*$-algebras and tensor products of $C^*$-algebras.
For more information on these topics, we refer to~\cite{blackadar06}.

Let $X$ and $Y$ be sets. 
Then we denoted with $\PP(X)$ the power set of $X$ and with 
$X \sqcup Y$ the disjoint union of $X$ and $Y$. 
Assume $X$ and $Y$ are finite. Then $\C^X$ is the $C^*$-algebra of all complex-valued functions on $X$
with a basis ${\{ e_{i} \}}_{i \in X}$ given by indicator functions.
Similarly, $\C^{X \times Y}$
is the $C^*$-algebra of all complex-valued matrices with rows indexed by $X$ and 
columns indexed by $Y$. Here, the standard matrix units $e_{ij} \in \C^{X \times Y}$ 
are given by 
\[
  {(e_{ij})}_{k\ell} := \delta_{ik} \delta_{j\ell}
  \qquad
  \forall i, k \in X,\, j, \ell \in Y.
\]

If $\A$ and $\B$ are any $C^*$-algebras, then we denote with $\A \otimes \B$ the minimal tensor product
of $\A$ and $\B$.
Further, we identify elements $u \in \A \otimes \C^{X \times Y}$ with 
$\A$-valued matrices ${(u_{ij})}_{i\in X, j \in Y}$ via 
\[
  u = \sum_{i \in X} \sum_{j \in Y} u_{ij} \otimes e_{ij}.
\]

If $u \in \A \otimes \C^{X \times Y}$ and $v \in \A \otimes \C^{Y \times Y}$, then their direct sum 
$u \oplus v \in \A \otimes \C^{(X \sqcup Y) \times (X \sqcup Y)}$ is given by 
\[
  {(u \oplus v)}_{ij} := \begin{cases}
    u_{ij} & \text{if $i, j \in X$,} \\
    v_{ij} & \text{if $i, j \in Y$,} \\
    0      & \text{otherwise,} \\
  \end{cases}
\]
for all $i, j \in X \sqcup Y$.

Finally, assume $\A$ is unital. Then we identify scalar matrices $u \in \C^{X \times Y}$ with the $\A$-valued matrices $1 \otimes u \in \A \otimes \C^{X \times Y}$. 

\subsection{Graphs and hypergraphs}\label{sec:prelim-graphs-hypergraphs}
Graphs are combinatorial objects consisting of a set of vertices which 
are connected by edges.
In the following, we begin with the definition of simple and directed graphs before we come 
to multigraphs and directed hypergraphs as in~\cite{gallo93}.

\begin{definition}\label{def:simple-graph}
  A \textit{simple graph} $\Gamma := (V, E)$ is given by a finite set of vertices $V$ and 
  a set of edges $E \subseteq \PP(V)$ with $\abs{e} = 2$ for all $e \in E$.
\end{definition}

\begin{definition}\label{def:directed-graph}
  A \textit{directed graph} $\Gamma := (V, E)$ is given by a finite set of vertices $V$ and 
  a set of edges $E \subseteq V \times V$.
\end{definition}

An edge $\{v, w\}$ in a simple graph can be visualized by a line from $v$ to $w$, 
whereas an edge $(v, w)$ in a directed graph can be visualized by an 
arrow from $v$ to $w$. Further, directed graphs can have self-loops $(v, v)$, which are excluded 
in our definition of simple graphs.

Although edges are modeled differently in both definitions, we 
can describe them uniformly by an adjacency matrix.

\begin{definition}\label{def:adj-matrix}
Let $\Gamma:= (V, E)$ be a simple or directed graph. 
Then two vertices $v, w \in V$ are \textit{adjacent} and we write 
$v \sim w$ if $\{v, w\} \in E$ or $(v, w) \in E$ respectively. 
Further, we define the \textit{adjaceny matrix} $\Adj \in \C^{V \times V}$ by
\[
  {(\Adj)}_{vw} := \begin{cases}
    1 & \text{if $v \sim w$,} \\
    0 & \text{otherwise,}
  \end{cases}
  \qquad 
  \forall v, w \in V.
\]
\end{definition}

By allowing multiple edges between each pair of vertices, 
we can generalize directed graphs to multigraphs.

\begin{definition}\label{def:multi-graph}
A (directed) \textit{multigraph} $\Gamma := (V, E)$ is given by a finite
set of vertices $V$, a finite set of edges $E$ and
two maps
\[
  s \colon E \to V, \qquad r \colon E \to V.
\] 
\end{definition}

In a multigraph, an edge $e \in E$ can be depicted by a directed arrow from 
the source vertex $s(e)$ to the range vertex $r(e)$ as in the case of directed graphs.
Further, in the setting of multigraphs, we will be interested in vertices with only incoming or outgoing edges.

\begin{definition}
  Let $\Gamma := (V, E)$ be a multigraph and $v \in V$. Then
  \begin{enumerate}
    \item $v$ is a \textit{source}, if $v \neq r(e)$ for all $e \in E$.
    \item $v$ is a \textit{sink}, if $v \neq s(e)$ for all $e \in E$.
    \item $v$ is \textit{isolated}, if $v$ is a source and a sink.
  \end{enumerate} 
\end{definition}

By replacing the vertices $s(e)$
and $r(e)$ in a multigraph with arbitrary subsets of vertices, we finally arrive at the the
definition of a hypergraph.

\begin{definition}\label{def:hypergraph}
A (directed) \textit{hypergraph} $\Gamma := (V, E)$ is given by a finite
set of vertices $V$, a finite set of edges $E$ and
source and range maps 
\[
  s \colon E \to \PP(V), \qquad r \colon E \to \PP(V).
\] 
\end{definition}

In a hypergraph, an edge $e \in E$ can be depicted by an arrow from the 
set of source vertices $s(e)$ to the set of range vertices $r(e)$.
Thus, classical directed edges correspond to hyperedge with $\abs{s(e)} = \abs{r(e)} = 1$,
see also \Cref{def:dir-graph-to-hypergraph} and \Cref{def:multigraph-to-hypergraph} below.
Note that we allow empty sets in both the source and range map.

Similar to the adjacency matrix of classical graphs, it is also possible to describe the 
edge structure of a hypergraph using so-called incidence matrices.

\begin{definition}
Let $\Gamma:= (V, E)$ be a hypergraph. 
Then its \textit{incidence matrices} $A_s, A_r \in \C^{V \times E}$ are given by
\[
  {(A_s)}_{ve} = \begin{cases}
    1 & \text{if $v \in s(e)$,} \\
    0 & \text{otherwise,}
  \end{cases}
  \qquad
  {(A_r)}_{ve} = \begin{cases}
    1 & \text{if $v \in r(e)$,} \\
    0 & \text{otherwise,}
  \end{cases}
\]
for all $v \in V$ and $e \in E$.
\end{definition}

It is also possible to generalize the notation of source and sink from multigraphs
to hypergraphs.

\begin{definition}
  Let $\Gamma := (V, E)$ be a hypergraph and $v \in V$. Then
  \begin{enumerate}
    \item $v$ is a \textit{source}, if $v \notin r(e)$ for all $e \in E$.
    \item $v$ is a \textit{sink}, if $v \notin s(e)$ for all $e \in E$.
    \item $v$ is \textit{isolated}, if $v$ is a source and a sink.
  \end{enumerate} 
\end{definition}

Further, the following properties will be used to describe 
hypergraphs with special structure.
 
\begin{definition} Let $\Gamma := (V, E)$ be a hypergraph. Then
\begin{enumerate}
\item $\Gamma$ has \textit{no multi-edges}, if $s(e_1) = s(e_2)$ and $r(e_1) = r(e_2)$ implies $e_1 = e_2$
  for all $e_1, e_2 \in E$.
\item $\Gamma$ is \textit{undirected}, if $s(e) = r(e)$ for all $e \in E$.
\item $\Gamma$ is \textit{$k$-uniform}, if $|s(e)| = |r(e)| = k$ for all $e \in E$.
\end{enumerate}
\end{definition}

Given a simple graph, a directed graph 
or a multi-graph, it is possible to regard it as a hypergraph in a natural way. 
In the following, we define the corresponding source and range maps and show how the resulting hypergraphs 
can be characterized using the properties defined previously.

\begin{definition}\label{def:simple-graph-to-hypergraph}
Let $\Gamma:= (V, E)$ be a simple graph. 
Then we can regard $\Gamma$ as a hypergraph with the 
source and range maps
\[
  s(\{v, w\}) := \{ v, w \},
  \qquad 
  r(\{v, w\}) := \{ v, w \}
  \qquad 
  \forall \{v, w\} \in E.
\]
Conversely, $2$-uniform undirected hypergraphs without multi-edges correspond exactly to simple graphs in this way.
\end{definition} 

\begin{definition}\label{def:dir-graph-to-hypergraph}
Let $\Gamma:= (V, E)$ be a directed graph. 
Then we can regard $\Gamma$ as a hypergraph by defining the source and range maps
\[
  s(v, w) := \{ v \}, 
  \qquad 
  r(v, w) := \{ w \}
  \qquad 
  \forall (v, w) \in E.
\]
Conversely, $1$-uniform hypergraphs without multi-edges correspond exactly to directed graphs in this way.
\end{definition} 

\begin{definition}\label{def:multigraph-to-hypergraph}
Let $\Gamma := (V, E)$ be a multigraph with source
and range maps
\[
  s' \colon E \to V, 
  \qquad 
  r' \colon E \to V.
\]
Then we can regard $\Gamma$ as a hypergraph with the 
new source and range maps
\[
  s(e) := \{s'(e)\},
  \qquad
  r(e)  := \{r'(e)\}
  \qquad 
  \forall e \in E.
\]
Conversely, $1$-uniform hypergraphs correspond exactly to multigraphs in this way.
\end{definition} 

Given any hypergraph, it is always possible to obtain a new hypergraph
by interchanging the source and range map or by interchanging the vertices and edges.
  
\begin{definition}\label{def:opposite-hypergraph}
  Let $\Gamma = (V, E)$ be a hypergraph. Then its \textit{opposite hypergraph}
  is given by $\Gamma^{\op} := (V, E)$ with 
  \[
    s^{\op}(e) := r(e), \quad r^{\op}(e) := s(e) \quad \forall e \in E.
  \]
\end{definition}

\begin{definition}\label{def:dual-hypergraph}
Let $\Gamma = (V, E)$ be a hypergraph. Then its \textit{dual hypergraph} is given by 
$\Gamma^* := (E, V)$ with source and range maps $s^*$ and $r^*$ defined by
\[
  s^*(v) := \{ e \in E \ | \ v \in s(e) \}, \quad r^*(v) := \{ e \in E \ | \ v \in r(e) \}.
\]
\end{definition}

Note that we can use the dual source and range maps $s^*$ and $r^*$ to rewrite 
the source and range maps $s$ and $r$ as
\[
  s(e) = \{ v \in V \ | \ e \in s^*(v) \}, \quad
  r(e) = \{ v \in V \ | \ e \in r^*(v) \} \quad \forall e \in E.
\]
In particular, we have ${(\Gamma^*)}^* = \Gamma$.

\subsection{Compact matrix quantum groups}
Compact quantum groups were first introduced by Woronowicz in~\cite{woronowicz87, woronowicz91}
and are a generalization of classical compact groups to describe
symmetries in the setting of $C^*$-algebras.

In the following, we will focus only on compact matrix quantum groups,
which form a subclass of compact quantum groups analogous to classical matrix groups.
We begin with some basic definitions before we 
come to actions on $C^*$-algebras and the quantum permutation group. 
For more information on compact quantum groups, we refer to~\cite{neshveyev13} and~\cite{timmermann08}.

\begin{definition}
Let $I$ be a finite index set. 
A \textit{compact matrix quantum group} $G := (\A, u)$ consists of a
unital $C^*$-algebra $\A$ and a matrix $u := {(u_{ij})}_{i,j \in I} \in \A \otimes \C^{I \times I}$, such that 
\begin{enumerate}
\item $\A$ is generated by the matrix entries $u_{ij}$ for all $i, j \in I$,
\item $u$ is unitary and $\overline{u} := {(u_{ij}^*)}_{i,j\in I}$ is invertible,
\item there exists a unital $*$-homomorphism $\Delta \colon \A \to \A \otimes \A$ with 
\[
    \Delta(u_{ij}) = \sum_{k \in I} u_{ik} \otimes u_{kj} \quad \forall  i, j \in I.
\]
\end{enumerate}
If $G$ is a compact matrix quantum group, then the $C^*$-algebra $\A$ will be denoted by $C(G)$
and the $*$-algebra generated by the entries $u_{ij}$ will be denoted by $\OO(G)$.
Further, the matrix $u$ is called the fundamental representation of $G$.
\end{definition}

As for classical groups, it is possible to define
subgroups, quotients and isomorphisms for compact matrix quantum groups.
However, to formulate these, it is convenient to first 
introduce morphisms of compact quantum groups.

\begin{definition}
Let $G$ and $H$ be two compact matrix quantum groups.
A \textit{morphism of compact quantum groups} is a unital $*$-homomorphism 
\[
  \varphi \colon C(H) \to C(G),
\]
such that 
\[
  (\varphi \otimes \varphi) \circ \Delta_H = \Delta_G \circ \varphi.
\]
\end{definition}

Note that there exists also a stronger notion of morphism between
compact matrix quantum groups which respects their fundamental representations. However,
we are interested in morphisms of compact quantum groups since we want to be able to compare 
compact matrix quantum groups of different sizes.

\begin{definition}
Let $G$ and $H$ be two compact matrix quantum groups.
\begin{enumerate}
\item $H$ is a \textit{subgroup} of $G$ and we write $H \subseteq G$ if there exists a surjective 
morphism of compact quantum groups $\varphi \colon C(G) \to C(H)$.
\item $H$ is a \textit{quotient} of $G$ if there exists an injective morphism of compact quantum groups
$\varphi \colon C(H) \to C(G)$. In this case, we identify $C(H)$ with a subalgebra of $C(G)$.
\item $G$ and $H$ are \textit{isomorphic} and we write $G = H$ if there exists a bijective
morphism of compact quantum groups $\varphi \colon C(G) \to C(H)$.
Note that in this case, $\varphi^{-1}$ is again a morphism of compact quantum groups.
\end{enumerate} 
\end{definition}

While the comultiplication $\Delta$ of a quantum group generalizes the multiplication of 
a classical group, there exists further structure on 
the $*$-algebra $\OO(G)$ which corresponds to the neutral element
and the inverse map in the classical case.

\begin{remark}\label{rem:counit-antipode}
Let $G$ be a compact matrix quantum group with fundamental representation $u$.
Then there exists a unital $*$-homomorphism $\varepsilon \colon \OO(G) \to \C$ 
called \textit{counit} and a unital anti-homomorphism $S \colon \OO(G) \to \OO(G)$
called \textit{antipode}, which are given by
\[
  \varepsilon(u_{ij}) = \delta_{ij},
  \qquad 
  S(u_{ij}) = u_{ji}^*
  \qquad 
  \forall i, j \in I.
\]
These turn $\OO(G)$ into a Hopf $*$-algebra. For more details, see~\cite{timmermann08}.
\end{remark}

Using the Hopf $*$-algebra structure on $\OO(G)$, we can now define 
actions of compact matrix quantum groups as in~\cite{wang98} and~\cite{bichon03}.

\begin{definition}\label{def:qgroup-action}
Let $G$ be a compact matrix quantum group and $\A$ a unital $C^*$-algebra.
An \textit{action} of $G$ on $\A$ is a unital $*$-homomorphism
$\alpha \colon \A \to \A \otimes C(G)$, such that 
\begin{enumerate}
  \item $(\alpha \otimes \id) \circ \alpha = (\id \otimes \Delta) \circ \alpha$,
  \item there exists a dense $*$-subalgebra $\B \subseteq \A$ with $\alpha(\B) \subseteq \B \otimes \OO(G)$,
  \item $(\id \otimes \varepsilon) \circ \alpha|_\B = \id$.
\end{enumerate}
\end{definition}

Alternatively, the second and third conditions can be replaced by the more 
analytic condition that $(1 \otimes C(G)) \alpha(\A)$ is linearly dense in 
$\A \otimes C(G)$, see for example~\cite{schmidt18}. 
However, \Cref{def:qgroup-action} will be easier to check in our case.

\begin{definition}\label{def:qgroup-faithful}
  Let $\alpha$ be an action of a compact matrix quantum groups $G$ 
  on a $C^*$-algebra $\A$. Then $\alpha$ is \textit{faithful} if 
  for any quotient $H$ of $G$, such that $\alpha|_{C(H)}$ is an action on $\A$, we have
  $C(H) = C(G)$.
\end{definition}
  
Next, we define magic unitaries and the quantum permutation group $S_n^+$, which was first introduced
by Wang~\cite{wang98} as the quantum automorphism group of the finite set $X := \{1, \ldots, n\}$. 
However, we will consider the case of arbitrary finite sets $X$ and denote the 
corresponding quantum permutation group by $S_X^+$. 

\begin{definition}
Let $X$ be a finite set and $\A$ a unital $C^*$-algebra.
An element $u:= (u_{ij}) \in \A \otimes \C^{X \times X}$ is called \textit{magic unitary}, if
\[
  u_{ij}^2 = u_{ij}^* = u_{ij} \quad \forall i, j \in X,
  \qquad
  \sum_{j \in I} u_{ij} = \sum_{j \in I} u_{ji} = 1 \quad \forall i \in X.
\]
\end{definition}

Note that magic unitaries with entries in $\C$ are exactly classical permutation matrices, which 
gives magic unitaries also the name quantum permutation matrices.
These matrices have applications in quantum information via non-local games~\cite{lupini20} 
and concrete magic unitaries have for example been recently studied in~\cite{faross23, nechita23}.
For more information and open problems related to magic unitaries, we refer to~\cite{weber23}.

The quantum permutation group $S_X^+$ is then the compact matrix quantum group with 
a universal magic unitary as its fundamental representation.

\begin{definition}
Let $X$ be a finite set and $\A$ be the universal unital $C^*$-algebra with 
generators $u_{ij}$ for all $i, j \in X$ such that $u := {(u_{ij})}_{i,j\in X}$ is a magic unitary.
Then $S_X^+ := (\A, u)$ is the \textit{quantum permutation group} on $X$.
\end{definition}

In~\cite{wang98}, Wang showed that the quantum permutation group $S_X^+$ is the largest compact matrix quantum 
group which acts on $\C^X$ in the following sense. Thus, we can think of $S_X^+$ as the 
quantum automorphism group of a finite set $X$.

\begin{proposition}[\cite{wang98}]
Let $X$ be a finite set. Then $S_X^+$ acts faithfully on $\C^X$
via $\alpha \colon \C^X \to \C^X \otimes C(S_X^+)$ given by
\[
  \alpha(e_j) = \sum_{j \in X} e_{i} \otimes u_{ij} \quad \forall i \in X,
\]
where $u$ denotes the fundamental representation of $S_X^+$ and ${\{ e_i \}}_{i \in X}$ 
the standard basis of $\C^X$. 
Further, if $G$ is any compact
matrix quantum group acting faithfully on $\C^X$ via the action $\alpha$, then $G \subseteq S_X^+$.
\end{proposition}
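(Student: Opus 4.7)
The proposition has two parts — that $\alpha$ defines a faithful action of $S_X^+$ on $\C^X$, and that this action is universal — which I would handle in turn.

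For the existence part, since $\C^X$ is the universal unital $*$-algebra on self-adjoint idempotents $\{e_i\}_{i \in X}$ with $e_i e_j = \delta_{ij} e_i$ and $\sum_i e_i = 1$, it suffices to verify that the images $\alpha(e_i) = \sum_k e_k \otimes u_{ki}$ satisfy these same relations in $\C^X \otimes C(S_X^+)$. Self-adjointness is immediate from $u_{ki}^* = u_{ki}$; the magic unitary relations force $u_{ki} u_{kj} = \delta_{ij} u_{ki}$ (rows of $u$ are pairwise orthogonal projections summing to $1$), giving $\alpha(e_i)\alpha(e_j) = \delta_{ij}\alpha(e_i)$; and $\sum_i \alpha(e_i) = 1$ follows from the column sums of $u$. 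The coaction axiom $(\alpha \otimes \id)\alpha = (\id \otimes \Delta)\alpha$ reduces after evaluation on $e_j$ to the defining formula $\Delta(u_{ij}) = \sum_k u_{ik} \otimes u_{kj}$, and the counit condition holds on $\B := \C^X$ since $\varepsilon(u_{ij}) = \delta_{ij}$. For faithfulness, observe that the $u_{ij}$ generate $C(S_X^+)$, so any quotient $H$ through which $\alpha$ factors must satisfy $C(H) = C(S_X^+)$.

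For the universality part, suppose $G$ is a compact matrix quantum group acting faithfully on $\C^X$, and expand $\alpha(e_j) = \sum_i e_i \otimes v_{ij}$ for some $v_{ij} \in C(G)$. Applying $\alpha$ to the defining relations of $\C^X$ yields $v_{ij}^* = v_{ij} = v_{ij}^2$, the row orthogonality $v_{ij} v_{ij'} = 0$ for $j \neq j'$, and the row sums $\sum_j v_{ij} = 1$. The coaction axiom forces $\Delta(v_{ij}) = \sum_k v_{ik} \otimes v_{kj}$ and the counit condition forces $\varepsilon(v_{ij}) = \delta_{ij}$, so $v = (v_{ij})$ is a corepresentation of $G$. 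The remaining column relations $\sum_i v_{ij} = 1$ and $v_{ij} v_{i'j} = 0$ for $i \neq i'$ I would derive from the antipode identities $\sum_k S(v_{ik}) v_{kj} = \sum_k v_{ik} S(v_{kj}) = \delta_{ij}$ in $\OO(G)$, using the self-adjointness of the $v_{ij}$ to upgrade $v$ to a unitary corepresentation and thereby to a magic unitary. Once $v$ is a magic unitary, the universal property of $C(S_X^+)$ produces a unital $*$-homomorphism $\pi \colon C(S_X^+) \to C(G)$ with $\pi(u_{ij}) = v_{ij}$; this $\pi$ intertwines the comultiplications by the already-established corepresentation identity and thus is a morphism of compact matrix quantum groups, while faithfulness of the action on $G$ forces $\pi$ to be surjective, yielding $G \subseteq S_X^+$.

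The main obstacle is the derivation of the column relations on the $v_{ij}$, since applying $\alpha$ directly to the relations of $\C^X$ supplies only the row half of the magic unitary conditions. Extracting the column half requires invoking the full Hopf $*$-algebra structure on $\OO(G)$ — specifically the antipode and its interaction with the $*$-operation — to show that the self-adjoint corepresentation $v$ is unitary. This is the one step that goes beyond routine computation with the universal relations of $\C^X$; the rest of the argument is a direct verification of axioms followed by invocation of the universal property of $C(S_X^+)$.
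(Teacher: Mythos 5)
First, a remark on scope: the paper does not prove this proposition at all --- it is imported from Wang's article \cite{wang98} --- so there is no in-paper argument to compare against. Your first part (that $\alpha$ is a well-defined faithful action of $S_X^+$ on $\C^X$) is correct and routine, up to a harmless row/column mix-up in identifying which sums of the magic unitary give $\sum_i \alpha(e_i) = 1$.

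The gap is in the universality part, precisely at the step you flag. Applying $\alpha$ to the relations of $\C^X$ gives that each $v_{ij}$ is a self-adjoint projection and that each \emph{row} of $v$ is a partition of unity; consequently $v^* v = \mathrm{diag}(c_1,\dots,c_n)$ with $c_j := \sum_i v_{ij}$, and the missing column relations amount exactly to $c_j = 1$. Your proposed derivation --- that the antipode identities $\sum_k S(v_{ik})\, v_{kj} = \sum_k v_{ik}\, S(v_{kj}) = \delta_{ij}$ together with self-adjointness of the entries ``upgrade $v$ to a unitary corepresentation'' --- is not a valid deduction. The formula $S(v_{ij}) = v_{ji}^*$ is \emph{equivalent} to unitarity of the corepresentation $v$ (see \Cref{rem:counit-antipode}, where it is stated only for the fundamental representation, which is unitary by definition); it does not follow from the entries being self-adjoint. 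What the antipode identities actually give is invertibility, $v^{-1} = (S(v_{ij}))_{ij}$, and combining this with $v^*v = \mathrm{diag}(c_j)$ only yields $S(v_{ij}) = c_i^{-1} v_{ji}$ --- one is still left having to prove $c_i = 1$. That is the genuine content of Wang's Theorem 3.1, and it requires further input: Wang's argument goes through Woronowicz's corepresentation theory (one-sided invertibility of a corepresentation implying two-sided invertibility, resp.\ unitarizability via the Haar state); the closest analogue inside this paper, \Cref{lem:GH-uV-magic}, likewise invokes \cite[Proposition 3.2]{woronowicz87} after first establishing one-sided invertibility. As written, your sketch asserts the conclusion of the hard step rather than proving it. The surrounding structure --- universal property of $C(S_X^+)$, compatibility of $\pi$ with $\Delta$, surjectivity from faithfulness --- is fine once $v$ is known to be a magic unitary.
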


When defining quantum automorphism groups of graphs or hypergraphs, we 
will consider intertwiner relations of the form $Au = vA$, where 
$u$ and $v$ are unitary matrices over a $C^*$-algebra and $A$ is a scalar matrix.
The following proposition gives a useful reformulation of these relations.

\begin{proposition}\label{prop:star-intertwiner}
  Let $I, J$ be finite index sets and $\A$ a unital $C^*$-algebra. Further, let $u \in \A \otimes \C^{I \times I}$, $v \in \A \otimes \C^{J \times J}$
  be unitaries and $T \in \C^{J \times I}$.
  Then $Tu = vT$ if and only if $T^* v = u T^*$.
\end{proposition}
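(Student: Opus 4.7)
The plan is to derive the second identity from the first by taking adjoints and then using unitarity of $u$ and $v$ to move factors around; the converse direction will follow by the same argument with the roles of $T$ and $T^*$ swapped.

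First I would assume $Tu = vT$ and take the adjoint of both sides in $\A \otimes \C^{I \times J}$ (noting that adjoints of scalar matrices $T$ are formed entry-wise and compatible with the $\A$-matrix product in the obvious way). This yields $u^* T^* = T^* v^*$. Next I would left-multiply this equation by $u$. Since $u$ is unitary we have $u u^* = 1_{\A \otimes \C^{I \times I}}$, so the left side collapses to $T^*$ and I obtain $T^* = u T^* v^*$. Finally, right-multiplying by $v$ and using $v^* v = 1$ gives $T^* v = u T^*$, which is the desired identity.

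For the reverse direction, I would run exactly the same three steps starting from $T^* v = u T^*$: take adjoints to get $v^* T = T u^*$, left-multiply by $v$ to clear $v^*$ and get $T = v T u^*$, then right-multiply by $u$ to obtain $Tu = vT$. Alternatively, one can note that the statement is symmetric in the pair $(T, u, v)$ and $(T^*, v, u)$, so the converse is the same implication applied to $T^*$ with $u$ and $v$ interchanged.

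There is no real obstacle here; the only subtlety worth flagging is that the manipulations take place in tensor products $\A \otimes \C^{I \times J}$ etc., so one should be careful that the scalar matrix $T$ is identified with $1 \otimes T$ (as already stipulated in the Notations subsection) and that multiplication, involution, and unitarity of $u$ and $v$ work block-wise in the standard way. With these identifications, all of the above rearrangements are purely formal and hold in any unital $*$-algebra with unitary $u$ and $v$.
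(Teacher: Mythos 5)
Your argument is correct and is essentially identical to the paper's proof: take adjoints of $Tu = vT$ (using that $T$ has scalar entries) to get $u^*T^* = T^*v^*$, then multiply by $u$ on the left and $v$ on the right, with the converse obtained by swapping the roles of $T$ and $T^*$. No issues.
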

\begin{proof}
Let $Tu = vT$. Since $T$ has scalar entries, we have 
\[
  u^*T^* = {(Tu)}^* = {(vT)}^* = T^* v^*.
\] 
Multiplying with $u$ from the left and with $v$ from the right 
yields $T^* v = u T^*$. The other direction follows similarly by replacing $T$ with $T^*$.
\end{proof}
  
Further, we will use the free product of compact matrix quantum groups, which 
was introduced in~\cite{wang95}.

\begin{definition}
Let $G := (\A, u)$ and $H := (\B, v)$ be two compact matrix quantum groups. Then 
the \textit{free product}  $G * H$ is the compact matrix quantum group 
\[
  G * H := (\A * \B, u \oplus v),
\]
where $\A * \B$ denotes the unital free product of $C^*$-algebras and 
we identify $u$ and $v$ with $\A * \B$-valued matrices via the canonical 
inclusions $\A, \B \hookrightarrow \A * \B$.
\end{definition}

\begin{example}
Consider two finite sets $X$ and $Y$ with corresponding
quantum permutations groups $S_X^+$ and $S_Y^+$. Then their free product $S_X^+ * S_Y^+$ is given by 
the universal unital $C^*$-algebra
\[
  C(S_X^+ * S_Y^+) = C^*( u_{i j}, v_{k \ell} \ | \ \text{$(u_{i j})$ and $(v_{k \ell})$ are magic unitaries}),
\]
where the elements $u_{i j}$ are indexed over $X$ and the elements $v_{k \ell}$ are indexed over $Y$. 
Further, the comultiplication is defined by 
\begin{align*}
  \Delta(u_{i j}) &= \sum_{k \in X} u_{ik} \otimes u_{kj}
  \quad
  \forall i, j \in X, \\ 
  \Delta(v_{i j}) &= \sum_{k \in Y} v_{ik} \otimes v_{kj}
  \quad
  \forall i, j \in Y.
\end{align*}
\end{example}

\subsection{Quantum automorphism groups of graphs}
In the case of graphs, there 
exist two different versions of quantum automorphism groups, which 
have been introduced by Bichon~\cite{bichon03} and Banica~\cite{banica05}. 
These quantum groups have for example been further studied in~\cite{chassaniol19, schmidt20, levandovskyy22, bruyn23}
and have been recently generalized to multigraphs in~\cite{goswami23}.
In the following, we begin with the definition of the quantum automorphism groups
of Bichon, before we come to the version of Banica and the recent version for multigraphs
by Goswami-Hossain.
For more information about quantum automorphism groups of graphs, 
we refer to~\cite{schmidt20}.

\begin{definition}\label{def:bichon-qaut}
Let $\Gamma := (V, E)$ be a simple or directed graph and denote with $\A$ the universal
unital $C^*$-algebra with generators $u_{vw}$ for all $v, w \in V$ and 
the following relations:
\begin{enumerate}
\item\label{item:bichon-qaut-1} $u := {(u_{vw})}_{v,w\in V}$ is a magic unitary,
\item\label{item:bichon-qaut-2} $\Adj u = u \Adj$, where $\Adj$ is the adjaceny matrix of $\Gamma$,
\item\label{item:bichon-qaut-3}
$\displaystyle u_{ik} u_{j\ell} = u_{j\ell} u_{ik}$
for all $i \sim j$ and $k \sim \ell$.
\end{enumerate}
Then  $\AutBichon(\Gamma) := (\A, u)$ is \textit{Bichon's quantum automorphism group} of $\Gamma$.
\end{definition}

Note that the second relation $\Adj u = u \Adj$ was originally formulated using a different set of relations. 
However, the following proposition shows that both versions are equivalent.
 
\begin{proposition}\label{prop:bic-intertwiner-equiv}
Let $\Gamma := (V, E)$ be a simple or directed graph, $\A$ a unital
$C^*$-algebra and $u \in \A \otimes \C^{V \times V}$ a magic unitary. Then 
$\Adj u = u \Adj$ is equivalent to 
\begin{align*}
  & u_{ik} u_{j\ell} = u_{j\ell} u_{ik} = 0
  \quad \forall i \sim j, \, k \nsim \ell, \\
  & u_{ik} u_{j\ell} = u_{j\ell} u_{ik} = 0
  \quad \forall i \nsim j, \, k \sim \ell.
\end{align*}
\end{proposition}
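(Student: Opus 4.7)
The plan is to reformulate the linear identity $\Adj u = u\Adj$ as the equivalent identity $u\Adj u^* = \Adj$ (using that $u$ is unitary), and then translate it entrywise. Since each $u_{ab}$ is a self-adjoint projection, we have $(u^*)_{sj} = u_{js}$, so
\[
  (u\Adj u^*)_{ij} = \sum_{r, s \in V} u_{ir} \Adj_{rs} u_{js} = \sum_{r \sim s} u_{ir} u_{js}.
\]
Thus $\Adj u = u\Adj$ is equivalent to $\sum_{r \sim s} u_{ir} u_{js} = \Adj_{ij}$ for all $i, j \in V$. Both directions of the proposition will come from comparing this scalar identity against the quadratic relations.

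For the forward direction, I would split cases on the value $\Adj_{ij} \in \{0, 1\}$. If $i \nsim j$, then $\sum_{r \sim s} u_{ir} u_{js} = 0$; fixing any $k \sim \ell$ and multiplying on the left by $u_{ik}$ and on the right by $u_{j\ell}$, row orthogonality $u_{ik} u_{ir} = \delta_{kr} u_{ik}$ and column orthogonality $u_{js} u_{j\ell} = \delta_{s\ell} u_{j\ell}$ of the magic unitary collapse the sum to the single term $u_{ik} u_{j\ell}$, giving $u_{ik} u_{j\ell} = 0$. If $i \sim j$, then $\sum_{r \sim s} u_{ir} u_{js} = 1$; combined with
\[
  \sum_{r, s} u_{ir} u_{js} = \Bigl(\sum_r u_{ir}\Bigr)\Bigl(\sum_s u_{js}\Bigr) = 1,
\]
this yields $\sum_{r \nsim s} u_{ir} u_{js} = 0$, and the same extraction argument for $k \nsim \ell$ produces $u_{ik} u_{j\ell} = 0$. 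The companion relations $u_{j\ell} u_{ik} = 0$ then follow by taking adjoints, since the entries of $u$ are self-adjoint.

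For the converse, I would assume the zero relations, which say exactly that $u_{ir} u_{js} = 0$ whenever $\Adj_{ij} \neq \Adj_{rs}$. Hence
\[
  1 = \sum_{r, s} u_{ir} u_{js} = \sum_{r, s\, :\, \Adj_{rs} = \Adj_{ij}} u_{ir} u_{js}.
\]
A short case distinction on $\Adj_{ij}$ then gives $\sum_{r \sim s} u_{ir} u_{js} = \Adj_{ij}$ in both cases, i.e.\ $u\Adj u^* = \Adj$. Multiplying by $u$ on the right recovers $\Adj u = u\Adj$.

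The main technical point is the bookkeeping in the forward extraction, particularly for directed graphs where $\Adj$ need not be symmetric: a naive comparison of $(\Adj u)_{ij} = \sum_{k\, :\, i \sim k} u_{kj}$ with $(u\Adj)_{ij} = \sum_{k\, :\, k \sim j} u_{ik}$ mixes two different notions of adjacency, while passing through $u\Adj u^*$ symmetrises both into the uniform shape $\sum_{r \sim s} u_{ir} u_{js}$ that interacts cleanly with the magic-unitary orthogonality.
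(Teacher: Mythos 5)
Your proof is correct. Note that the paper does not actually prove this proposition; it only cites \cite[Proposition 2.1.3]{schmidt20}, so you are supplying the argument, and what you give is essentially the standard one. The reformulation of $\Adj u = u\Adj$ as $u\Adj u^* = \Adj$ is legitimate (a magic unitary is unitary, since the entries of each row and of each column are mutually orthogonal projections summing to $1$), and it is a genuinely nice touch: it puts the hypothesis into the single uniform identity $\sum_{r\sim s}u_{ir}u_{js} = (\Adj)_{ij}$, which sidesteps exactly the asymmetry problem you point out for directed graphs. The extraction step is sound --- multiplying by $u_{ik}$ on the left and $u_{j\ell}$ on the right collapses the sum to the lone term $u_{ik}u_{j\ell}$ because $k\sim\ell$ (resp.\ $k\nsim\ell$) guarantees that the pair $(k,\ell)$ lies in the index set --- and the companion relations $u_{j\ell}u_{ik}=0$ do follow by taking adjoints of self-adjoint elements. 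The converse direction is also complete. One cosmetic slip: the identity $u_{js}u_{j\ell}=\delta_{s\ell}u_{j\ell}$ is again orthogonality of distinct entries within row $j$, not ``column orthogonality''; the formula you invoke is nevertheless the correct one.
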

\begin{proof}
See~\cite[Proposition 2.1.3.]{schmidt20}.
\end{proof}

Using this reformulation of the intertwiner relations $\Adj u = u \Adj$, 
we can now also prove the following proposition, which will be useful later.

\begin{proposition}\label{prop:bic-sum-one}
Let $\Gamma:= (V, E)$ be a simple or directed graph and denote
with $u$ the fundamental representation of $\AutBichon(\Gamma)$.
Then
  \[
    \sum_{\substack{k, \ell \in V \\ k \sim \ell}} u_{i k} u_{j \ell} = \sum_{\substack{k, \ell \in V \\ k \sim \ell}} u_{k i} u_{\ell j} = 1 \quad \forall i \sim j.
  \]
\end{proposition}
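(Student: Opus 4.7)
The plan is to exploit the fact that many of the ``off-diagonal'' products in the full double sum $\sum_{k,\ell \in V} u_{ik} u_{j\ell}$ already vanish when $i \sim j$, thanks to the intertwiner relations characterized in \Cref{prop:bic-intertwiner-equiv}. Since $u$ is a magic unitary, its row and column sums equal $1$, so the full unrestricted double sum collapses to $1 \cdot 1 = 1$, and we only have to check that the missing terms (those with $k \nsim \ell$) contribute zero.

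More precisely, I would first fix $i \sim j$ and write
\[
  1 = \Bigl(\sum_{k \in V} u_{ik}\Bigr) \Bigl(\sum_{\ell \in V} u_{j\ell}\Bigr)
  = \sum_{k, \ell \in V} u_{ik} u_{j\ell}
  = \sum_{\substack{k, \ell \in V \\ k \sim \ell}} u_{ik} u_{j\ell}
  + \sum_{\substack{k, \ell \in V \\ k \nsim \ell}} u_{ik} u_{j\ell}.
\]
Then I would invoke \Cref{prop:bic-intertwiner-equiv}: since $i \sim j$ and $k \nsim \ell$, each summand $u_{ik} u_{j\ell}$ in the second sum is zero. This gives the first equality.

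For the second equality, I would run the analogous computation using column sums of the magic unitary,
\[
  1 = \Bigl(\sum_{k \in V} u_{ki}\Bigr) \Bigl(\sum_{\ell \in V} u_{\ell j}\Bigr)
  = \sum_{k, \ell \in V} u_{ki} u_{\ell j},
\]
and again use \Cref{prop:bic-intertwiner-equiv} — this time in the form ``$u_{pq} u_{rs} = 0$ whenever $p \nsim r$ and $q \sim s$'' (renaming $p = k$, $q = i$, $r = \ell$, $s = j$) — to kill all terms with $k \nsim \ell$ under the hypothesis $i \sim j$.

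There is no real obstacle here; the only subtlety is matching the two directions of the intertwiner relation in \Cref{prop:bic-intertwiner-equiv} to the two sums, one using the ``$i \sim j$, $k \nsim \ell$'' case and the other the ``$k \nsim \ell$, $i \sim j$'' case with the indices of $u$ swapped between rows and columns.
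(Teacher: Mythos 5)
Your proof is correct and is essentially identical to the paper's: both reduce the restricted sum to the full double sum by killing the $k \nsim \ell$ terms via \Cref{prop:bic-intertwiner-equiv} (in its two directions) and then factor the full sum into row or column sums of the magic unitary, each equal to $1$. No issues.
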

\begin{proof}
Using the \Cref{prop:bic-intertwiner-equiv} and the fact that $u$ is a magic 
unitary, we obtain
\[
  \sum_{\substack{k, \ell \in V \\ k \sim \ell}} u_{i k} u_{j \ell}
  =
  \sum_{k, \ell \in V} u_{i k} u_{j \ell}
  =
  \bigg( \sum_{k \in V} u_{i k} \bigg) 
  \bigg( \sum_{\ell \in V} u_{j \ell} \bigg)
  = 1 \cdot 1 = 1
\]
for all $i, j \in V$ with $i \sim j$,
see also~\cite{schmidt18}. The other equality follows similarly.
\end{proof}

By dropping Relation~\ref{item:bichon-qaut-3} in \Cref{def:bichon-qaut}, we obtain 
the quantum automorphism group of Banica, which is often studied instead of Bichon's version.

\begin{definition}
Let $\Gamma := (V, E)$ be a simple or directed graph and denote with $\A$ the universal
unital $C^*$-algebra with generators $u_{vw}$ for all $v, w \in V$ and 
the following relations:
\begin{enumerate}
\item $u := {(u_{vw})}_{v,w\in V}$ is a magic unitary,
\item $\Adj u = u \Adj$, where $\Adj$ is the adjaceny matrix of $\Gamma$,
\end{enumerate}
Then  $\AutBanica(\Gamma) := (\A, u)$ is \textit{Banica's quantum automorphism group} of $\Gamma$.
\end{definition}

However, we will mainly be interested in Bichon's version 
and its following generalization to multigraphs as 
recently defined by Goswami-Hossain~\cite{goswami23}.

\begin{definition}\label{def:qaut-GH}
Let $G:= (V, E)$ be a multigraph without isolated vertices.
Denote with $\A$ the universal unital $C^*$-algebra 
with generators $u_{ef}$ for all $e, f \in E$
and the following relations.
\begin{enumerate}
\item\label{def:qaut-GH-1} The matrix $u := {(u_{ef})}_{e,f \in E}$ is a magic unitary.
\item\label{def:qaut-GH-2} Let $v \in V$ and $e_1, e_2 \in E$. Then 
\begin{align*}
  \sum_{\substack{f \in E \\ s(f) = v}} u_{e_1 f}
  =
  \sum_{\substack{f \in E \\ s(f) = v}} u_{e_2 f}
  \quad 
  &\text{if $s(e_1) = s(e_2)$,} \\
  \sum_{\substack{f \in E \\ r(f) = v}} u_{e_1 f}
  =
  \sum_{\substack{f \in E \\ r(f) = v}} u_{e_2 f}
  \quad 
  &\text{if $r(e_1) = r(e_2)$.}
\end{align*}
\item\label{def:qaut-GH-3} Let $e, f \in E$. Then $u_{ef} = 0$ if
\begin{itemize}
  \item $s(e)$ is neither a source nor a sink and $s(f)$ is a source,
  \item $r(e)$ is neither a source nor a sink and $r(f)$ is a sink.
\end{itemize}
\item\label{def:qaut-GH-4} Let $v \in V$ be neither a source nor a sink and $e_1, e_2 \in E$ such that 
$s(e_1) = r(e_2)$ is neither a source nor a sink. Then
\[
  \sum_{\substack{f \in E \\ s(f) = v}} u_{e_1 f} = \sum_{\substack{f \in E \\ r(f) = v}} u_{e_2 f}. 
\]
\end{enumerate}
Then $\AutGHBichon(\Gamma) := (\A, u)$ is the \textit{quantum automorphism group of $\Gamma$ by
Goswami-Hossa in Bichon's sense}.
\end{definition}
  
Note that we added the magic unitary relation $u_{ef}^2 = u_{ef}$ and swapped the conditions in Relation~\ref{def:qaut-GH-3}
in contrast to the original definition in~\cite[Definition 4.26]{goswami23}.
  
\subsection{Graph and hypergraph \texorpdfstring{$C^*$}{C*}-algebras}

Graph $C^*$-algebras are a well-studied class of $C^*$-algebras which are 
defined in terms of an underlying graph and generalize Cuntz-Krieger algebras~\cite{cuntz80}. 
They include many concrete examples like
matrix algebras, continuous functions on the circle or Cuntz algebras.
In the following, we define graph $C^*$-algebras only for finite directed graphs. See~\cite{raeburn05} for more information
and the general case of infinite graphs.

\begin{definition}
Let $\Gamma:= (V, E)$ be a directed graph. The \textit{graph $C^*$-algebra}
$C^*(\Gamma)$ is the universal $C^*$-algebra generated
by mutually orthogonal projections $p_v$ for all $v \in V$ and 
partial isometries $s_e$ with orthogonal ranges for all $e \in E$ such that 
\begin{enumerate}
\item $\displaystyle s_{(v,w)}^* s_{(v,w)} = p_{w}$ for all $(v, w) \in E$,
\item $\displaystyle s_{(v,w)} s_{(v,w)}^* \leq p_{v}$ for all $(v, w) \in E$,
\item $\displaystyle p_v \leq \sum_{\substack{e \in E \\ v = s(e)}} s_e s_e^*$ for all $v \in V$ which are not a sink.
\end{enumerate}
\end{definition}

In~\cite{schmidt18}, Schmidt and Weber then
showed that Banica's quantum automorphism group 
$\AutBanica(\Gamma)$ of a finite graph $\Gamma$
acts maximally on the corresponding graph $C^*$-algebra $C^*(\Gamma)$.

\begin{proposition}[\cite{schmidt18}]
Let $\Gamma:= (V, E)$ be a directed graph. 
Then $\AutBanica(\Gamma)$ acts maximally on $C^*(\Gamma)$ via 
$\alpha \colon C^*(\Gamma) \to C^*(\Gamma) \otimes C(\AutBanica(\Gamma))$ given by
\begin{alignat*}{3}
  \alpha(p_v) &= \sum_{w \in V} u_{vw} \otimes p_{w} 
  &\qquad& \forall v \in V, \\
  \alpha(s_{(v_1, v_2)}) &= \sum_{(v_2, w_2) \in E} u_{v_1 v_2} u_{w_1 w_2} \otimes s_{(v_2, w_2)} 
  & & \forall (v_1, w_1) \in E.
\end{alignat*}
\end{proposition}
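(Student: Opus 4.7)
The plan is to construct the coaction $\alpha$ on the canonical generators of $C^*(\Gamma)$, verify that it extends to a $*$-homomorphism by universality, confirm the coaction axioms from \Cref{def:qgroup-action}, and finally establish maximality. The argument splits naturally into these three pieces.

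\textbf{Well-definedness of $\alpha$.} Write $P_v$ and $S_e$ for the elements of the target given by the stated formulas. First I would check that the $P_v$ form a family of mutually orthogonal projections, which follows from the magic-unitary relations since $P_v P_{v'} = \sum_w u_{vw} u_{v'w} \otimes p_w = \delta_{vv'} P_v$ and $P_v^* = P_v$. For the $S_e$ I would expand $S_e^* S_e$, $S_e S_e^*$, and $\sum_{s(e)=v} S_e S_e^*$ into double sums indexed by pairs of edges. The cross terms (a pair that is an edge paired with one that is not) vanish by \Cref{prop:bic-intertwiner-equiv}, leaving only ``edge--edge'' contributions which collapse via the magic-unitary column sums together with \Cref{prop:bic-sum-one} to $P_{r(e)}$, a projection $\leq P_{s(e)}$, and $P_v$ respectively. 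Universality of $C^*(\Gamma)$ then delivers $\alpha$ as a unital $*$-homomorphism.

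\textbf{Coaction axioms.} Coassociativity is verified directly on the generators using $\Delta(u_{ij}) = \sum_k u_{ik} \otimes u_{kj}$: both $(\alpha \otimes \id) \circ \alpha$ and $(\id \otimes \Delta) \circ \alpha$ produce identical multi-index sums on $p_v$ and on $s_e$. Taking $\B$ to be the dense $*$-subalgebra of $C^*(\Gamma)$ generated by $\{p_v, s_e\}$, one has $\alpha(\B) \subseteq \B \otimes \OO(\AutBanica(\Gamma))$ by inspection, and $(\id \otimes \varepsilon) \circ \alpha|_\B = \id$ because $\varepsilon(u_{ij}) = \delta_{ij}$ collapses each sum to its diagonal.

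\textbf{Maximality.} Suppose $G$ is another compact matrix quantum group with fundamental representation $v$ admitting an action $\beta$ of the stated form on $C^*(\Gamma)$. To obtain a surjection $C(\AutBanica(\Gamma)) \twoheadrightarrow C(G)$ it suffices to show that the entries $v_{ij}$ satisfy the defining relations of $\AutBanica(\Gamma)$. Applying $\beta$ to $p_v = p_v^* = p_v^2$, to $p_v p_w = 0$ for $v \neq w$, and to $\sum_v p_v = 1$, and using linear independence of $\{p_v\}$ in $C^*(\Gamma)$, forces each row of $v$ to consist of mutually orthogonal projections summing to $1$; combined with the unitarity of $v$ this gives the full magic-unitary condition. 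Next, applying $\beta$ to the Cuntz--Krieger relation $s_e^* s_e = p_{r(e)}$, expanding both sides, and comparing coefficients shows that $v_{ik} v_{j\ell} = 0$ whenever $k \sim \ell$ but $i \nsim j$ (and symmetrically), which by \Cref{prop:bic-intertwiner-equiv} is precisely $\Adj v = v \Adj$. The main obstacle is justifying the coefficient comparison: one needs a linear-independence statement for a suitable family of monomials in $\{p_v, s_e, s_e^*\}$ inside $C^*(\Gamma)$, which ultimately invokes the gauge-invariant uniqueness theorem for graph $C^*$-algebras, and this is where most of the technical effort will go.
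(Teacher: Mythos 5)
This proposition is not proved in the paper at all --- it is imported from Schmidt--Weber~\cite{schmidt18} as a known result (note also that the displayed formula for $\alpha(s_{(v_1,w_1)})$ contains index typos in the paper's statement, which you have silently corrected). Your sketch follows the strategy of the cited source, which is also the strategy the paper itself uses for its hypergraph generalization (\Cref{lemm:action-alpha-existence}, \Cref{thm:action-existence}, \Cref{lem:partial-hypergraph-rel}, \Cref{thm:action-maximal}): verify the Cuntz--Krieger relations for the candidate images of the generators and invoke universality, check the coaction axioms on generators, and for maximality apply the hypothesized action to the defining relations of $C^*(\Gamma)$ and compare coefficients. Two calibrations are worth making. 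First, in the well-definedness step the role of $\Adj u = u\Adj$ is not only to kill cross terms: after the partial-isometry relations collapse the double sums, one still has to identify expressions such as $\sum_{(v_2,w_2)\in E} u_{w_1 w_2}u_{v_1 v_2}u_{w_1 w_2}\otimes p_{w_2}$ with $\alpha(p_{w_1})$, and this uses the intertwiner relation once more (including to see that $u_{w_1 w_2}=0$ when $w_2$ has no incoming edge); moreover the third Cuntz--Krieger condition is an inequality, and pushing $p_w \leq \sum_{s(f)=w}s_f s_f^*$ through the tensor sum needs the positivity argument of \Cref{prop:tensor-compare}. Second, the ``main obstacle'' you flag is lighter than you fear: the only coefficient comparisons required for maximality are against the family $\{p_w\}$, since both $\beta(s_e)^*\beta(s_e)$ and $\beta(p_{r(e)})$ reduce to linear combinations of the $p_w$, and linear independence of nonzero mutually orthogonal projections is immediate (the paper uses exactly this in \Cref{thm:action-existence} and \Cref{lem:partial-hypergraph-rel}); no gauge-invariant-uniqueness argument for general monomials is needed. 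With these adjustments your outline is a faithful reconstruction of the Schmidt--Weber proof.
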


See also~\cite{joardar18} for more on actions
of quantum groups on graph $C^*$-algebra.

Recently, Trieb-Weber-Zenner~\cite{trieb24} introduced hypergraph $C^*$-algebras 
by replacing the graph definition of graph $C^*$-algebras with a hypergraph. 

\begin{definition}\label{def:hypergraph-Cstar-alg}
Let $\Gamma:= (V, E)$ be a hypergraph. Then the \textit{hypergraph $C^*$-algebra} 
$C^*(\Gamma)$ is the universal $C^*$-algebra generated
by mutually orthogonal projections $p_v$ for all $v \in V$ and 
partial isometries $s_e$ for all $e \in E$ such that 
\begin{enumerate}
\item\label{def:hypergraph-Cstar-alg-R1} $\displaystyle s_e^* s_f = \delta_{ef} \sum_{\substack{v \in V \\ v \in r(e)}} p_v$ for all $e, f \in E$,
\item\label{def:hypergraph-Cstar-alg-R2} $\displaystyle s_e s_e^* \leq \sum_{\substack{v \in V \\ v \in s(e)}} p_v$ for all $e \in E$,
\item\label{def:hypergraph-Cstar-alg-R3} $\displaystyle p_v \leq \sum_{\substack{e \in E \\ v \in s(e)}} s_e s_e^*$ for all $v \in V$ which are not a sink.
\end{enumerate}
\end{definition}

If a directed graph is encoded as a hypergraph as in \Cref{def:dir-graph-to-hypergraph}, then 
the corresponding hypergraph $C^*$-algebra agrees with the classical graph $C^*$-algebra.
However, hypergraph $C^*$-algebras include also new examples of 
non-nuclear $C^*$-algebras.
For more on the nuclearity of hypergraph $C^*$-algebras, we refer to the recent work by Schäfer-Weber~\cite{schaefer24}, where
the nuclearity of hypergraph $C^*$-algebras is characterized in terms of minors of the underlying hypergraph.

\section{Quantum symmetries of hypergraphs}\label{sec:qsym-hypergraphs}

\subsection{Automorphism groups of hypergraphs}\label{sec:classic-hypergraph-aut}
We first recall the definition of 
the classical automorphism group of a hypergraph and characterize it in terms of permutation matrices, 
before we introduce the quantum automorphism group of a hypergraph.

\begin{definition}\label{def:classical-aut}
Let $\Gamma:= (V, E)$ be a hypergraph. Then its \textit{automorphism group} is given by
\[
  \Aut(\Gamma) := \left\{ (\sigma, \tau) \in S_V \times S_E \ \middle| \ \forall e \in E\colon \text{$\sigma s(e) = s(\tau e)$ and $\sigma r(e) = s(\tau e)$}\right\},
\]
where $\sigma \in S_V$ acts on $\PP(V)$ by 
$
  \sigma(\{ v_1, \ldots v_k \}) := \{ \sigma v_1, \ldots, \sigma v_k \}.
$
\end{definition}

Thus, a hypergraph automorphism consists of a permutation of the vertices and a permutation of the edges, 
which are compatible by respecting the source and range maps.
Next, we show how these compatibility conditions can be  
reformulated when both permutations are given as permutation matrices.

\begin{definition}\label{def:perm-repr}
Let $X$ be a finite set. The \textit{permutation representation} 
of $S_X$ is given by $S_X \to \C^{X \times X}$, $\sigma \mapsto P_\sigma$ with 
\[
  {(P_\sigma)}_{ij} = \delta_{i\sigma(j)} \quad \forall i, j \in X.
\]
\end{definition}

Note that the permutation representation is faithful, such that we have an 
embedding $S_X \hookrightarrow \C^{X \times X}$ given by permutation matrices.

\begin{proposition}\label{prop:perm-intertwiner}
Let $X$, $Y$ be finite sets, $\sigma \in S_X$, $\tau \in S_Y$ and $A \in \C^{X \times Y}$. Then 
$A P_\tau = P_\sigma A$ if and only if 
$A_{i\tau(j)} = A_{\sigma^{-1}(i)j}$ for all $i \in X$ and $j \in Y$.
\end{proposition}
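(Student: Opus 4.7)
The plan is to reduce the matrix equation $AP_\tau = P_\sigma A$ to a pointwise equality by computing both sides entry-by-entry and invoking the sifting property of the Kronecker delta. Since $P_\tau \in \C^{Y\times Y}$, $P_\sigma \in \C^{X\times X}$ and $A \in \C^{X\times Y}$, both products lie in $\C^{X\times Y}$, so the equation $AP_\tau = P_\sigma A$ is equivalent to the family of scalar equations $(AP_\tau)_{ij} = (P_\sigma A)_{ij}$ indexed by $(i,j) \in X \times Y$.

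First I would expand the left-hand side. Using Definition~\ref{def:perm-repr},
\[
  (AP_\tau)_{ij} = \sum_{k \in Y} A_{ik}\,(P_\tau)_{kj} = \sum_{k \in Y} A_{ik}\,\delta_{k\tau(j)} = A_{i\tau(j)},
\]
since the only surviving summand is the one with $k = \tau(j)$.

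Next I would expand the right-hand side, which requires a little more care because the nontrivial index of $P_\sigma$ is the row index. Writing
\[
  (P_\sigma A)_{ij} = \sum_{k \in X} (P_\sigma)_{ik}\,A_{kj} = \sum_{k \in X} \delta_{i\sigma(k)}\,A_{kj},
\]
the delta forces $\sigma(k) = i$, i.e.\ $k = \sigma^{-1}(i)$, yielding $(P_\sigma A)_{ij} = A_{\sigma^{-1}(i)j}$. Comparing the two computations gives the claimed equivalence immediately, with both directions proved simultaneously. The only potential pitfall is the inverse appearing on the $\sigma$ side but not on the $\tau$ side, which is a genuine asymmetry stemming from the fact that $\tau$ acts on a column index while $\sigma$ acts on a row index; everything else is a direct unfolding of definitions, so no real obstacle arises.
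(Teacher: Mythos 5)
Your proposal is correct and follows essentially the same route as the paper: both compute $(AP_\tau)_{ij}$ and $(P_\sigma A)_{ij}$ entry-wise, use the definition ${(P_\sigma)}_{ij}=\delta_{i\sigma(j)}$ to collapse each sum via the Kronecker delta, and read off the equivalence directly. No gaps.
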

\begin{proof}
Using the definition of $P_\sigma$ and $P_\tau$, we compute
\[
  {(A P_\tau)}_{ij} = \sum_{k \in Y} A_{ik} {(P_\tau)}_{kj}
  = \sum_{k \in Y} \delta_{k\tau(j)} A_{ik}
  = A_{i\tau(j)} \quad \forall i \in X, j \in Y, 
\]
\[
  {(P_\sigma A)}_{ij} = \sum_{k \in X} {(P_\sigma)}_{ik} A_{kj}
  = \sum_{k \in X} \delta_{i\sigma(k)} A_{kj}
  = A_{\sigma^{-1}(i)j} \quad \forall i \in X, j \in Y.
\]
Thus, 
\[
  A P_\tau = P_\sigma A 
  \ \Longleftrightarrow \ A_{i\tau(j)} = A_{\sigma^{-1}(i)j} \quad \forall i \in X, j \in Y. 
\]
\end{proof}

By applying the previous proposition to the incidence matrices of a hypergraph, we obtain 
exactly the compatibility conditions in \Cref{def:classical-aut}. 

\begin{proposition}\label{prop:intertwiner-char}
Let $\Gamma := (V, E)$ be a hypergraph and $(\sigma, \tau) \in S_V \times S_E$. Then 
\begin{enumerate}
  \item $\sigma(s(e)) = s(\tau(e))$ for all $e \in E$ if and only if $A_s P_\tau = P_\sigma A_s$,
  \item $\sigma(r(e)) = r(\tau(e))$ for all $e \in E$ if and only if $A_r P_\tau = P_\sigma A_r$,
\end{enumerate}
where $A_s$ and $A_r$ are the incidence matrices of $\Gamma$.
\end{proposition}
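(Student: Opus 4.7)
The plan is to deduce both statements directly from \Cref{prop:perm-intertwiner} applied to the incidence matrices. For part (1), I take $X = V$, $Y = E$, and $A = A_s$. Then \Cref{prop:perm-intertwiner} says that $A_s P_\tau = P_\sigma A_s$ is equivalent to the entrywise condition
\[
  {(A_s)}_{v,\tau(e)} = {(A_s)}_{\sigma^{-1}(v),e}
  \qquad \forall v \in V, \, e \in E.
\]

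Next I would unpack both sides using the definition of $A_s$. The left-hand side equals $1$ precisely when $v \in s(\tau(e))$, and the right-hand side equals $1$ precisely when $\sigma^{-1}(v) \in s(e)$, which by definition of the action of $\sigma$ on $\PP(V)$ is the same as $v \in \sigma(s(e))$. Thus the entrywise equality above is equivalent to
\[
  v \in s(\tau(e)) \iff v \in \sigma(s(e))
  \qquad \forall v \in V, \, e \in E,
\]
and since a subset of $V$ is determined by its indicator function, this is in turn equivalent to $s(\tau(e)) = \sigma(s(e))$ for all $e \in E$.

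Part (2) follows by exactly the same argument with $A_s$ replaced by $A_r$ and $s$ replaced by $r$ throughout. There is no real obstacle here: the proof is essentially a translation between the matrix identity and the set-theoretic condition, made available by \Cref{prop:perm-intertwiner} and the definition of the incidence matrices.
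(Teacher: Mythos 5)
Your proof is correct and follows the same route as the paper: apply \Cref{prop:perm-intertwiner} to $A = A_s$ (resp.\ $A_r$), translate the entrywise identity into the membership condition $v \in s(\tau(e)) \iff \sigma^{-1}(v) \in s(e)$, and rewrite the latter as $v \in \sigma(s(e))$. No gaps; this matches the paper's argument.
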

\begin{proof}
We prove only the first statement about the source map $s$. The second statement about the range map $r$ follows similarly.
By \Cref{prop:perm-intertwiner}, we have 
\[
  A_s P_\tau = P_\sigma A_s \ \Longleftrightarrow \ {(A_s)}_{v\tau(e)} = {(A_s)}_{\sigma^{-1}(v) e} \quad \forall v \in V, e \in E
\]
and by the definition of $A_s$, the right-hand side is equivalent to 
\[
  v \in s(\tau(e)) \ \Longleftrightarrow \ \sigma^{-1}(v) \in s(e) \quad \forall v \in V, e \in E.
\]
However, $\sigma^{-1}(v) \in s(e)$ can be rewritten to $v \in \sigma s(e)$, such that we obtain
\[
  v \in s(\tau(e)) \ \Longleftrightarrow \ v \in \sigma s(e) \quad \forall v \in V, e \in E.
\]
This states exactly
\[
   s(\tau(e)) = \sigma(s(e)) \quad \forall e \in E.
\]
\end{proof}

\subsection{Quantum automorphism groups of hypergraphs}

Using the characterization of classical hypergraph automorphisms in terms 
of permutations matrices, we can now define the quantum automorphism group of 
a hypergraph.

\begin{definition}\label{def:hypergraph-qaut}
Let $\Gamma := (V, E)$ be a hypergraph and denote with $\A$ the universal unital $C^*$-algebra 
with generators $u_{vw}$ for all $v, w \in V$ and $u_{ef}$ for all $e,f \in E$
such that
\begin{enumerate}
  \item $u_V := {(u_{v w})}_{v, w \in V}$ and $u_E := {(u_{e f})}_{e, f \in E}$ are magic unitaries,
  \item $A_s u_E = u_V A_s$ and $A_r u_E = u_V A_r$, where $A_s, A_r \in \C^{V \times E}$ are the incidence matrices of $\Gamma$.
\end{enumerate}
Then $\Aut^+(\Gamma) := (\A, u_V \oplus u_E)$ is the \textit{quantum automorphism group of $\Gamma$}.
\end{definition}

Intuitively, we replace the permutations $\sigma \in S_V$ and $\tau \in S_E$ 
in the definition of the classical automorphism group by quantum permutation matrices $u_V$ and $u_E$. 
The compatibility conditions between these two matrices can then be expressed by the intertwining relations
from the previous sections.

Before we show that the previous definition
of $\Aut^+(\Gamma)$ indeed generalizes the classical automorphism group $\Aut(\Gamma)$, 
we first comment on the relations in \Cref{def:hypergraph-qaut}.

\begin{remark}\label{rem:hypergraph-qaut-rels}
Recall that the magic unitary relations of $u_V$ are given by 
\[
  u_{vw}^2 = u_{vw}^* = u_{vw} \quad \forall v, w \in V,
  \qquad
  \sum_{w \in V} u_{vw} = \sum_{w \in V} u_{wv} = 1
  \quad \forall v \in V.
\]
Similarly, the magic relations of $u_E$ are given by 
\[
  u_{ef}^2 = u_{ef}^* = u_{ef} \quad \forall e, f \in E,
  \qquad
  \sum_{f \in E} u_{ef} = \sum_{f \in e} u_{fe} = 1
  \quad \forall e \in E.
\]
and the intertwiner relations $A_s u_E = u_V A_s$ and $A_r u_E = u_V A_r$
can be written as
\[
  \sum_{\substack{f \in E \\ v \in s(f)}} u_{fe} 
  = 
  \sum_{\substack{w \in V \\ w \in s(e)}} u_{vw},  
  \qquad
  \sum_{\substack{f \in E \\ v \in r(f)}} u_{fe} 
  = 
  \sum_{\substack{w \in V \\ w \in r(e)}} u_{vw}
  \qquad
  \forall v \in V, e \in E.
\]
Note that $A_s^*$ and $A_r^*$ are also intertwiners by \Cref{prop:star-intertwiner} since $u_V$ and $u_E$ are unitaries. 
Thus, we have the additional relations $A_s^* u_V = u_E A_s^*$ and $A_r^* u_V = u_E A_r^*$,
which can be written as 
\[
  \sum_{\substack{w \in V \\ w \in s(e)}} u_{wv} 
  = 
  \sum_{\substack{f \in E \\ v \in s(f)}} u_{ef}, 
  \qquad
  \sum_{\substack{w \in V \\ w \in r(e)}} u_{wv}
  =
  \sum_{\substack{f \in E \\ v \in r(f)}} u_{ef} 
  \qquad
  \forall v \in V, e \in E.
\]
\end{remark}

\begin{remark}
Denote with $\A$ the $C^*$-algebra $C(\Aut^+(\Gamma))$.
Then the magic unitary relations and intertwiner relations are compatible with the comultiplication in the sense that there 
exists a well-defined unital $*$-homomorphism $\Delta \colon \A \to \A \otimes \A$ with
\[
  \Delta(u_{v_1 v_2}) = \sum_{w \in V} u_{v_1 w} \otimes u_{w \otimes v_2} \quad \forall v_1, v_2 \in V,
\]
\[
  \Delta(u_{e_1 e_2}) = \sum_{f \in E} u_{e_1 f} \otimes u_{f \otimes e_2} \quad \forall e_1, e_2 \in E.
\]
Thus, $\Aut^+(\Gamma)$ is well-defined as a compact matrix quantum group. For convenience, we will show the existence of $\Delta \colon \A \to \A \otimes \A$ explicitly using the universal property of 
$\A$. Thus, we have to verify that the elements $\Delta(u_{v_1 v_2})$ and $\Delta(u_{e_1 e_2})$ satisfy
the magic unitary and intertwiners relations from \Cref{def:hypergraph-qaut}.
For the magic unitary relations, we compute
\begin{align*}
  {\Delta(u_{v_1 v_2})}^2 
  &=
  \sum_{w_1, w_2 \in V} u_{v_1 w_1} u_{v_1 w_2} \otimes u_{w_1 v_2} u_{w_2 v_2} 
  =
  \sum_{w \in V} u_{v_1 w} \otimes u_{w \otimes v_2}
  =
  \Delta(u_{v_1 v_2}), \\
  {\Delta(u_{v_1 v_2})}^*
  &=
  \sum_{w \in V} u_{v_1 w}^* \otimes u_{w \otimes v_2}^*
  =
  \sum_{w \in V} u_{v_1 w} \otimes u_{w \otimes v_2}
  =
  \Delta(u_{v_1 v_2})
\end{align*}
for all $v_1, v_2 \in V$ and
\begin{align*}
  \sum_{v_2 \in V} \Delta(u_{v_1 v_2}) &= 
  \sum_{v_2, w \in V} u_{v_1 w} \otimes u_{w v_2} \\
  &= \sum_{w \in V} u_{v_1 w} \otimes \left(\sum_{v_2 \in V} u_{w v_2}\right)
  = \left(\sum_{w \in V} u_{v_1 w}\right) \otimes 1
  = 1 \otimes 1
\end{align*}
for all $v_1 \in V$. Similarly, one computes 
\[
  \sum_{v_2 \in V} \Delta(u_{v_2 v_1}) =  \sum_{v_2, w \in V} u_{v_2 w} \otimes u_{w v_1} = 1 \otimes 1,
\]
such that $\Delta(u_V) := {(\Delta(u_{vw}))}_{v,w\in V}$ is a magic unitary.
By replacing the vertex set $V$ with the edge set $E$, the same computation shows that 
$\Delta(u_E) := {(\Delta(u_{ef}))}_{e,f\in E}$ is also a magic unitary.
To check the intertwiner relations, we compute 
\begin{align*}
  \sum_{\substack{f \in E \\ v \in s(f)}} \Delta(u_{fe}) &=
  \sum_{\substack{f \in E \\ v \in s(f)}} \sum_{g \in E} u_{fg} \otimes u_{ge} =
  \sum_{g \in E} \bigg( \sum_{\substack{f \in E\\ v \in s(f)}} u_{fg} \bigg) \otimes u_{ge} =
  \sum_{g \in E} \sum_{\substack{x \in V \\ x \in s(g)}} u_{vx} \otimes u_{ge} \\ &=
  \sum_{x \in V} u_{vx} \otimes \bigg( \sum_{\substack{g \in E \\ x \in s(g)}} u_{ge} \bigg) =
  \sum_{x \in V} u_{vx} \otimes \bigg( \sum_{\substack{w \in V \\ w \in s(e)}} u_{xw} \bigg) \\ &= 
  \sum_{\substack{w \in V \\ w \in s(e)}} \sum_{x \in V} u_{vx} \otimes  u_{xw}  =  
  \sum_{\substack{w \in V \\ w \in s(e)}} \Delta(u_{vw})
\end{align*}
for all $v \in V$, $e \in E$.
Hence, $A_s \Delta(u_E) = \Delta(u_V) A_s$. By replacing the source map $s$ with the 
range map $r$, the same computation also shows $A_r \Delta(u_E) = \Delta(u_V) A_r$.
\end{remark}

\begin{remark} Let $u := u_V \oplus u_E$ and define the block matrix 
\[
    A := \begin{pmatrix}
      0     & A_s \\
      A_r^* & 0
    \end{pmatrix} \in \C^{(V \sqcup E) \times (V \sqcup E)}.
\]
Then $Au = uA$ is equivalent to $A_s u_E = u_V A_s$ and $A_r^* u_V = u_E A_r^*$, 
where the second equation is again equivalent to $A_r u_E = u_V A_r$ by \Cref{prop:star-intertwiner}.
Therefore, the relations $A_s u_E = u_V A_s$ and $A_r u_E = u_V A_r$ in \Cref{def:hypergraph-qaut} can be formulated
as the single intertwiner relation $Au = uA$.
\end{remark}

The following proposition then shows that the quantum
automorphism group $\Aut^+(\Gamma)$ generalizes the 
classical automorphism group $\Aut(\Gamma)$ in the sense of compact matrix quantum groups.

\begin{proposition}
Let $\Gamma$ by a hypergraph. Then $\Spec C(\Aut^+(\Gamma)) \cong \Aut(\Gamma)$ as finite groups.
\end{proposition}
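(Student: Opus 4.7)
The plan is to exhibit a bijection between characters of $C(\Aut^+(\Gamma))$ and pairs $(\sigma, \tau) \in \Aut(\Gamma)$, and then check that this bijection is a group isomorphism with respect to convolution on the spectrum and composition in $\Aut(\Gamma)$.

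First I would analyze an arbitrary character $\chi \colon C(\Aut^+(\Gamma)) \to \C$. Since each $u_{vw}$ is a self-adjoint projection, $\chi(u_{vw}) \in \{0,1\}$, and the magic unitary row/column sum relations force the matrix $\chi(u_V) \in \C^{V \times V}$ to have exactly one $1$ in each row and column. Thus $\chi(u_V) = P_\sigma$ for a unique $\sigma \in S_V$ (via the permutation representation of \Cref{def:perm-repr}). The same argument applied to $u_E$ yields a unique $\tau \in S_E$ with $\chi(u_E) = P_\tau$. Applying $\chi$ to the intertwiner relations $A_s u_E = u_V A_s$ and $A_r u_E = u_V A_r$ gives $A_s P_\tau = P_\sigma A_s$ and $A_r P_\tau = P_\sigma A_r$, which by \Cref{prop:intertwiner-char} is exactly the condition $(\sigma, \tau) \in \Aut(\Gamma)$.

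Next I would construct the inverse direction. Given $(\sigma, \tau) \in \Aut(\Gamma)$, define scalars $\chi_{\sigma,\tau}(u_{vw}) := (P_\sigma)_{vw}$ and $\chi_{\sigma,\tau}(u_{ef}) := (P_\tau)_{ef}$. These scalar matrices manifestly satisfy the magic unitary relations, and by \Cref{prop:intertwiner-char} they satisfy the incidence intertwiner relations. Hence, by the universal property of $C(\Aut^+(\Gamma))$, there is a unique unital $*$-homomorphism $\chi_{\sigma,\tau} \colon C(\Aut^+(\Gamma)) \to \C$ extending these assignments. This gives the inverse of the previous map, so we have a bijection of underlying sets.

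Finally I would verify compatibility of group structures. The group law on $\Spec C(\Aut^+(\Gamma))$ is convolution $(\chi_1 \cdot \chi_2)(a) := (\chi_1 \otimes \chi_2)\Delta(a)$. Using $\Delta(u_{v_1 v_2}) = \sum_{w \in V} u_{v_1 w} \otimes u_{w v_2}$, a direct computation gives
\[
(\chi_{\sigma_1,\tau_1} \cdot \chi_{\sigma_2,\tau_2})(u_{v_1 v_2}) = \sum_{w \in V} (P_{\sigma_1})_{v_1 w}(P_{\sigma_2})_{w v_2} = (P_{\sigma_1} P_{\sigma_2})_{v_1 v_2} = (P_{\sigma_1 \sigma_2})_{v_1 v_2},
\]
and analogously for $u_E$, so convolution corresponds to componentwise composition $(\sigma_1 \sigma_2, \tau_1 \tau_2)$, which is the group law on $\Aut(\Gamma)$.

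The most delicate step is not any single calculation but keeping bookkeeping straight: one has to verify that the assignment on generators really defines a character (i.e., that all universal relations are genuinely preserved by the scalars $(P_\sigma)_{vw}, (P_\tau)_{ef}$), and that the convolution product on $\Spec$ matches composition in the correct order. Everything else is a direct consequence of \Cref{prop:intertwiner-char} together with the universal property of the defining $C^*$-algebra.
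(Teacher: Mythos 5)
Your proposal is correct and follows essentially the same route as the paper: identify characters with pairs of permutation matrices via the magic unitary relations, and use \Cref{prop:intertwiner-char} to translate the intertwiner relations into the automorphism condition. The only difference is that the paper delegates the group structure on $\Spec C(\Aut^+(\Gamma))$ and its matrix realization to a cited general result (Timmermann, Proposition 6.1.11), whereas you verify the bijectivity and the compatibility of convolution with composition by hand, which makes your argument more self-contained but not substantively different.
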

\begin{proof}
Let $\Gamma := (V, E)$ and denote with $\A$ the $C^*$-algebra $C(\Aut^+(\Gamma))$. Then $\Spec \A$ is a group 
with multiplication given by
\[
    \varphi * \psi := (\varphi \otimes \psi) \circ \Delta 
    \qquad 
    \forall \varphi, \psi \in \Spec A
\]
and it is isomorphic to a subgroup of unitary matrices $G \subseteq \C^{(V \sqcup E) \times (V \sqcup E)}$ via 
\[
  \varphi \in \Spec \A \ \longleftrightarrow \ \varphi(u) := (\varphi(u_{ij})) \in \C^{(V \sqcup E) \times (V \sqcup E)}.
\]
See for example~\cite[Proposition 6.1.11]{timmermann08}. Further, we have the decomposition
$u = u_E \oplus u_V$,
such that $\varphi(u) = \varphi(u_E) \oplus \varphi(u_V)$ is given by a pair of matrices $\varphi(u_E)$ and $\varphi(u_V)$.
Since $u_V$ and $u_E$ are magic unitaries, $\varphi(u_E)$ and $\varphi(u_V)$ are exactly permutation matrices, 
which correspond to a pair of permutations $(\sigma, \tau) \in S_V \times S_E$ via the 
permutation representation from \Cref{def:perm-repr}. 
\Cref{prop:intertwiner-char} then implies that $(\sigma, \tau)$ are exactly 
automorphisms of $\Gamma$.
\end{proof}

\subsection{Examples of hypergraphs with maximal quantum symmetry}\label{sec:example-max-qsym}
Before we come to examples of quantum automorphism groups of hypergraphs, 
we first show that these quantum groups are always subgroups of $S_V^+ * S_E^+$.

\begin{proposition}\label{prop:free-prod-subgroup}
Let $\Gamma:= (V, E)$ be a hypergraph.
Then 
\[
  \Aut^+(\Gamma) \subseteq S_V^+ * S_E^+.
\]
\end{proposition}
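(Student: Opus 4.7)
The plan is to construct a surjective morphism of compact quantum groups $\varphi \colon C(S_V^+ * S_E^+) \to C(\Aut^+(\Gamma))$ and verify it is compatible with the comultiplications. Recall that, by the example at the end of the subsection on free products, $C(S_V^+ * S_E^+)$ is the universal unital $C^*$-algebra generated by two magic unitaries $u'_V = (u'_{vw})_{v,w \in V}$ and $u'_E = (u'_{ef})_{e,f \in E}$ with no further relations between them.

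First I would invoke the universal property of $C(S_V^+ * S_E^+)$. The generators $u_{vw}$ and $u_{ef}$ of $\A := C(\Aut^+(\Gamma))$ form magic unitaries $u_V$ and $u_E$ by condition (1) of \Cref{def:hypergraph-qaut}. Hence there is a unique unital $*$-homomorphism
\[
  \varphi \colon C(S_V^+ * S_E^+) \to \A,
  \qquad u'_{vw} \mapsto u_{vw},
  \qquad u'_{ef} \mapsto u_{ef}.
\]
Since the image of $\varphi$ contains all generators of $\A$, the map $\varphi$ is surjective.

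Next I would verify that $\varphi$ is a morphism of compact quantum groups, i.e.\ that $(\varphi \otimes \varphi) \circ \Delta_{S_V^+ * S_E^+} = \Delta_{\Aut^+(\Gamma)} \circ \varphi$. Since both sides are unital $*$-homomorphisms from $C(S_V^+ * S_E^+)$ into $\A \otimes \A$, it suffices to check the equality on the generators $u'_{vw}$ and $u'_{ef}$. For $u'_{vw}$ we have
\[
  (\varphi \otimes \varphi)\Delta(u'_{vw}) = \sum_{w' \in V} \varphi(u'_{vw'}) \otimes \varphi(u'_{w'w}) = \sum_{w' \in V} u_{vw'} \otimes u_{w'w} = \Delta(u_{vw}) = \Delta(\varphi(u'_{vw})),
\]
where the comultiplication on $\A$ is the one recorded in the remark following \Cref{def:hypergraph-qaut}. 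The analogous identity for $u'_{ef}$ is identical with $V$ replaced by $E$.

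There is essentially no obstacle here: the point is that $S_V^+ * S_E^+$ is by construction the universal object carrying two independent magic unitaries of the right sizes, while $\Aut^+(\Gamma)$ is a quotient of this by the additional intertwiner relations $A_s u_E = u_V A_s$ and $A_r u_E = u_V A_r$ from condition (2) of \Cref{def:hypergraph-qaut}. Hence the surjection $\varphi$ exhibits $\Aut^+(\Gamma)$ as a subgroup of $S_V^+ * S_E^+$ in the sense defined earlier.
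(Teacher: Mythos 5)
Your proposal is correct and follows essentially the same route as the paper: both use the universal property of $C(S_V^+ * S_E^+)$ applied to the magic unitaries $u_V$ and $u_E$, note surjectivity because these entries generate $C(\Aut^+(\Gamma))$, and verify compatibility with the comultiplications on generators. No issues.
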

\begin{proof}
Denote with $u$ the fundamental representation of $\Aut^+(\Gamma)$
and with $\widehat{u}$ the fundamental 
representation of $S_V^+ * S_E^+$. 
Since $u_V$ and $u_E$ are magic unitaries, the universal property of $C(S_V^+ * S_E^+)$ 
implies the existence of 
a unital $*$-homomorphism $\varphi \colon C(S_V^+ * S_E^+) \to C(\Aut^+(\Gamma))$ with
\[
  \widehat{u}_{vw} \mapsto u_{vw} 
  \quad \forall v, w \in V,
  \qquad
  \widehat{u}_{ef} \mapsto u_{ef} 
  \quad \forall e, f \in E.
\]
This $*$-homomorphism is surjective since $C(\Aut^+(\Gamma))$ is generated by the entries
of $u_V$ and $u_E$. Further, it is a morphism of compact quantum groups, because 
\[
  \Delta(\varphi(\widehat{u}_{vw})) = \Delta({u}_{vw}) = \sum_{x \in V} {u}_{vx} \otimes {u}_{xw}
  = \sum_{x \in V} \varphi(\widehat{u}_{vx}) \otimes \varphi(\widehat{u}_{xw})
  = (\varphi \otimes \varphi)(\Delta(\widehat{u}_{vw}))
\]
for all $v, w \in V$ and similarly $\Delta(\varphi(\widehat{u}_{ef})) = \Delta(\varphi(\widehat{u}_{ef}))$
for all $e, f \in E$.
Thus, $\Aut^+(\Gamma) \subseteq S_V^+ * S_E^+$.
\end{proof}

Next, we construct a concrete family of hypergraphs $\Gamma_{n,m}$
for which equality in the previous proposition holds.
Thus, these hypergraphs have maximal possible quantum symmetries
in the sense of \Cref{def:hypergraph-qaut}.

\begin{definition}\label{def:max-sym-hgraph}
Let $n, m \in \N$. Define the hypergraph $\Gamma_{n, m} := (V, E)$ with 
vertices $V = \{1, \ldots, n\}$, edges $E = \{1, \ldots, m\}$ and source and range maps
\[
    s(e) := V, \quad r(e) := V \quad \forall e \in E.
\]
\end{definition}

\begin{proposition}
Let $n, m \in \N$ and $\Gamma_{n, m} := (V, E)$ be the hypergraph from \Cref{def:max-sym-hgraph}. Then 
\[
  \Aut^+(\Gamma_{n,m}) = S_V^+ * S_E^+.
\]
\end{proposition}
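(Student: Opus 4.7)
The plan is to establish the reverse inclusion $S_V^+ * S_E^+ \subseteq \Aut^+(\Gamma_{n,m})$, which combined with \Cref{prop:free-prod-subgroup} yields equality. To do so, I will construct a surjective morphism of compact quantum groups
\[
  \psi \colon C(\Aut^+(\Gamma_{n,m})) \to C(S_V^+ * S_E^+)
\]
sending the generators $u_{vw}$ and $u_{ef}$ to the corresponding generators $\widehat{u}_{vw}$ and $\widehat{u}_{ef}$. This uses the universal property of $C(\Aut^+(\Gamma_{n,m}))$.

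First, I will check that the elements $\widehat{u}_{vw}$ and $\widehat{u}_{ef}$ in $C(S_V^+ * S_E^+)$ satisfy the defining relations of \Cref{def:hypergraph-qaut}. The magic unitary relations for $\widehat{u}_V$ and $\widehat{u}_E$ hold by construction of the free product. The key observation is that for $\Gamma_{n,m}$, we have $s(e) = r(e) = V$ for every $e \in E$, so $A_s = A_r = J$ is the all-ones matrix in $\C^{V \times E}$. Therefore
\[
  (J \widehat{u}_E)_{v e} = \sum_{f \in E} \widehat{u}_{fe} = 1,
  \qquad
  (\widehat{u}_V J)_{v e} = \sum_{w \in V} \widehat{u}_{vw} = 1
\]
for all $v \in V$ and $e \in E$, where both equalities use that $\widehat{u}_V$ and $\widehat{u}_E$ are magic unitaries. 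Hence $A_s \widehat{u}_E = \widehat{u}_V A_s$ and similarly $A_r \widehat{u}_E = \widehat{u}_V A_r$. This is the only nontrivial check, and it reduces to the trivial fact that in $\Gamma_{n,m}$ the incidence relations impose no restriction beyond what the magic unitary axioms already give.

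By the universal property of $C(\Aut^+(\Gamma_{n,m}))$, there is thus a unital $*$-homomorphism $\psi$ as above. It is surjective since its image contains all generators $\widehat{u}_{vw}, \widehat{u}_{ef}$ of $C(S_V^+ * S_E^+)$, and it intertwines the comultiplications because this can be verified on generators, where both sides restrict to the standard formula $\sum_k u_{ik} \otimes u_{kj}$. Hence $\psi$ is a morphism of compact quantum groups witnessing $S_V^+ * S_E^+ \subseteq \Aut^+(\Gamma_{n,m})$.

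Finally, combining with \Cref{prop:free-prod-subgroup}, we obtain two surjective morphisms $\varphi$ and $\psi$ which are mutually inverse on generators and therefore bijective, so $\Aut^+(\Gamma_{n,m}) = S_V^+ * S_E^+$. The argument contains no real obstacle: the computation that the intertwiner relations collapse to $1 = 1$ is the whole content, and everything else is a routine application of universal properties.
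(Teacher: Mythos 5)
Your proposal is correct and follows essentially the same route as the paper: both use \Cref{prop:free-prod-subgroup} for one inclusion and then invoke the universal property of $C(\Aut^+(\Gamma_{n,m}))$ to build the inverse morphism, with the only substantive check being that the intertwiner relations collapse to $1=1$ because $s(e)=r(e)=V$ for all edges. Your phrasing of this via the all-ones incidence matrix $J$ is just a compact restatement of the paper's index-set computation.
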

\begin{proof}
Denote with $u$ the fundamental representation of $\Aut^+(\Gamma_{n,m})$ and 
with $\widehat{u}$ the fundamental representation of $S_V^+ * S_E^+$.
By the proof of \Cref{prop:free-prod-subgroup}, we have $\Aut^+(\Gamma_{n,m}) \subseteq S_V^+ * S_E^+$
via a unital $*$-homomorphism $C(S_V^+ * S_E^+) \to C(\Aut^+(\Gamma_{n,m}))$ with 
\[
  \widehat{u}_{vw} \mapsto u_{vw}
  \quad \forall v, w \in V,
  \qquad
  \widehat{u}_{ef} \mapsto u_{ef}
  \quad \forall e, f \in E.
\]
To show the other inclusion, we construct the inverse $*$-homomorphism
using the universal property of $C(\Aut^+(\Gamma_{n,m}))$. Thus, we have to show that 
$\widehat{u}_V$ and $\widehat{u}_E$ satisfy the relations from \Cref{def:hypergraph-qaut}.
However, $\widehat{u}_V$ and $\widehat{u}_{E}$ are magic unitaries by definition
and we compute
\[
  \sum_{\substack{f \in E \\ v \in s(f)}} u_{fe} 
  =
  \sum_{f \in E} u_{fe}
  = 
  1 
  =
  \sum_{w \in V} u_{vw}
  = 
  \sum_{\substack{w \in V \\ w \in s(e)}} u_{vw}
  \quad 
  \forall v \in V, e \in E,
\]
such that $A_s \widehat{u}_E = \widehat{u}_V A_s$ by \Cref{rem:hypergraph-qaut-rels}. Similarly, one shows
$A_r \widehat{u}_E = \widehat{u}_V A_r$ by replacing the source map $s$ 
with the range map $r$. Thus, the $*$-homomorphism from 
\Cref{prop:free-prod-subgroup} is invertible, such that
\[
  \Aut^+(\Gamma_{n,m}) = S_V^+ * S_E^+.
\]
\end{proof}

\subsection{Opposite and dual hypergraphs}\label{sec:opp-dual-hypergraphs}

Next, we compute the quantum automorphism groups of the opposite and the dual of a hypergraph.
Recall from \Cref{def:dual-hypergraph} that the opposite hypergraph $\Gamma^{\op}$ is obtained 
by interchanging the source map and the range map of a hypergraph $\Gamma$. 
In the classical case, both $\Gamma$ and $\Gamma^{op}$ have the same automorphism group.

\begin{proposition}
Let $\Gamma:= (V, E)$ be a hypergraph. Then $\Aut(\Gamma) = \Aut(\Gamma^{op})$.
\end{proposition}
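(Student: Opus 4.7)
The plan is to observe that this statement is essentially immediate from unfolding the definitions of $\Gamma^{\op}$ and the classical automorphism group, and that nothing more than a formal rewriting is required.

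First, I would recall the characterization from \Cref{def:classical-aut}: a pair $(\sigma, \tau) \in S_V \times S_E$ lies in $\Aut(\Gamma)$ precisely when the two conditions $\sigma s(e) = s(\tau e)$ and $\sigma r(e) = r(\tau e)$ hold for every $e \in E$. Then, applying this same definition to $\Gamma^{\op}$ with $s^{\op} := r$ and $r^{\op} := s$ as given in \Cref{def:opposite-hypergraph}, a pair $(\sigma, \tau)$ belongs to $\Aut(\Gamma^{\op})$ exactly when $\sigma s^{\op}(e) = s^{\op}(\tau e)$ and $\sigma r^{\op}(e) = r^{\op}(\tau e)$ hold for every $e \in E$, which by substitution becomes $\sigma r(e) = r(\tau e)$ and $\sigma s(e) = s(\tau e)$.

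These are literally the same two conditions as for $\Aut(\Gamma)$, merely listed in the opposite order. Since both automorphism groups sit inside the same ambient group $S_V \times S_E$, this equality of defining conditions gives a set-theoretic equality of subgroups, and as they inherit their multiplication from $S_V \times S_E$ they are equal as groups.

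There is no real obstacle here; the only mild subtlety is being careful that the order of the two conditions does not matter, but since both are required simultaneously this is automatic. The argument should take only a few lines and serves more as a sanity check on the definition of the opposite hypergraph than as a substantive result.
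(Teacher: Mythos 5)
Your proposal is correct and follows exactly the same route as the paper: unfold $s^{\op} = r$ and $r^{\op} = s$, observe that the defining conditions for $\Aut(\Gamma^{\op})$ are just those for $\Aut(\Gamma)$ with the two conditions interchanged, and conclude equality of subgroups of $S_V \times S_E$. Nothing is missing.
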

\begin{proof}
The statement follows directly, since 
\begin{align*}
  \sigma s(e) = s(\tau e) \quad & 
  \Longleftrightarrow \quad  
  \sigma r^{\op}(e) = r^{\op}(\tau e)
  \qquad
  \forall e \in E, \\
  \sigma r(e) = r(\tau e) \quad & 
  \Longleftrightarrow \quad  
  \sigma s^{\op}(e) = s^{\op}(\tau e)
  \qquad
  \forall e \in E
\end{align*}
for any pair of permutations $(\sigma, \tau) \in S_V \times S_E$.
\end{proof}

The previous proposition can be generalized directly into the quantum setting.

\begin{proposition}\label{prop:opposite-isom}
Let $\Gamma$ be a hypergraph. Then $\Aut^+(\Gamma) = \Aut^+(\Gamma^{op})$.
\end{proposition}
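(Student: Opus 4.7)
The plan is to observe that the defining relations of $\Aut^+(\Gamma)$ and $\Aut^+(\Gamma^{\op})$ are literally the same set of relations on the generators, just listed in swapped order, so the universal $C^*$-algebras coincide and the fundamental representations $u_V \oplus u_E$ match.

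More concretely, I would first unpack the incidence matrices of the opposite hypergraph: by \Cref{def:opposite-hypergraph} the source and range maps are swapped, so $A_s^{\op} = A_r$ and $A_r^{\op} = A_s$, where $A_s^{\op}$ and $A_r^{\op}$ denote the incidence matrices of $\Gamma^{\op}$. Writing out the defining relations from \Cref{def:hypergraph-qaut} for $\Aut^+(\Gamma^{\op})$, they become
\[
  A_r u_E = u_V A_r, \qquad A_s u_E = u_V A_s,
\]
together with the magic unitary conditions on $u_V$ and $u_E$. These are exactly the relations defining $C(\Aut^+(\Gamma))$.

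By the universal property, the identity map on the generators $u_{vw}$ and $u_{ef}$ then extends to mutually inverse unital $*$-homomorphisms between $C(\Aut^+(\Gamma))$ and $C(\Aut^+(\Gamma^{\op}))$. Since the comultiplications on both sides are defined by the same formula on the generators, this $*$-isomorphism intertwines the comultiplications and sends the fundamental representation $u_V \oplus u_E$ of one to that of the other, giving an isomorphism of compact matrix quantum groups. There is no real obstacle here; the only thing to keep track of is making sure the labeling swap $s \leftrightarrow r$ is handled correctly when transcribing the relations from \Cref{def:hypergraph-qaut} into the setting of $\Gamma^{\op}$.
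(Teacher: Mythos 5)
Your proposal is correct and follows essentially the same route as the paper: identify $A_{s^{\op}} = A_r$ and $A_{r^{\op}} = A_s$, conclude that the generators satisfy identical relations, invoke the universal property to obtain mutually inverse $*$-homomorphisms, and check compatibility with the comultiplication. No gaps.
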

\begin{proof}
Let $\Gamma := (V, E)$. Denote with $u$ the fundamental representation of $\Aut^+(\Gamma)$ and with 
$\widehat{u}$ the fundamental representation of $\Aut^+(\Gamma^{op})$. By 
definition, we have $A_{s^{\op}} = A_r$ and $A_{r^{\op}} = A_s$, such that 
the entries of $u$ and $\widehat{u}$ satisfy exactly the same relations.
Hence, there exists a $*$-isomorphism $\varphi \colon C(\Aut^+(\Gamma)) \to C(\Aut^+(\Gamma^{op}))$ 
with 
\[
  u_{vw} \mapsto \widehat{u}_{vw} \quad \forall v,w \in V,
  \qquad
  u_{ef} \mapsto \widehat{u}_{ef} \quad \forall e,f \in E
\]
by the universal properties of $C(\Aut^+(\Gamma))$ and $C(\Aut^+(\Gamma^{op}))$.
Further, it is a morphism of compact quantum groups since 
\begin{align*}
  \Delta(\varphi(u_{vw})) &= \sum_{x \in V} \widehat{u}_{xw} \otimes \widehat{u}_{xw} = (\varphi \otimes \varphi)(\Delta(u_{vw})) \\
  \Delta(\varphi(u_{ef})) &= \sum_{g \in V} \widehat{u}_{eg} \otimes \widehat{u}_{gf} = (\varphi \otimes \varphi)(\Delta(u_{ef}))
\end{align*}
for all $v, w \in V$ and $e, f \in E$. Thus, $\Aut^+(\Gamma) = \Aut^+(\Gamma^{op})$.
\end{proof}

Next, we consider dual hypergraphs. Recall from \Cref{def:dual-hypergraph} that the dual $\Gamma^*$ of a hypergraph $\Gamma$
is obtained by interchanging the vertices and edges.
As in the case of the opposite hypergraph, a hypergraph and its dual have isomorphic
classical automorphism groups.

\begin{proposition}\label{prop:dual-isom}
Let $\Gamma$ be a hypergraph. Then
\[
  \Aut(\Gamma^*) = \left\{ (\tau, \sigma) \ \middle| \ (\sigma, \tau) \in \Aut(\Gamma) \right\}.
\]
In particular, $\Aut(\Gamma) \cong \Aut(\Gamma^*)$.
\end{proposition}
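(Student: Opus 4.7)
The plan is to unfold the definitions on both sides and then translate between them using the basic duality $v \in s(e) \Leftrightarrow e \in s^*(v)$ (and similarly for $r$, $r^*$) coming from \Cref{def:dual-hypergraph}. Concretely, a pair $(\tau, \sigma) \in S_E \times S_V$ lies in $\Aut(\Gamma^*)$ iff for every $v \in V$ (a hyperedge of $\Gamma^*$) we have $\tau s^*(v) = s^*(\sigma v)$ and $\tau r^*(v) = r^*(\sigma v)$, and I want to show these conditions are pointwise equivalent to the two conditions defining membership of $(\sigma,\tau)$ in $\Aut(\Gamma)$.

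First, I would fix $e \in E$ and rewrite $\sigma s(e) = s(\tau e)$ elementwise: it says $v \in \sigma s(e) \Leftrightarrow v \in s(\tau e)$ for all $v$, i.e.\ $\sigma^{-1}v \in s(e) \Leftrightarrow \tau e \in s^*(v)$, which by duality becomes $e \in s^*(\sigma^{-1}v) \Leftrightarrow e \in \tau^{-1} s^*(v)$. Quantifying over $e$ gives $s^*(\sigma^{-1}v) = \tau^{-1} s^*(v)$ for all $v \in V$, which after substituting $v \mapsto \sigma v$ is exactly $\tau s^*(v) = s^*(\sigma v)$. The same computation with $r$ and $r^*$ in place of $s$ and $s^*$ handles the range condition. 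This establishes the set equality claimed in the statement.

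For the ``in particular'' part, I would check that the bijection $\Phi \colon \Aut(\Gamma) \to \Aut(\Gamma^*)$, $(\sigma, \tau) \mapsto (\tau, \sigma)$, is a group homomorphism. Since the group operations on both sides are coordinatewise composition inside $S_V \times S_E$ and $S_E \times S_V$ respectively, this is immediate: $\Phi((\sigma_1, \tau_1)(\sigma_2, \tau_2)) = \Phi(\sigma_1 \sigma_2, \tau_1 \tau_2) = (\tau_1 \tau_2, \sigma_1 \sigma_2) = \Phi(\sigma_1,\tau_1)\Phi(\sigma_2,\tau_2)$.

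No step is really an obstacle here; the only thing to be careful with is keeping track of $\sigma$ versus $\sigma^{-1}$ when switching between ``$v \in \sigma S$'' and ``$\sigma^{-1} v \in S$'' for a subset $S \subseteq V$. Beyond that, the argument is purely formal manipulation of the incidence relation and its transpose.
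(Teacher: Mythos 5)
Your proof is correct and follows essentially the same route as the paper: both rest on the duality $v \in s(e) \Leftrightarrow e \in s^*(v)$ and careful tracking of $\sigma$ versus $\sigma^{-1}$; the only difference is that you run a single chain of biconditionals (yielding both inclusions at once), while the paper verifies the two inclusions by separate set-builder computations. Your explicit check that $(\sigma,\tau)\mapsto(\tau,\sigma)$ is a group homomorphism is a small but welcome addition that the paper leaves implicit.
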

\begin{proof}
Let $\Gamma := (V, E)$ and $(\sigma, \tau) \in \Aut(\Gamma)$. Then  
\[
  \begin{split}
  \tau s^*(v) 
  &= \{ \tau e \ | \ e \in E, v \in s(e) \}
  = \{ e \in E \ | \ v \in s(\tau^{-1} e) \} \\
  &= \{ e \in E \ | \ v \in \sigma^{-1} s(e) \}
  = \{ e \in E \ | \ \sigma v \in s(e) \} \\
  &= s^*(\sigma v).
  \end{split}
\]
Similarly, one shows $\tau r^*(v) = r^*(\sigma v)$ by replacing $s$ with $r$.
Thus, $(\tau, \sigma) \in \Aut(\Gamma^*)$. Conversely, let $(\tau, \sigma) \in \Aut(\Gamma^*)$.
Then 
\[
  \begin{split}
  \sigma s(e) 
  &= \{ \sigma v \ | \ v \in V, e \in s^*(v) \}
  = \{ v \in V \ | \ e \in s^*(\sigma^{-1} v) \} \\
  &= \{ v \in V \ | \ e \in \tau^{-1} s^*(v) \} 
  = \{ v \in V \ | \ \tau e \in  s^*(v) \} \\
  &= s(\tau e). 
  \end{split}
\]
Again, we can interchange $s$ and $r$ to obtain $\sigma r(e) = r(\tau e)$. 
Hence, $(\sigma, \tau) \in \Aut(\Gamma)$. 
The isomorphism $\Aut(\Gamma) \cong \Aut(\Gamma^*)$ is given by $(\sigma, \tau) \mapsto (\tau, \sigma)$.
\end{proof}

It is again possible to generalize the previous proposition to the case of quantum groups.

\begin{proposition}
Let $\Gamma$ be a hypergraph. Then $\Aut^+(\Gamma) = \Aut^+(\Gamma^*)$.
\end{proposition}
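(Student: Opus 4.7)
The plan is to imitate the argument of \Cref{prop:opposite-isom}: build a $*$-isomorphism between $C(\Aut^+(\Gamma))$ and $C(\Aut^+(\Gamma^*))$ from the universal properties, and check that it intertwines the comultiplications. The only real ingredient beyond bookkeeping is \Cref{prop:star-intertwiner}, which lets us move the adjoint across the intertwiner relations.

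First I would fix notation carefully. Write $\Gamma = (V,E)$ and let $u_V, u_E$ generate $C(\Aut^+(\Gamma))$ as in \Cref{def:hypergraph-qaut}. For the dual hypergraph $\Gamma^* = (E,V)$ the ``vertices'' are the set $E$ and the ``edges'' are the set $V$, so I will denote the generating magic unitaries of $C(\Aut^+(\Gamma^*))$ by $w_E \in \A \otimes \C^{E \times E}$ (the one for the vertex set of $\Gamma^*$) and $w_V \in \A \otimes \C^{V \times V}$ (the one for the edge set of $\Gamma^*$). Directly from \Cref{def:dual-hypergraph}, the incidence matrices of $\Gamma^*$ satisfy $A_{s^*} = A_s^*$ and $A_{r^*} = A_r^*$ as elements of $\C^{E \times V}$.

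The key step is to translate the defining relations. The intertwiner relations of $\Aut^+(\Gamma^*)$ read
\[
  A_s^* \, w_V = w_E \, A_s^*, \qquad A_r^* \, w_V = w_E \, A_r^*.
\]
Applying \Cref{prop:star-intertwiner} with $T = A_s^*$ (resp.\ $T = A_r^*$) shows that these are equivalent to $A_s w_E = w_V A_s$ and $A_r w_E = w_V A_r$. Together with the magic unitary conditions on $w_V$ and $w_E$, this is exactly the set of relations defining $C(\Aut^+(\Gamma))$ with $u_V, u_E$ replaced by $w_V, w_E$. The universal property of $C(\Aut^+(\Gamma))$ therefore yields a unital $*$-homomorphism sending $u_V \mapsto w_V$ and $u_E \mapsto w_E$; running the same argument the other way (again with \Cref{prop:star-intertwiner}) gives its inverse, so the map is a $*$-isomorphism.

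To finish, I would check that this $*$-isomorphism is a morphism of compact quantum groups. This is a direct computation essentially identical to the one in the proof of \Cref{prop:opposite-isom}: the comultiplication formula $\Delta(u_{vw}) = \sum_{x \in V} u_{vx} \otimes u_{xw}$ is transported to $\sum_{x \in V} w_{vx} \otimes w_{xw}$, which is exactly $\Delta(w_{vw})$ since $w_V$ is itself one of the magic unitaries indexed over $V$ inside $\Aut^+(\Gamma^*)$, and symmetrically for $u_E \mapsto w_E$. I do not expect any substantive obstacle here; the only thing to be careful about is the bookkeeping in the duality, namely that under the identification of the underlying $C^*$-algebras the $V$-indexed magic unitary of $\Aut^+(\Gamma)$ matches the \emph{edge} magic unitary of $\Aut^+(\Gamma^*)$ and vice versa (which is why the isomorphism is one of compact quantum groups, not of compact matrix quantum groups in the strict sense that preserves the fundamental representation).
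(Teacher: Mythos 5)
Your proposal is correct and follows essentially the same route as the paper: both identify $A_{s^*} = A_s^*$ and $A_{r^*} = A_r^*$ from \Cref{def:dual-hypergraph}, use \Cref{prop:star-intertwiner} to see that the defining relations of the two universal $C^*$-algebras coincide (with the roles of the vertex- and edge-indexed magic unitaries swapped), and then verify compatibility with the comultiplication as in \Cref{prop:opposite-isom}. Your closing remark about the fundamental representations not being preserved is exactly why the paper works with morphisms of compact quantum groups rather than of compact matrix quantum groups.
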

\begin{proof}
Let $\Gamma := (V, E)$ and denote with $u$ the fundamental representation
of $\Aut^+(\Gamma)$ and with $\widehat{u}$ the fundamental representation of 
$\Aut^+(\Gamma^*)$. We begin by constructing a $*$-isomorphism
$\varphi \colon C(\Aut^+(\Gamma)) \to C(\Aut^+(\Gamma^*))$ with 
\[
  \varphi(u_{vw}) = \widehat{u}_{uw} \quad \forall v,w\in V, \qquad
  \varphi(u_{ef}) = \widehat{u}_{ef} \quad \forall e,f\in E
\]
using the universal properties of $C(\Aut^+(\Gamma))$ and $C(\Aut^+(\Gamma^*))$.
Hence, we have to show that entries of $u$ and $\widehat{u}$ satisfy the same relations.
Since $u_V$, $u_E$, $\widehat{u}_E$ and $\widehat{u}_V$ are all magic unitaries, 
it only remains to show that 
\[
  A_s u^{(1)} = u^{(2)} A_s \quad \Longleftrightarrow \quad A_{s^*} u^{(2)} = u^{(1)} A_{s^*},
\]
\[
  A_r u^{(1)} = u^{(2)} A_r \quad \Longleftrightarrow \quad A_{r^*} u^{(2)} = u^{(1)} A_{r^*}
\]
for two arbitrary magic unitaries $u^{(1)}$ and $u^{(2)}$ indexed by $V$ and $E$ respectively.
However, this follows directly from \Cref{prop:star-intertwiner}, since
\[
  {(A_{s^*})}_{ev}
  = \begin{cases}
    1 & \text{if $e \in s^*(v)$} \\
    0 & \text{otherwise}
  \end{cases}
  = \begin{cases}
    1 & \text{if $v \in s(e)$} \\
    0 & \text{otherwise}
  \end{cases}
  = {(A_s)}_{ve} = {(A_s^T)}_{ev}
\]
for all $e \in E$ and $v \in V$,
such that $A_{s^*} = A_s^T = A_s^*$ and similarly $A_{r^*} = A_r^*$.
Thus, the $*$-isomorphism $\varphi$ exists and it is a morphism
of compact quantum groups, since
\[
  \Delta(\varphi(u_{v w})) 
  = \sum_{x \in V} \widehat{u}_{v x} \widehat{u}_{x w}
  = \sum_{x \in V} \varphi(u_{v x}) \varphi(u_{x w})
  = (\varphi \otimes \varphi)( \Delta(u_{v w}) )
\]
for all $v, w \in V$ and $\Delta(\varphi(u_{e f})) = (\varphi \otimes \varphi)( \Delta(u_{e f}) )$ for all $e, f \in E$ by the same computation.
\end{proof}

\subsection{Hypergraphs without multi-edges}

In contrast to the quantum automorphism group of graphs by Bichon and Banica,
our quantum automorphism group includes an additional magic unitary $u_E$ for the edges.
This magic unitary is necessary to capture quantum symmetries between multi-edges, see for example the 
family of hypergraphs in \Cref{sec:example-max-qsym}. However, it turns out that if a hypergraph has no multi-edges,
then the magic unitary $u_E$ is redundant and we can express the entries of $u_E$ in terms of the entries of $u_V$.

We begin with the following lemma, which relates the entries 
of $u_E$ with the entries of $u_V$.

\begin{lemma}\label{lem:rep-edges-1}
Let $\Gamma := (V, E)$ be a hypergraph and $X \subseteq V$. Then
\[
  \sum_{\substack{f \in E \\ X \subseteq s(f)}} u_{fe}
  = \prod_{v \in X} \sum_{\substack{w \in V \\ w \in s(e)}} u_{vw}
  \quad
  \forall e \in E.
\]
In particular, the product commutes. The same statement also holds for the range map $r$.
\end{lemma}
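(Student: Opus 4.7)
The plan is to rewrite the right-hand side using the intertwiner relations and then exploit the magic-unitary structure of $u_E$ to collapse products. For fixed $e \in E$, set
\[
  p_v := \sum_{\substack{w \in V \\ w \in s(e)}} u_{vw} \qquad (v \in V).
\]
By the intertwiner relation $A_s u_E = u_V A_s$ (see \Cref{rem:hypergraph-qaut-rels}), I can equally well write $p_v = \sum_{f \in E,\, v \in s(f)} u_{fe}$. The key observation is that, for any single edge $f_0 \in E$,
\[
  p_v \cdot u_{f_0 e} \;=\; \sum_{\substack{f \in E \\ v \in s(f)}} u_{fe} u_{f_0 e} \;=\; \begin{cases} u_{f_0 e} & \text{if } v \in s(f_0), \\ 0 & \text{otherwise,} \end{cases}
\]
because $\{u_{fe}\}_{f \in E}$ lie in a common column of the magic unitary $u_E$ and therefore satisfy the mutual-orthogonality identity $u_{fe} u_{f_0 e} = \delta_{f f_0} u_{fe}$.

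Starting from $1 = \sum_{f_0 \in E} u_{f_0 e}$ (magic unitary for $u_E$), I would iteratively apply this identity: for any enumeration $v_1, \dots, v_k$ of the elements of $X$,
\[
  p_{v_1} p_{v_2} \cdots p_{v_k} \;=\; \sum_{f_0 \in E} p_{v_1} \cdots p_{v_k} \cdot u_{f_0 e} \;=\; \sum_{\substack{f_0 \in E \\ X \subseteq s(f_0)}} u_{f_0 e},
\]
where the last step peels off the factors $p_{v_k}, p_{v_{k-1}}, \dots, p_{v_1}$ one at a time, each contributing the indicator that $v_i \in s(f_0)$. This simultaneously yields the claimed equality and shows that the product commutes, since the resulting expression on the right is manifestly independent of the chosen ordering of $X$.

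The range-map statement is proved verbatim, replacing $s$ with $r$ throughout and using the intertwiner $A_r u_E = u_V A_r$ in place of $A_s u_E = u_V A_s$. No real obstacle arises: the whole argument rests only on the intertwiner relation and the column-orthogonality of entries of $u_E$, both of which are built into \Cref{def:hypergraph-qaut}.
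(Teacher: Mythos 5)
Your proof is correct and follows essentially the same route as the paper: both arguments convert $\sum_{w \in s(e)} u_{vw}$ into $\sum_{f:\, v \in s(f)} u_{fe}$ via the intertwiner relation and then collapse the product using the column-orthogonality $u_{fe}u_{f_0e} = \delta_{ff_0}u_{fe}$ of the magic unitary $u_E$, with order-independence of the resulting sum giving the commutativity claim. The only cosmetic difference is bookkeeping (you insert $1 = \sum_{f_0} u_{f_0 e}$ and peel factors off one at a time, while the paper expands the product of sums into a multi-sum and collapses it all at once).
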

\begin{proof}
Let $e \in E$ and $X = \{v_1, \ldots, v_k \}$, where all $v_i$ are distinct. Then 
\[
  \prod_{v \in X} {(A_s u_E)}_{v e}
  = \prod_{v \in X} \sum_{\substack{f \in E \\ v \in s(f)}} u_{fe}
  = \sum_{\substack{f_1 \in E \\ v_1 \in s(f_1)}} \cdots \sum_{\substack{f_k \in E \\ v_k \in s(f_k)}} u_{f_1 e} \cdots u_{f_k e}.
\]
Since 
\[
  u_{f_1 e} \cdots u_{f_k e} = \delta_{f_1 f_2} \cdots \delta_{f_1 f_k} u_{f_1 e},
\]
it follows that 
\[
  \prod_{v \in X} {(A_s u_E)}_{v e} 
  = \sum_{\substack{f \in E \\ v_1 \in s(f), \ldots, v_k \in s(f)}} u_{f e}
  = \sum_{\substack{f \in E \\ X \subseteq s(f)}} u_{fe}.
\]
On the other hand, we can apply $A_s u_E = u_V A_s$ to the original expression to obtain
\[
  \prod_{v \in X} {(A_s u_E)}_{v e} 
  = \prod_{v \in X} {(u_V A_s)}_{v e}
  = \prod_{v \in X} \sum_{\substack{w \in V \\ w \in s(e)}} u_{vw}.
\]
Thus, 
\[
  \sum_{\substack{f \in E \\ X \subseteq s(f)}} u_{fe}
  = \prod_{v \in X} \sum_{\substack{w \in V \\ w \in s(e)}} u_{vw}.
\]
Since the left side does not depend on the order of $X$, the product on the right side commutes.
Further, we can replace $s$ with $r$ to obtain a proof of the corresponding statement for the range map $r$.
\end{proof}

By using an inclusion-exclusion argument, we can further 
improve the previous lemma to obtain the equality $X = s(f)$
instead of $X \subseteq s(f)$ on the left hand side.

\begin{lemma}\label{lem:rep-edges-inc-exc}
Let $\Gamma := (V, E)$ be a hypergraph and $X \subseteq V$. Then
\[
  \sum_{\substack{f \in E \\ s(f) = X}} u_{fe} 
  = \sum_{X \subseteq Y \subseteq V} {(-1)}^{\abs{Y} - \abs{X}} \prod_{v \in Y} \sum_{\substack{w \in V \\ w \in s(e)}}  u_{vw}
  \quad
  \forall e \in E.
\]
The same statement also holds for the range map $r$.
\end{lemma}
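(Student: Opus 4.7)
The plan is to recognize this as a standard Möbius inversion on the Boolean lattice of subsets of $V$, starting from \Cref{lem:rep-edges-1}. Define, for a fixed $e \in E$ and each $X \subseteq V$,
\[
  a(X) := \sum_{\substack{f \in E \\ s(f) = X}} u_{fe},
  \qquad
  b(X) := \sum_{\substack{f \in E \\ X \subseteq s(f)}} u_{fe}.
\]
The target identity asks to express $a(X)$ in terms of the quantities $\prod_{v \in Y} \sum_{w \in s(e)} u_{vw}$, and by \Cref{lem:rep-edges-1} each such product equals $b(Y)$. So the claim reduces to
\[
  a(X) = \sum_{X \subseteq Y \subseteq V} {(-1)}^{\abs{Y} - \abs{X}} b(Y).
\]

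First I would establish the relation $b(X) = \sum_{Y \supseteq X} a(Y)$, where the sum runs over all subsets $Y$ of $V$ containing $X$. This is obtained by partitioning the edges $f$ contributing to $b(X)$ according to the value of $s(f)$, which is some $Y \supseteq X$; subsets $Y$ for which no edge has $s(f) = Y$ simply contribute $a(Y) = 0$ and may safely be included. Then I would apply Möbius inversion for the Boolean lattice ${(\PP(V), \subseteq)}$, whose Möbius function is ${\mu(X, Y) = (-1)^{\abs{Y}-\abs{X}}}$ for $X \subseteq Y$. This directly yields the displayed formula for $a(X)$.

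Finally, I would substitute the expression from \Cref{lem:rep-edges-1} for each $b(Y)$ to obtain the stated identity. For the range map $r$, the same argument goes through verbatim after replacing $s$ with $r$, invoking the second assertion of \Cref{lem:rep-edges-1}.

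I do not anticipate any real obstacle: the only subtle point is making sure that extending the inner sum over all $Y \subseteq V$ (rather than just over sets realized as $s(f)$ for some edge $f$) introduces only vanishing terms, which is immediate from the definition of $a(Y)$. The combinatorial identity underlying the inversion is the elementary fact that $\sum_{X \subseteq Z \subseteq Y} (-1)^{\abs{Z}-\abs{X}} = \delta_{X,Y}$, and for completeness I might include a one-line check of it rather than invoke Möbius inversion by name.
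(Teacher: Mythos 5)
Your proposal is correct and is essentially the paper's own argument: the paper likewise uses \Cref{lem:rep-edges-1} to reduce the claim to
$\sum_{f:\, s(f)=X} u_{fe} = \sum_{X \subseteq Y \subseteq V} (-1)^{\abs{Y}-\abs{X}} \sum_{f:\, Y \subseteq s(f)} u_{fe}$
and then verifies it by computing the coefficient of each $u_{fe}$ via $\sum_{\ell=0}^{k}\binom{k}{\ell}(-1)^{\ell}=\delta_{k0}$, which is exactly the inclusion--exclusion identity underlying your Möbius inversion. The only difference is presentational: you package the coefficient count as Möbius inversion on the Boolean lattice, while the paper carries out the same count directly.
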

\begin{proof}
By rewriting the statement using \Cref{lem:rep-edges-1}, we have to show that 
\[
  \sum_{\substack{f \in E \\ s(f) = X}} u_{fe} 
  = 
  \sum_{X \subseteq Y \subseteq V} {(-1)}^{\abs{Y} - \abs{X}}
  \sum_{\substack{f \in E \\ Y \subseteq s(f)}} u_{fe}
  \quad 
  \forall e \in E.
\]
Now, consider an element $u_{fe}$ with $X \subseteq s(f)$
and define $k := |s(f)| - |X|$.
Then there are exactly
$\binom{k}{\ell}$ subsets $Y$ with $X \subseteq Y \subseteq s(f)$ and 
$|X| + \ell$ elements. Further, each is weighted with a 
factor of 
\[
  {(-1)}^{|Y|-|X|} = {(-1)}^{(|X| + \ell) - |X|} = {(-1)}^{\ell}
\]
on the right side of the equation. Thus, by the binomial theorem, the total
contribution of $u_{ef}$ on the right side is
\[
  \sum_{\ell = 0}^k \binom{k}{\ell} {(-1)}^{\ell} 
  = {((-1) + 1)}^k 
  = \begin{cases}
    1 & \text{if $k = 0$}, \\
    0 & \text{if $k > 0$}.
  \end{cases}
\]
Therefore, the right side of the 
equation contains exactly each $u_{fe}$ with $k = 0$, which 
is equivalent to $s(f) = X$. 
The corresponding statement for the range map $r$ can be proven in the same way.
\end{proof}

The previous lemma can now be used to show that 
the elements of $u_E$ can be expressed in terms of the elements of $u_V$ 
if the underlying hypergraph has no multi-edges.

\begin{theorem}\label{thm:gens-no-multi-edges}
Let $\Gamma:= (V, E)$ be a
hypergraph without multi-edges. 
Denote with $C^*(u_V)$ the $C^*$-algebra generated by $u_{vw}$ for all $v, w\in V$.
Then 
\[
  u_{ef} \in C^*(u_V) \quad \forall e,f \in E.
\]
\end{theorem}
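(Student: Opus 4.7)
The plan is to isolate individual generators of $u_E$ by combining the inclusion–exclusion identities of \Cref{lem:rep-edges-inc-exc} for both the source and the range map. First, I would observe that \Cref{lem:rep-edges-inc-exc}, applied separately to $s$ and to $r$, already gives
\[
\sum_{\substack{f \in E \\ s(f) = X}} u_{fe} \in C^*(u_V), \qquad \sum_{\substack{f \in E \\ r(f) = Y}} u_{fe} \in C^*(u_V)
\]
for every $X, Y \subseteq V$ and every $e \in E$, since both are alternating sums of products of entries of $u_V$.

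Next, I would use that for fixed $e$ the family $\{u_{fe}\}_{f \in E}$ consists of projections summing to $1$, hence is pairwise orthogonal and satisfies $u_{fe} u_{f'e} = \delta_{ff'} u_{fe}$. Multiplying the two partial sums from the previous step then collapses to a sum over the intersection of conditions:
\[
\left( \sum_{\substack{f \in E \\ s(f) = X}} u_{fe} \right) \left( \sum_{\substack{f' \in E \\ r(f') = Y}} u_{f'e} \right) = \sum_{\substack{f \in E \\ s(f) = X,\, r(f) = Y}} u_{fe},
\]
and the left-hand side lies in $C^*(u_V)$. At this point the no-multi-edge hypothesis enters: for a given $f_0 \in E$, choosing $X := s(f_0)$ and $Y := r(f_0)$ leaves at most one edge in the range of summation, namely $f_0$ itself, so the right-hand side reduces to the single term $u_{f_0 e}$. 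Since $e$ and $f_0$ are arbitrary, this shows $u_{f_0 e} \in C^*(u_V)$, and after relabeling the indices we obtain $u_{ef} \in C^*(u_V)$ for all $e, f \in E$.

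The argument is essentially straightforward once \Cref{lem:rep-edges-inc-exc} is in hand, so the only subtle point I expect is the reliance on orthogonality: the inclusion–exclusion identities alone give only partial sums indexed by a prescribed source \emph{or} range set, and one genuinely needs the magic unitary structure of $u_E$ together with the no-multi-edge assumption in order to pin down a single entry $u_{fe}$.
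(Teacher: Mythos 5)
Your proposal is correct and is essentially identical to the paper's own proof: both apply \Cref{lem:rep-edges-inc-exc} to the source and range maps separately, multiply the two resulting partial sums, use the orthogonality of the projections in a column of the magic unitary $u_E$ to collapse the product to a sum over edges with prescribed source \emph{and} range, and then invoke the no-multi-edge hypothesis to reduce that sum to a single entry. No gaps; the argument goes through as you describe.
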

\begin{proof}
Let $e, f \in E$. Then 
\[
  \sum_{\substack{g \in E \\ s(g) = s(e)}} u_{gf},
  \sum_{\substack{g \in E \\ r(g) = r(e)}} u_{gf}
  \in C^*(u_V)
\]
by chosing $X = s(e)$ and $X = r(e)$ in \Cref{lem:rep-edges-inc-exc}. This implies
\[
  \bigg( \sum_{\substack{g \in E \\ s(g) = s(e)}} u_{gf} \bigg)
  \bigg( \sum_{\substack{g \in E \\ r(g) = r(e)}} u_{gf} \bigg)
  =
  \sum_{\substack{g_1, g_2 \in E \\ s(g_1) = s(e) \\ r(g_2) = r(e)}}
   \underbrace{u_{g_1 e} u_{g_2 e}}_{\delta_{g_1 g_2} u_{g_1 f}} 
  = 
  \sum_{\substack{g \in E \\ s(g) = s(e) \\ r(g) = r(e)}} u_{gf} 
  \in C^*(u_V).
\]
However, $\Gamma$ has no multi-edges, such that 
\[
  u_{ef} = \sum_{\substack{g \in E \\ s(g) = s(e) \\ r(g) = r(e)}} u_{gf} 
  \in C^*(u_V).
\]
 
\end{proof}

Translating the previous theorem to the setting of quantum groups yields the following two corollaries.

\begin{corollary}\label{corr:subgroup-SV}
Let $\Gamma:= (V, E)$ be a hypergraph without multi-edges. Then 
\[
  \Aut^+(\Gamma) \subseteq S_V^+.
\]
\end{corollary}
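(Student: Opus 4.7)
The plan is to exhibit $\Aut^+(\Gamma)$ as a subgroup of $S_V^+$ by producing a surjective morphism of compact quantum groups $\varphi \colon C(S_V^+) \to C(\Aut^+(\Gamma))$. Denote by $\widehat{u}$ the fundamental representation of $S_V^+$ and by $u_V$, $u_E$ the two blocks of the fundamental representation of $\Aut^+(\Gamma)$. Since $u_V$ is a magic unitary in $C(\Aut^+(\Gamma))$, the universal property of $C(S_V^+)$ immediately yields a unital $*$-homomorphism $\varphi$ sending $\widehat{u}_{vw} \mapsto u_{vw}$ for all $v, w \in V$.

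The key step is surjectivity, and this is exactly where the no-multi-edges hypothesis enters. By definition, $C(\Aut^+(\Gamma))$ is generated by the entries of both $u_V$ and $u_E$. However, the preceding \Cref{thm:gens-no-multi-edges} states that $u_{ef} \in C^*(u_V)$ for all $e, f \in E$ whenever $\Gamma$ has no multi-edges. Therefore $C(\Aut^+(\Gamma)) = C^*(u_V)$, and $\varphi$ hits every generator.

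Finally, I would check that $\varphi$ is a morphism of compact quantum groups, i.e.\ that $(\varphi \otimes \varphi) \circ \Delta_{S_V^+} = \Delta_{\Aut^+(\Gamma)} \circ \varphi$. This only needs to be verified on the generators $\widehat{u}_{vw}$, where both sides evaluate to $\sum_{x \in V} u_{vx} \otimes u_{xw}$ by the defining formulas of the two comultiplications. Hence $\Aut^+(\Gamma) \subseteq S_V^+$.

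There is no real obstacle here beyond invoking \Cref{thm:gens-no-multi-edges}; the substantive content of the corollary has already been carried out in that theorem, and the present argument reduces to a routine application of universal properties.
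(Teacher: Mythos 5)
Your proposal is correct and follows the paper's proof exactly: build the morphism $\varphi \colon C(S_V^+) \to C(\Aut^+(\Gamma))$ from the universal property since $u_V$ is a magic unitary, verify compatibility with the comultiplications on generators, and derive surjectivity from \Cref{thm:gens-no-multi-edges}. The paper merely delegates the construction and morphism check to the proof of \Cref{prop:free-prod-subgroup}, which you spell out explicitly.
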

\begin{proof}
Denote with $\textit{u}$ the fundamental representation of $\Aut^+(\Gamma)$
and with $\widehat{u}$ the fundamental representation of $S_V^+$.
As in the proof of \Cref{prop:free-prod-subgroup}, the exists a morphism
of compact quantum groups $\varphi \colon C(S_V^+) \to \Aut^+(\Gamma)$, 
\[
    \widehat{u}_{vw} \mapsto u_{vw} \quad \forall v, w \in V.
\]
By \Cref{thm:gens-no-multi-edges}, this morphism is surjective, such that 
$\Aut^+(\Gamma) \subseteq S_V^+$.
\end{proof}

\begin{corollary}\label{corr:subgroup-SE}
Let $\Gamma:= (V, E)$ be a hypergraph such that $\Gamma^*$ has no multi-edges. Then 
\[
  \Aut^+(\Gamma) \subseteq S_E^+.
\]
\end{corollary}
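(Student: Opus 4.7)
The plan is to reduce this corollary to \Cref{corr:subgroup-SV} by invoking the self-dual behaviour of the quantum automorphism group established earlier. Recall that $\Gamma^* = (E, V)$, so the vertex set of $\Gamma^*$ is $E$. If $\Gamma^*$ has no multi-edges, then \Cref{corr:subgroup-SV} applied to $\Gamma^*$ (rather than $\Gamma$) directly gives
\[
  \Aut^+(\Gamma^*) \subseteq S_E^+.
\]

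The remaining step is to identify $\Aut^+(\Gamma)$ with $\Aut^+(\Gamma^*)$. This is precisely the content of the proposition proved in \Cref{sec:opp-dual-hypergraphs}, which exhibits a $*$-isomorphism $C(\Aut^+(\Gamma)) \to C(\Aut^+(\Gamma^*))$ respecting the comultiplication, induced by the observation that $A_{s^*} = A_s^*$ and $A_{r^*} = A_r^*$ so that the defining relations of the two $C^*$-algebras coincide after swapping the roles of $u_V$ and $u_E$. Composing this isomorphism with the surjection $C(S_E^+) \twoheadrightarrow C(\Aut^+(\Gamma^*))$ from \Cref{corr:subgroup-SV} produces a surjective morphism of compact quantum groups $C(S_E^+) \twoheadrightarrow C(\Aut^+(\Gamma))$, i.e.\ $\Aut^+(\Gamma) \subseteq S_E^+$.

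There is essentially no obstacle here: both ingredients are already in hand, and the argument is a one-line reduction via the duality $\Aut^+(\Gamma) = \Aut^+(\Gamma^*)$. The only minor bookkeeping is to make sure that the asymmetry between the ``no multi-edges'' condition on $\Gamma$ versus on $\Gamma^*$ is matched correctly with the asymmetry between $S_V^+$ and $S_E^+$; but since dualizing interchanges $V$ with $E$, this matches up automatically.
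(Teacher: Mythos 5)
Your proposal is correct and follows exactly the paper's own argument: apply \Cref{corr:subgroup-SV} to the dual hypergraph $\Gamma^*$ (whose vertex set is $E$) and then transfer the conclusion through the isomorphism $\Aut^+(\Gamma) = \Aut^+(\Gamma^*)$ established in \Cref{sec:opp-dual-hypergraphs}. Nothing further is needed.
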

\begin{proof}
By combining \Cref{prop:dual-isom} with \Cref{corr:subgroup-SV}, we obtain 
\[
  \Aut^+(\Gamma) = \Aut^+(\Gamma^*) \subseteq S_E^+.
\]
\end{proof}

\section{Link to quantum symmetries of classical graphs}\label{sec:qsym-graphs}

In this section, we study the quantum
automorphism group $\Aut^+(\Gamma)$ for hypergraphs
which come from a classical directed, simple or multigraph
as defined in \Cref{sec:prelim-graphs-hypergraphs}.
In particular, we show that in this case our quantum automorphism group for hypergraphs
agrees with the quantum automorphism group of Bichon for classical graphs
or its multigraph version by Goswami-Hossain.
Thus, we can view our quantum automorphism group for hypergraphs as a generalization 
of the quantum automorphism group by Bichon for graphs.

\subsection{Directed graphs}

We begin with directed graphs as in \Cref{def:directed-graph}. 
Recall from \Cref{def:dir-graph-to-hypergraph} that we can identify a directed graph $\Gamma := (V, E)$ with 
a $1$-uniform hypergraph without multi-edges by defining the source and range maps
\[
  s(v, w) := \{ v \}, \quad r(v, w) = \{ w \}  
  \quad \forall (v, w) \in E.
\]
In this way, we can apply \Cref{def:hypergraph-qaut} to obtain a hypergraph quantum 
automorphism group $\Aut^+(\Gamma)$. 
In the following, we show that $\Aut^+(\Gamma)$ agrees with the quantum automorphism group 
$\AutBichon(\Gamma)$ by Bichon.

We begin by reformulating the intertwiner relations $A_s u_E = u_V A_s$ and $A_r u_E = u_V A_r$
in the setting of directed graphs.

\begin{lemma}\label{lem:directed-graph-rel}
Let $\Gamma := (V, E)$ be a directed graph, 
$\A$ a unital $C^*$-algebra and
\[
  u_V \in \A \otimes \C^{V \times V},
  \quad 
  u_E \in \A \otimes \C^{E \times E}.
\]
Then the relations 
$A_s u_E = u_V A_s$ and $A_r u_E = u_V A_r$ are equivalent to
\[
  \sum_{\substack{(v_2, w_2) \in E \\ v_0 = v_2}} u_{(v_2, w_2)(v_1, w_1)} = u_{v_0 v_1},
  \qquad
  \sum_{\substack{(v_2, w_2) \in E \\ v_0 = w_2}} u_{(v_2, w_2)(v_1, w_1)} = u_{v_0 w_1}
\]
for all $v_0 \in V$ and $(v_1, w_1) \in E$.
\end{lemma}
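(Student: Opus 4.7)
The plan is to unwind the intertwiner relations $A_s u_E = u_V A_s$ and $A_r u_E = u_V A_r$ entry by entry, using the fact that under the encoding of \Cref{def:dir-graph-to-hypergraph} every edge has singleton source and range sets. The starting point is the entry-wise reformulation already recorded in \Cref{rem:hypergraph-qaut-rels}, namely
\[
  \sum_{\substack{f \in E \\ v_0 \in s(f)}} u_{f e} = \sum_{\substack{w \in V \\ w \in s(e)}} u_{v_0 w}
  \qquad \forall v_0 \in V,\, e \in E,
\]
together with the analogous identity for the range map. Since these are literal restatements of $A_s u_E = u_V A_s$ and $A_r u_E = u_V A_r$ in scalar form, they yield equivalences in both directions for any unital $C^*$-algebra $\A$ and any matrices $u_V$, $u_E$, without requiring $u_V$ or $u_E$ to be unitary.

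Next I would specialize to an edge $e = (v_1, w_1) \in E$. Because $s((v_2, w_2)) = \{ v_2 \}$, the condition $v_0 \in s((v_2, w_2))$ collapses to $v_0 = v_2$, so the left-hand side of the source relation becomes $\sum_{(v_2, w_2) \in E,\, v_0 = v_2} u_{(v_2, w_2)(v_1, w_1)}$. On the right-hand side, $s(e) = \{ v_1 \}$ is a singleton, so the sum over $w$ reduces to the single term $u_{v_0 v_1}$. This is the first identity claimed in the statement.

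The second identity follows from exactly the same substitution applied to the range-map intertwiner, using $r((v_2, w_2)) = \{ w_2 \}$ and $r(e) = \{ w_1 \}$. There is no real obstacle here beyond keeping the two index conventions (a vertex $v_0$ versus the two vertex components of an edge) straight; the lemma is essentially the transcription of \Cref{rem:hypergraph-qaut-rels} in the special case where $\abs{s(e)} = \abs{r(e)} = 1$, and both directions of the equivalence are obtained in a single calculation.
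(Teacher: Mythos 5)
Your proposal is correct and follows essentially the same route as the paper: both compute the $(v_0, e)$-entries of $A_s u_E$, $u_V A_s$ (and their range analogues) and use that under the encoding of \Cref{def:dir-graph-to-hypergraph} every source and range set is a singleton, so each sum collapses to the claimed form. Your explicit remark that the entry-wise equivalence needs no unitarity of $u_V$ or $u_E$ is a correct and welcome clarification, since the lemma is stated for arbitrary matrices.
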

\begin{proof}
Let $v_0 \in V$, $e := (v_1, w_1) \in E$. Then
\begin{align*}
  {(A_s u_E)}_{v_0 e} &= \sum_{\substack{f \in E \\ v_0 \in s(f)}} u_{fe} = \sum_{\substack{(v_2, w_2) \in E \\ v_0 = v_2}} u_{(v_2, w_2)(v_1, w_1)}, \\
  {(A_r u_E)}_{v_0 e} &= \sum_{\substack{f \in E \\ v_0 \in r(f)}} u_{fe} = \sum_{\substack{(v_2, w_2) \in E \\ v_0 = w_2}} u_{(v_2, w_2)(v_1, w_1)}.
\end{align*}
On the other hand, the image of $s$ and $r$ always contains one element
and there are no multi-edges, 
such that
\begin{align*}
  {(u_V A_s)}_{v_0 e} &= \sum_{\substack{w \in V \\ w \in s(e)}} u_{v_0 w} = u_{v_0 v_1},\\
  {(u_V A_r)}_{v_0 e} &= \sum_{\substack{w \in V \\ w \in r(e)}} u_{v_0 w} = u_{v_0 w_1}.
\end{align*}
Thus, $A_s u_E = u_V A_s$ and $A_r u_E = u_V A_r$ are equivalent to 
\[
  \sum_{\substack{(v_2, w_2) \in E \\ v_0 = v_2}} u_{(v_2, w_2)(v_1, w_1)} = u_{v_0 v_1},
  \qquad
  \sum_{\substack{(v_2, w_2) \in E \\ v_0 = w_2}} u_{(v_2, w_2)(v_1, w_1)} = u_{v_0 w_1}.
\]
for all $v_0 \in V$ and $(v_1, w_1) \in E$.
\end{proof}

Using the previous lemma, we can now express the entries 
of $u_E$ in terms of the entries of $u_V$.

\begin{lemma}\label{lem:bichon-E}
Let $\Gamma := (V, E)$ be a directed graph and denote
with $u$ the fundamental representation of $\Aut^+(\Gamma)$. Then
\[
  u_{(v_1, w_1) (v_2, w_2)} 
  = u_{v_1 v_2} u_{w_1 w_2} 
  = u_{w_1 w_2} u_{v_1 v_2}
  \quad 
  \forall (v_1, w_1), (v_2, w_2) \in E.
\]
\end{lemma}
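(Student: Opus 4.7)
The plan is to use \Cref{lem:directed-graph-rel} to express the two vertex generators $u_{v_1 v_2}$ and $u_{w_1 w_2}$ as sums of edge generators sharing a common column, then use the magic unitary relations of $u_E$ together with the fact that $\Gamma$ (viewed as a hypergraph) has no multi-edges.

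First, I would fix a target edge $(v_2, w_2) \in E$ and apply \Cref{lem:directed-graph-rel} in two ways. With the lemma's vertex parameter equal to $v_1$ and its edge parameter equal to $(v_2, w_2)$, the first identity gives
\[
  \sum_{\substack{(a, b) \in E \\ a = v_1}} u_{(a, b)(v_2, w_2)} = u_{v_1 v_2},
\]
and with vertex parameter $w_1$ the second identity gives
\[
  \sum_{\substack{(a, b) \in E \\ b = w_1}} u_{(a, b)(v_2, w_2)} = u_{w_1 w_2}.
\]

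Next, I would multiply these two sums. Since $u_E$ is a magic unitary, any two entries in the same column are orthogonal projections, so $u_{(a, b)(v_2, w_2)} u_{(a', b')(v_2, w_2)} = \delta_{(a, b),(a', b')}\, u_{(a, b)(v_2, w_2)}$. Therefore the double sum collapses to a single sum over edges $(a, b) \in E$ satisfying both $a = v_1$ and $b = w_1$. Because $\Gamma$ has no multi-edges (it is $1$-uniform without multi-edges as a hypergraph), there is at most one such edge, and by assumption $(v_1, w_1) \in E$, so exactly one: namely $(v_1, w_1)$ itself. This yields $u_{v_1 v_2} u_{w_1 w_2} = u_{(v_1, w_1)(v_2, w_2)}$.

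For the second equality, I would run the same argument with the order of multiplication reversed: multiplying the second identity by the first on the right and again collapsing via magic unitarity produces the same single term $u_{(v_1, w_1)(v_2, w_2)}$. This shows $u_{w_1 w_2} u_{v_1 v_2} = u_{(v_1, w_1)(v_2, w_2)}$ as well, giving commutation as a byproduct. There is no real obstacle; the only subtlety is keeping the indices straight when invoking \Cref{lem:directed-graph-rel}, and remembering that the no-multi-edges hypothesis (implicit in the hypergraph encoding of a directed graph) is what forces the final sum to reduce to one term.
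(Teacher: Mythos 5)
Your proposal is correct and follows essentially the same route as the paper's proof: expand $u_{v_1 v_2}$ and $u_{w_1 w_2}$ via \Cref{lem:directed-graph-rel}, collapse the product using column orthogonality of the magic unitary $u_E$, and invoke the absence of multi-edges to identify the single surviving term as $u_{(v_1,w_1)(v_2,w_2)}$. The reversed-order computation for the commutation is also handled exactly as in the paper.
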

\begin{proof}
Let $(v_1, w_1), (v_2, w_2) \in E$. By \Cref{lem:directed-graph-rel}, we have 
\[
  u_{v_1 v_2} = \sum_{\substack{(v_3, w_3) \in E \\ v_1 = v_3}} u_{(v_3, w_3)(v_2, w_2)}, \qquad 
  u_{w_1 w_2} = \sum_{\substack{(v_4, w_4) \in E \\ w_1 = w_4}} u_{(v_4, w_4)(v_2, w_2)}
\]
which yields
\begin{align*}
  u_{v_1 v_2} u_{w_1 w_2} 
  &= 
  \sum_{\substack{(v_3, w_3) \in E \\ v_1 = v_3}} 
  \sum_{\substack{(v_4, w_4) \in E \\ w_1 = w_4}} 
  \underbrace{u_{(v_3, w_3)(v_2, w_2)} u_{(v_4, w_4)(v_2, w_2)}}_{\delta_{(v_3, w_3)(v_4, w_4)} u_{(v_3, w_3)(v_2, w_2)}}
  = 
  \sum_{\substack{(v_3, w_3) \in E \\ v_1 = v_3 \\ w_1 = w_3}} 
  u_{(v_3, w_3)(v_2, w_2)},
  \\
  u_{w_1 w_2} u_{v_1 v_2} 
  &= 
  \sum_{\substack{(v_4, w_4) \in E \\ w_1 = w_4}} 
  \sum_{\substack{(v_3, w_3) \in E \\ v_1 = v_3}} 
  \underbrace{ u_{(v_4, w_4)(v_2, w_2)} u_{(v_3, w_3)(v_2, w_2)}}_{\delta_{(v_4, w_4)(v_3, w_3)} u_{(v_3, w_3)(v_2, w_2)}} 
  = 
  \sum_{\substack{(v_3, w_3) \in E \\ v_1 = v_3 \\ w_1 = w_3}} 
  u_{(v_3, w_3)(v_2, w_2)}.
\end{align*}
Since $\Gamma$ has no multi-edges, we have
\[
  u_{(v_1, w_1) (v_2, w_2)}
  = 
  \sum_{\substack{(v_3, w_3) \in E \\ v_1 = v_3 \\ w_1 = w_3}} 
  u_{(v_3, w_3)(v_2, w_2)}.
\]
Thus,
\[
  u_{(v_1, w_1) (v_2, w_2)} = u_{v_1 v_2} u_{w_1 w_2} = u_{w_1 w_2} u_{v_1 v_2}.
\]
\end{proof}

Now, we can show that our quantum automorphism group $\Aut^+(\Gamma)$ agrees 
with the quantum automorphism group  $\AutBichon(\Gamma)$ when we identify
the magic unitary $u_V$ with the fundamental representation of $\AutBichon(\Gamma)$.
  
\begin{theorem}\label{thm:directed-graph-bichon}
Let $\Gamma$ be a directed graph. Then $\Aut^+(\Gamma) = \AutBichon(\Gamma)$.
\end{theorem}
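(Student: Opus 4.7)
The plan is to construct mutually inverse morphisms of compact quantum groups between $C(\AutBichon(\Gamma))$ and $C(\Aut^+(\Gamma))$. The structural key is \Cref{lem:bichon-E}: in $C(\Aut^+(\Gamma))$ the edge generators are commuting products of vertex generators, so in effect both algebras are generated by a single $V \times V$ magic unitary.

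First, using the universal property of $C(\AutBichon(\Gamma))$, I would build a $*$-homomorphism $\varphi\colon C(\AutBichon(\Gamma)) \to C(\Aut^+(\Gamma))$ sending $u_{vw} \mapsto u_{vw}$ and check the three defining relations of \Cref{def:bichon-qaut}. The magic unitary relations are immediate. For $\Adj u_V = u_V \Adj$, I would observe that, because the hypergraph comes from a directed graph with no multi-edges, the adjacency matrix factorizes as $\Adj = A_s A_r^*$, and then combine $A_s u_E = u_V A_s$ with $A_r^* u_V = u_E A_r^*$, the latter being obtained from $A_r u_E = u_V A_r$ via \Cref{prop:star-intertwiner}. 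For Bichon's commutation relation \ref{item:bichon-qaut-3}, \Cref{lem:bichon-E} directly supplies $u_{v_1 v_2} u_{w_1 w_2} = u_{w_1 w_2} u_{v_1 v_2}$ whenever $(v_1,w_1),(v_2,w_2) \in E$, which is exactly the condition required.

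Second, using the universal property of $C(\Aut^+(\Gamma))$, I would build a candidate inverse $\psi\colon C(\Aut^+(\Gamma)) \to C(\AutBichon(\Gamma))$ sending $u_{vw} \mapsto u_{vw}$ and $u_{(v_1,w_1)(v_2,w_2)} \mapsto u_{v_1 v_2} u_{w_1 w_2}$. The image of $u_E$ has self-adjoint idempotent entries by Bichon's commutation relation, and its row and column sums equal $1$ by \Cref{prop:bic-sum-one}. For the intertwiner relations in the reformulation of \Cref{lem:directed-graph-rel}, a sum such as $\sum_{w_2:(v_0,w_2)\in E} u_{v_0 v_1} u_{w_2 w_1}$ can be extended to the full sum over $w_2 \in V$: since $v_1 \sim w_1$, \Cref{prop:bic-intertwiner-equiv} kills every added term with $v_0 \nsim w_2$, and the full sum collapses via the magic unitary row-sum to $u_{v_0 v_1} \cdot 1 = u_{v_0 v_1}$; the range analogue is identical. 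On generators, \Cref{lem:bichon-E} makes $\varphi$ and $\psi$ mutually inverse, and preservation of the coproduct is a one-line check on the vertex generators since both coproducts have the same magic unitary form.

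The main obstacle will be the well-definedness of $\psi$: verifying that $\psi(u_E)$ genuinely defines a magic unitary and satisfies both intertwiner relations. Unlike the direction producing $\varphi$, where \Cref{lem:bichon-E} essentially hands us the commutations, here every relation must be recovered inside $C(\AutBichon(\Gamma))$ by repeated cross-term cancellations furnished by \Cref{prop:bic-intertwiner-equiv} and the row-sum identity of \Cref{prop:bic-sum-one}.
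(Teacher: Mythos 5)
Your proposal is correct and follows essentially the same route as the paper: two mutually inverse $*$-homomorphisms built from the universal properties, with \Cref{lem:bichon-E} supplying the commutation relation and the identification of the edge generators, \Cref{prop:bic-sum-one} giving the row and column sums of the edge magic unitary, the factorization $\Adj = A_s A_r^*$ combined with \Cref{prop:star-intertwiner} giving $\Adj u_V = u_V \Adj$, and \Cref{lem:directed-graph-rel} reducing the incidence intertwiner relations to the sums you describe. Your verification of those sums via \Cref{prop:bic-intertwiner-equiv} (extending to the full sum and killing the added terms) is an equivalent, marginally more direct phrasing of the paper's computation with $\Adj \widehat{u}_V = \widehat{u}_V \Adj$; there is no substantive difference.
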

\begin{proof}
Let $\Gamma := (V, E)$. Denote with $u$ denotes fundamental representation of $\Aut^+(\Gamma)$ and 
$\widehat{u}$ denotes the fundamental representation of $\AutBichon(\Gamma)$.
Further, define the elements
\[
  \widehat{u}_{(v_1, w_1)(v_2, w_2)} := \widehat{u}_{v_1 v_2} \widehat{u}_{w_1 w_2}
  \quad 
  \forall (v_1, w_1), (v_2, w_2) \in E
\]
and the matrices $\widehat{u}_V := {(\widehat{u}_{vw})}_{v,w \in V}$ and 
$\widehat{u}_V := {(\widehat{u}_{ef})}_{e,f \in V}$. 
To prove the statement,
we begin by constructing a unital $*$-homomorphism 
\begin{alignat*}{3}
  \varphi \colon C(\Aut^+(\Gamma)) &\to C(\AutBichon(\Gamma)), &&\\
  u_{vw} &\mapsto \widehat{u}_{vw} &\quad& \forall v,w\in V, \\
  u_{(v_1,w_1)(v_2,w_2)} &\mapsto \widehat{u}_{v_1 v_2}\widehat{u}_{w_1 w_2}
  &\quad& \forall (v_1,w_1),(v_2,w_2)\in E
\end{alignat*}
using the universal property of $C(\Aut^+(\Gamma))$. 
Thus, we have to show that the matrices $\widehat{u}_V$ 
and $\widehat{u}_E$ satisfy the relations from \Cref{def:hypergraph-qaut}.
By \Cref{def:bichon-qaut}, $\widehat{u}_V$ is a magic unitary and we have
\[
  \widehat{u}_{v_1 v_2} \widehat{u}_{w_1 w_2}
  = 
  \widehat{u}_{w_1 w_2} \widehat{u}_{v_1 v_2}
  \quad 
  \forall (v_1, w_1),(v_2, w_2) \in E,
\]
which implies
\begin{gather*}
  {(\widehat{u}_{v_1 v_2} \widehat{u}_{w_1 w_2})}^* 
  = {(\widehat{u}_{w_1 w_2})}^* {(\widehat{u}_{v_1 v_2})}^* 
  = \widehat{u}_{w_1 w_2} \widehat{u}_{v_1 v_2}
  = \widehat{u}_{v_1 v_2} \widehat{u}_{w_1 w_1},
  \\
  {(\widehat{u}_{v_1 v_2} \widehat{u}_{w_1 w_2})}^2 
  = \widehat{u}_{v_1 v_2} \widehat{u}_{w_1 w_2} \widehat{u}_{v_1 v_2} \widehat{u}_{w_1 w_2} 
  = {(\widehat{u}_{v_1 v_2})}^2 {(\widehat{u}_{w_1 w_2})}^2 
  = \widehat{u}_{v_1 v_2} \widehat{u}_{w_1 w_2}.
\end{gather*}
Further, the additional relations from \Cref{prop:bic-sum-one} yield
\begin{align*}
  & \sum_{(v_2, w_2) \in E} u_{v_1 v_2} u_{w_1 w_2} 
  = \sum_{\substack{v_2, w_2 \in V \\ (v_2, w_2) \in E}} u_{v_1 v_2} u_{w_1 w_2} 
  = 1, \\  
  & \sum_{(v_2, w_2) \in E} u_{v_2 v_1} u_{w_2 w_1} 
  = \sum_{\substack{v_2, w_2 \in V \\ (v_2, w_2) \in E}} u_{v_2 v_1} u_{w_2 w_1} 
  = 1
\end{align*}
for all $(v_1, w_1) \in E$. Hence, $\widehat{u}_E$ is a magic unitary.
Next, we check the intertwiner relation $A_s \widehat{u}_E = \widehat{u}_V A_s$.
Let $v_0 \in V$ and $(v_1, w_1) \in E$. Then
\[
  \sum_{\substack{(v_2, w_2) \in E \\ v_0 = v_2}}
  \widehat{u}_{v_2 v_1} \widehat{u}_{w_2 w_1} \\
  =
  \widehat{u}_{v_0 v_1} 
  \sum_{\substack{(v_2, w_2) \in E \\ v_0 = v_2}}
  \widehat{u}_{w_2 w_1} \\
  =
  \widehat{u}_{v_0 v_1} 
  \sum_{w_2 \in V}
  {(\Adj)}_{v_0 w_2 }\widehat{u}_{w_2 w_1}.
\]
By \Cref{def:bichon-qaut}, we have $\Adj \widehat{u}_V = \widehat{u}_V \Adj$, such that
\[
  \widehat{u}_{v_0 v_1} 
  \sum_{w_2 \in V}
  {(\Adj)}_{v_0 w_2 }\widehat{u}_{w_2 w_1}
  =
  \widehat{u}_{v_0 v_1} 
  \sum_{w_2 \in V}
  \widehat{u}_{v_0 w_2 } {(\Adj)}_{w_2 w_1} 
\]
Since $\widehat{u}_{v_0 v_1} \widehat{u}_{v_0 w_2} = \delta_{v_1 w_2} \widehat{u}_{v_0 v_1}$, it follows that 
\[
  \widehat{u}_{v_0 v_1} 
  \sum_{w_2 \in V}
  \widehat{u}_{v_0 w_2 } {(\Adj)}_{w_2 w_1} 
  =
  \sum_{w_2 \in V}
  \delta_{v_1 w_2} \widehat{u}_{v_0 v_1} {(\Adj)}_{w_2 w_1} 
  = \widehat{u}_{v_0 v_1} {(\Adj)}_{v_1 w_1} 
  = \widehat{u}_{v_0 v_1}.
\]
Thus, we have in total
\[
  \sum_{\substack{(v_2, w_2) \in E \\ v_0 = v_2}}
  \widehat{u}_{(v_2, w_2)(v_1, w_1)}
  =
  \sum_{\substack{(v_2, w_2) \in E \\ v_0 = v_2}}
  \widehat{u}_{v_2 v_1} \widehat{u}_{w_2 w_1}
  =
  \widehat{u}_{v_0 v_1}
  ,
\]
which is equivalent to $A_s \widehat{u}_E = \widehat{u}_V A_s$ by \Cref{lem:directed-graph-rel}.
Similarly, one shows $A_r \widehat{u}_E = \widehat{u}_V A_r$, such that the map $\varphi$ exists.

Next, we construct the inverse map 
\[
  \psi \colon C(\AutBichon(\Gamma)) \to C(\Aut^+(\Gamma)), \quad \widehat{u}_{vw} \mapsto u_{vw}
  \quad 
  \forall v, w \in V
\]
using the universal property of $C(\AutBichon(\Gamma))$. Thus, we have to show that the matrix $u_V$ 
satisfies the relations from \Cref{def:bichon-qaut}.
First, note that $u_V$ is a magic unitary by \Cref{def:hypergraph-qaut}.
Secondly, we have to show $\Adj u_V = u_V \Adj$. Observe that $\Adj = A_s A_r^*$, since 
\[
  {(A_s A_r^*)}_{v w} 
  = \sum_{e \in E} {(A_s)}_{v e} {(A_r)}_{w e}
  = \sum_{e \in E} \delta_{(v, w)e}
  = \begin{cases}
    1 & \text{if $(v, w) \in E$}, \\
    0 & \text{otherwise},
  \end{cases}
\]
for all $v, w \in V$. Hence,
\[
  \Adj u_V = A_s A_r^* u_V = A_s u_E A_r^* = u_V A_s A_r^* = u_V \Adj,
\]
because $A_r^*$ also intertwines $u_V$ and $u_E$ by \Cref{rem:hypergraph-qaut-rels}.
Finally, we have to check that
\[
  u_{v_1 v_2} u_{w_1 w_2} = u_{w_1 w_2} u_{v_1 v_2} \quad \forall (v_1, w_1), (v_2, w_2) \in E.
\]
But this follows directly from \Cref{lem:bichon-E}, since 
\[
  u_{v_1 v_2} u_{w_1 w_2} = u_{(v_1, w_1)(v_2, w_2)} = u_{w_1 w_2} u_{v_1 v_2} .
\]
Thus, the $*$-homomorphism $\psi$ exists.

Note that the maps $\varphi$ and $\psi$ are indeed inverse, because
\begin{alignat*}{3}
  \widehat{u}_{v w}
  & \ \longleftrightarrow \
  u_{v w} \\
  \widehat{u}_{v_1 v_2}\widehat{u}_{w_1 w_2} 
  & \ \longleftrightarrow \ 
  u_{v_1 v_2} u_{w_1 w_2} = u_{(v_1, w_1)(v_2, w_2)}
\end{alignat*}
by \Cref{lem:bichon-E}. 
Thus, remains to show that $\psi$ respects the comultiplication and defines an isomorphism of 
compact quantum groups. But this follows directly, since 
\[
  \Delta(\psi(\widehat{u}_{v w}))
  = \sum_{x \in V} u_{v x} \otimes u_{x w}
  = \sum_{x \in V} \psi(\widehat{u}_{v x}) \otimes \psi(\widehat{u}_{x w})
  = (\psi \otimes \psi)(\Delta( \widehat{u}_{v w}))
\]
for all $v, w \in V$.
\end{proof}

\subsection{Simple graphs}
Next, we come to simple graphs as in \Cref{def:simple-graph}.
Recall from \Cref{def:simple-graph-to-hypergraph} that we 
can regard simple graphs as $2$-uniform undirected hypergraphs without multi-edges by 
defining the source and range maps
\[
  s(\{v, w\}) := \{ v, w \},
  \qquad 
  r(\{v, w\}) := \{ v, w \}
  \qquad 
  \forall \{v, w\} \in E.
\]
In the following, we show that for a simple graph $\Gamma$ our 
quantum automorphism group $\Aut^+(\Gamma)$ agrees again
with the quantum automorphism group $\AutBichon(\Gamma)$ by Bichon.

We begin with a reformulation of the intertwiner relations
$A_s u_E = u_V A_s$ and $A_r u_E = u_V A_r$ in the setting of simple graphs.

\begin{lemma}\label{lem:simple-graph-inter}
Let $\Gamma := (V, E)$ be a simple graph, 
$\A$ a unital $C^*$-algebra and
\[
  u_V \in \A \otimes \C^{V \times V},
  \quad 
  u_E \in \A \otimes \C^{E \times E}
\]
two unitaries. Then the following
are equivalent:
\begin{enumerate}
\item $A_s u_E = u_V A_s$ and $A_r u_E = u_V A_r$,
\item $\displaystyle
  \sum_{\substack{v_2 \in V \\ \{v_0, v_2\} \in E}}
  u_{\{v_0, v_2\}\{v_1, w_1\}}
  = u_{v_0 v_1} + u_{v_0 w_1}
  \quad 
  \forall v_0 \in V, \, \{v_1, w_1\} \in E,
$
\item $\displaystyle
  \sum_{\substack{v_2 \in V \\ \{ v_0, v_2 \} \in E }} u_{\{ v_1, w_1 \} \{v_0, v_2\}} 
  = u_{v_1 v_0} + u_{w_1 v_0}
  \quad 
  \forall v_0 \in V, \, \{v_1, w_1\} \in E.
$
\end{enumerate}
\end{lemma}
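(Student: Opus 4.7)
The plan is to observe that for a simple graph the source and range maps coincide, so $A_s = A_r$, and condition~(1) reduces to the single relation $A_s u_E = u_V A_s$. Then conditions~(2) and~(3) should turn out to be precisely the entrywise forms of $A_s u_E = u_V A_s$ and $A_s^* u_V = u_E A_s^*$ respectively. Since $u_V$ and $u_E$ are unitaries, these two matrix relations are equivalent by \Cref{prop:star-intertwiner}, which will close the circle of equivalences.

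Concretely, I would fix $v_0 \in V$ and $\{v_1, w_1\} \in E$ and compute both sides of $A_s u_E = u_V A_s$ at the $(v_0, \{v_1, w_1\})$ entry. On the left side, $(A_s u_E)_{v_0, \{v_1,w_1\}} = \sum_{f \in E,\, v_0 \in s(f)} u_{f, \{v_1, w_1\}}$, and because $\Gamma$ is simple we have $v_0 \in s(f)$ iff $f = \{v_0, v_2\}$ for some $v_2 \in V$ with $\{v_0, v_2\} \in E$. On the right side, $(u_V A_s)_{v_0, \{v_1, w_1\}} = \sum_{w \in s(\{v_1, w_1\})} u_{v_0 w} = u_{v_0 v_1} + u_{v_0 w_1}$. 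Matching these gives exactly condition~(2).

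Next, using \Cref{prop:star-intertwiner}, the relation $A_s u_E = u_V A_s$ is equivalent to $A_s^* u_V = u_E A_s^*$. Again fixing $v_0$ and $\{v_1, w_1\}$, I compute $(u_E A_s^*)_{\{v_1, w_1\}, v_0} = \sum_{f \in E,\, v_0 \in s(f)} u_{\{v_1, w_1\}, f}$, which by the same reasoning sums over $f = \{v_0, v_2\} \in E$, while $(A_s^* u_V)_{\{v_1, w_1\}, v_0} = \sum_{v \in s(\{v_1, w_1\})} u_{v, v_0} = u_{v_1 v_0} + u_{w_1 v_0}$. Equating these yields condition~(3).

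The calculations here are entirely routine; there is no real obstacle beyond careful bookkeeping of which index runs over vertices and which over edges, and making sure that the simple-graph identities $s(\{v_1, w_1\}) = \{v_1, w_1\}$ and $v_0 \in s(f) \Leftrightarrow f = \{v_0, v_2\}$ are applied in the right places. The only conceptual input is \Cref{prop:star-intertwiner}, and the equivalence of the two intertwiner relations in~(1) is automatic from $A_s = A_r$.
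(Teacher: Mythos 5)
Your proposal is correct and follows essentially the same route as the paper: reduce condition (1) to the single relation $A_s u_E = u_V A_s$ via $A_s = A_r$, read off condition (2) from the $(v_0, \{v_1,w_1\})$ entries, and obtain condition (3) as the entrywise form of the adjoint relation $A_s^* u_V = u_E A_s^*$, which is equivalent by \Cref{prop:star-intertwiner}. The bookkeeping you describe (reindexing the sum over edges $f \ni v_0$ as a sum over $v_2$ with $\{v_0,v_2\}\in E$) is exactly what the paper does.
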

\begin{proof}
Since $A_s = A_r$, we only have to consider $A_s u_E = u_V A_s$.
Let $v_0 \in V$ and $e := \{ v_1, w_1 \} \in E$. Then 
\begin{align*}
  {(A_s u_E)}_{v_0 e}
  &= 
  \sum_{f \in E} {(A_s)}_{v_0 f} u_{f e}
  =
  \sum_{\substack{f \in E \\ v_0 \in f}} u_{f e} 
  =
  \sum_{\substack{v_2 \in V \\ \{ v_0, v_2 \} \in E }} u_{\{v_0, v_2\} \{ v_1, w_1 \}}, \\
  {(u_V A_s)}_{v_0 e}
  &=
  \sum_{w \in V} u_{v_0 w} {(A_s)}_{w e}
  =
  \sum_{\substack{w \in V \\ w \in e}} u_{v_0 w}
  = u_{v_0 v_1} + u_{v_0 w_1}.
\end{align*}
Hence, $A_s u_E = u_V A_s$ is equivalent to 
\[
  \sum_{\substack{v_2 \in V \\ \{ v_0, v_2 \} \in E }} u_{\{v_0, v_2\} \{ v_1, w_1 \}} = u_{v_0 v_1} + u_{v_0 w_1}
  \quad \forall v_0 \in V, \, \{v_1, w_1\} \in E.
\]
Similarly, we compute 
\begin{align*}
  {(u_E A_s^*)}_{e v_0}
  &= 
  \sum_{f \in E} u_{e f} {(A_s^*)}_{f v_0} 
  =
  \sum_{\substack{f \in E \\ v_0 \in f}} u_{e f} 
  =
  \sum_{\substack{v_2 \in V \\ \{ v_0, v_2 \} \in E }} u_{\{ v_1, w_1 \} \{v_0, v_2\}}, \\
  {(A_s^* u_V )}_{e v_0}
  &=
  \sum_{w \in V} {(A_s)}_{e w} u_{w v_0} 
  =
  \sum_{\substack{w \in V \\ w \in e}} u_{w v_0}
  = u_{v_1 v_0} + u_{w_1 v_0}.
\end{align*}
Thus, $u_E A_s^* = A_s^* u_V$ is equivalent to 
\[
  \sum_{\substack{v_2 \in V \\ \{ v_0, v_2 \} \in E }} u_{\{ v_1, w_1 \} \{v_0, v_2\}} = u_{w_1 v_1} + u_{w_2 v_1}
  \quad 
  \forall v_0 \in V, \, \{v_1, w_1\} \in E,
\]
which is again equivalent to $A_s u_E = u_V A_s$ by \Cref{prop:star-intertwiner}.
\end{proof}

As in the case of directed graphs, we can now use the previous lemma to express the 
entries of the magic unitary $u_E$ in terms of the entries of $u_V$.

\begin{lemma}\label{lem:simple-graph-u-E}
Let $\Gamma := (V, E)$ be a simple graph and 
denote with $u$ the fundamental representation
of $\Aut^+(\Gamma)$. Then
\begin{align*}
  u_{\{v_1, w_1 \}\{v_2,w_2\}}
  &= u_{v_1 v_2} u_{w_1 w_2} + u_{v_1 w_2} u_{w_1 v_2 } \\
  &= u_{v_1 v_2} u_{w_1 w_2} + u_{w_1 v_2} u_{v_1 w_2}
\end{align*}
for all $\{v_1, w_1\}, \{v_2, w_2\} \in E$.
\end{lemma}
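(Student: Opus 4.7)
The plan is to imitate the strategy of Lemma \ref{lem:bichon-E} from the directed case, adapted to the $2$-uniform undirected setting. Both equalities are obtained by computing $u_{\{v_1,w_1\}\{v_2,w_2\}}$ as the product of two sums in two different ways, and using orthogonality of the magic unitaries to cancel the unwanted cross terms. The key observation that makes the collapse work is that a simple graph, viewed as a hypergraph, is $2$-uniform without multi-edges, so the only $f \in E$ satisfying $\{v_1,w_1\} \subseteq s(f)$ is $f = \{v_1,w_1\}$, and similarly on the edge side for $\{v_2,w_2\}$.

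For the first equality, I would apply Lemma \ref{lem:rep-edges-1} with $X = \{v_1, w_1\}$ and $e = \{v_2, w_2\}$. The left-hand side collapses to $u_{\{v_1,w_1\}\{v_2,w_2\}}$ by the uniformity/no-multi-edges observation, while the right-hand side is $(u_{v_1 v_2} + u_{v_1 w_2})(u_{w_1 v_2} + u_{w_1 w_2})$. Expanding this product and noting that $u_{v_1 v_2}u_{w_1 v_2} = 0$ and $u_{v_1 w_2}u_{w_1 w_2} = 0$ by column orthogonality of $u_V$ (using $v_1 \neq w_1$), the two surviving terms give $u_{v_1 v_2}u_{w_1 w_2} + u_{v_1 w_2}u_{w_1 v_2}$.

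For the second equality, the same template must be run on the ``dual'' side, using the intertwiner $A_s^* u_V = u_E A_s^*$ from Remark \ref{rem:hypergraph-qaut-rels}. This relation yields, for $e := \{v_1, w_1\}$,
\[
  u_{v_1 v_2} + u_{w_1 v_2} = \sum_{f \ni v_2} u_{e f},
  \qquad
  u_{v_1 w_2} + u_{w_1 w_2} = \sum_{f \ni w_2} u_{e f}.
\]
Multiplying these two expressions and using the row orthogonality of $u_E$ (so $u_{e f} u_{e g} = \delta_{fg} u_{ef}$) collapses the right-hand side to $\sum_{f \supseteq \{v_2, w_2\}} u_{ef}$, which again equals $u_{\{v_1,w_1\}\{v_2,w_2\}}$ by $2$-uniformity and the absence of multi-edges. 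Expanding the left-hand side and killing $u_{v_1 v_2} u_{v_1 w_2}$ and $u_{w_1 v_2} u_{w_1 w_2}$ by row orthogonality of $u_V$ (using $v_2 \neq w_2$) leaves $u_{v_1 v_2} u_{w_1 w_2} + u_{w_1 v_2} u_{v_1 w_2}$, as required.

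There is no real obstacle here; every step is a direct application of magic-unitary orthogonality combined with one of the two intertwiner relations. The only mildly non-obvious point is recognizing that to obtain the two different orderings in the two displayed identities one must use the two dual intertwiners $A_s u_E = u_V A_s$ and $A_s^* u_V = u_E A_s^*$, respectively. As a pleasant byproduct, equating the two resulting formulas yields $u_{v_1 w_2} u_{w_1 v_2} = u_{w_1 v_2} u_{v_1 w_2}$, which is exactly the commutation relation needed when comparing $\Aut^+(\Gamma)$ with $\AutBichon(\Gamma)$ in the theorem that follows.
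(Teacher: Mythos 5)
Your proof is correct and follows essentially the same route as the paper: both identities come from multiplying the two relevant intertwiner sums and cancelling cross terms via magic-unitary orthogonality, with $2$-uniformity and the absence of multi-edges collapsing the edge sum to the single term $u_{\{v_1,w_1\}\{v_2,w_2\}}$. The only cosmetic difference is that for the first identity you invoke the general \Cref{lem:rep-edges-1} where the paper uses the graph-specific \Cref{lem:simple-graph-inter}; the underlying computation is identical.
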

\begin{proof}
Let $\{v_1, w_1\}, \{v_2, w_2\} \in E$. Since $v_1 \neq w_1$, we have
\begin{align*}
(u_{v_1 v_2} + u_{v_1 w_2})(u_{w_1 v_2} + u_{w_1 w_2})
&= 
\underbrace{u_{v_1 v_2} u_{w_1 v_2}}_{0} + u_{v_1 v_2} u_{w_1 w_2}
+ u_{v_1 w_2} u_{w_1 v_2} + \underbrace{u_{v_1 w_2} u_{w_1 w_2}}_{0} \\
&=
u_{v_1 v_2} u_{w_1 w_2} + u_{v_1 w_2} u_{w_1 v_2}.
\end{align*}
On the other hand, \Cref{lem:simple-graph-inter} yields
\begin{align*}
(u_{v_1 v_2} + u_{v_1 w_2})(u_{w_1 v_2} + u_{w_1 w_2})
&=
\sum_{\substack{v_3 \in V \\ \{v_1, v_3\} \in E}}
  u_{\{v_1, v_3\}\{v_2, w_2\}}
\sum_{\substack{v_4 \in V \\ \{w_1, v_4\} \in E}}
  u_{\{w_1, v_4\}\{v_2, w_2\}} \\
&= 
\sum_{\substack{v_3, v_4 \in V \\ \{v_1, v_3\}, \{w_1, v_4\} \in E}}
\underbrace{u_{\{v_1, v_3\}\{v_2, w_2\}} u_{\{w_1, v_4\}\{v_2, w_2\}}}_{\delta_{\{v_1, v_3\}\{w_1, v_4\}} u_{\{v_1, v_3\}\{v_2, w_2\}} } \\
&= 
u_{\{v_1, w_1\}\{v_2, w_2\}},
\end{align*}
where $\{v_1, v_3\} = \{w_1, v_4\}$ and $v_1 \neq w_1$ implies 
$v_3 = w_1$ in the last step.
Thus,
\[
  u_{\{v_1, w_1\}\{v_2, w_2\}} 
  = (u_{v_1 v_2} + u_{v_1 w_2})(u_{w_1 v_2} + u_{w_1 w_2}) 
  = u_{v_1 v_2} u_{w_1 w_2} + u_{v_1 w_2} u_{w_1 v_2}. 
\]
Similarly, we compute
\[
  (u_{v_1 v_2} + u_{w_1 v_2})
  (u_{v_1 w_2} + u_{w_1 w_2})
  =
  u_{v_1 v_2} u_{w_1 w_2}
  + 
  u_{w_1 v_2} u_{v_1 w_2}
\]
and 
\begin{align*}
  (u_{v_1 v_2} + u_{w_1 v_2})
  (u_{v_1 w_2} + u_{w_1 w_2})
  &=
  \sum_{\substack{v_3, v_4 \in V \\ \{v_1, v_3\}, \{v_2, v_4\} \in E}}
  \underbrace{u_{\{v_1, w_1\} \{v_2, v_3\} } u_{\{v_1, w_1\} \{w_2, v_4\}}}_{\delta_{\{v_2, v_3\} \{w_2, v_4\}} u_{\{v_1, w_1\} \{v_2, v_3\} }} \\
  &=
  u_{\{v_1, w_1\} \{v_2, w_2 \}}.
\end{align*}
using $v_2 \neq w_2$ and the other part of \Cref{lem:simple-graph-inter}. Hence,
\[
  u_{\{v_1, w_1\} \{v_2, w_2 \}}
  =
  u_{v_1 v_2} u_{w_1 w_2}
  + 
  u_{w_1 v_2} u_{v_1 w_2}
  \quad
  \forall \{v_1, w_1\}, \{v_2, w_2\} \in E.
\]
\end{proof}

In addition to the previous lemmas, we need the following proposition for general hypergraphs, which states that 
$u_{vw} = 0$ if the vertices $v$ and $w$ are contained in a different number of edges. This proposition
will also be used in \Cref{sec:qaut-multigraphs}
when computing the quantum automorphism groups of multigraphs.

Note that this type of relation seems to be useful when computing quantum 
symmetries of concrete hypergraphs. For example, the similar relation $u_{ij} u_{kl} = 0$ for $d(i, k) \neq d(j, l)$ 
was used in~\cite{schmidt18-2, schmidt20-2} to compute quantum symmetries of classical graphs, where $d$ denotes the distance between two vertices.

\begin{proposition}\label{prop:N-intertwiner}
Let $\Gamma := (V, E)$ be a hypergraph and 
denote with $u$ the fundamental representation
of $\Aut^+(\Gamma)$. Define
\[
  N_s(v) := \abs{\{ e \in E ~|~ v \in s(e) \}}
  \quad \forall v \in V.
\]
Then 
\[
  N_s(v) \cdot u_{vw} = u_{vw} \cdot N_s(w)
  \quad \forall v, w \in V.
\]
In particular, $N_s(v) \neq N_s(w)$ implies $u_{vw} = 0$. The same statement
also holds for the range map $r$.
\end{proposition}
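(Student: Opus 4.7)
The plan is to extract the scalar identity $N_s(v) = \sum_{w' \in V} u_{vw'}\,N_s(w')$ for every $v \in V$ from the intertwiner relation $A_s u_E = u_V A_s$, and then use the orthogonality relations in the magic unitary $u_V$ to isolate the individual terms $u_{vw}$.

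More concretely, I would first observe that since $u_E$ is a magic unitary, its row sums equal $1$, so $u_E \mathbf{1}_E = \mathbf{1}_E$ where $\mathbf{1}_E$ denotes the all-ones column vector indexed by $E$. Applying both sides of $A_s u_E = u_V A_s$ to $\mathbf{1}_E$ and reading off the $v$-th coordinate, the left-hand side gives $(A_s \mathbf{1}_E)_v = |\{e \in E : v \in s(e)\}| = N_s(v)$, while the right-hand side gives $\sum_{w' \in V} u_{vw'} N_s(w')$. Hence
\[
  N_s(v) \cdot 1 \;=\; \sum_{w' \in V} u_{vw'}\, N_s(w')
  \qquad \forall v \in V.
\]

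The second step is the main trick: multiply this identity on the right by $u_{vw}$. Since each $N_s(w')$ is a scalar, it commutes with everything, and the magic unitary relations for $u_V$ give $u_{vw'} u_{vw} = \delta_{w'w} u_{vw}$ because distinct entries in the same row are orthogonal projections. This collapses the sum to a single term:
\[
  N_s(v)\, u_{vw} \;=\; \sum_{w' \in V} u_{vw'} u_{vw}\, N_s(w') \;=\; u_{vw}\, N_s(w),
\]
which is the desired identity. The ``in particular'' follows immediately: if $N_s(v) \neq N_s(w)$, then $(N_s(v) - N_s(w))\, u_{vw} = 0$ with a nonzero scalar prefactor, forcing $u_{vw} = 0$. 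The statement for $r$ is obtained by running the same argument with $A_r u_E = u_V A_r$ in place of $A_s u_E = u_V A_s$.

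I do not foresee a genuine obstacle here; the proof is essentially a one-line consequence of the intertwiner applied to $\mathbf{1}_E$ combined with the row-orthogonality of $u_V$. The only subtle point to get right is the direction of the multiplication by $u_{vw}$ (right-multiplication so that the $w'$ on the left of $u_{vw}$ collapses to $w$), and the fact that one does not need the entries $u_{vw'}$ to commute with each other — only the scalar $N_s(w')$ needs to move past $u_{vw}$, which is automatic.
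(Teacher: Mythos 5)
Your proof is correct and follows essentially the same route as the paper: the paper obtains the identity $N_s(v) = \sum_{w} u_{vw} N_s(w)$ by summing the entrywise intertwiner relation over all $e \in E$ (which is exactly your application to the all-ones vector $\mathbf{1}_E$), and then collapses the sum by multiplying with $u_{vw}$ and using row-orthogonality of the magic unitary. The only cosmetic difference is that the paper multiplies by $u_{vw}$ on the left rather than the right, which is immaterial since the row entries are mutually orthogonal projections.
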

\begin{proof}
Let $v \in V$. By summing both sides of ${(A_s u_E)}_{ve} = {(u_V A_s)}_{ve}$ over all $e \in E$, we obtained
\begin{gather*}
  \sum_{e \in E} \sum_{f \in E} {(A_s)}_{vf} u_{fe}
  = 
  \sum_{f \in E} {(A_s)}_{vf} \underbrace{\bigg(\sum_{e \in E} u_{fe}\bigg)}_{1}
  =
  \sum_{f \in E} {(A_s)}_{vf}
  =
  N_s(v), \\
  \sum_{e \in E} \sum_{w \in V} u_{vw} {(A_s)}_{we}
  =
  \sum_{w \in V} u_{vw} \bigg( \sum_{e \in E} {(A_s)}_{we} \bigg)
  =
  \sum_{w \in V} u_{vw} \cdot N_s(w).
\end{gather*}
Thus,
\[
  N_s(v) = \sum_{w \in V} u_{vw} \cdot N_s(w),
\] 
which implies
\[
  N_s(v) \cdot u_{vw} = u_{vw} \cdot N_s(v) = \sum_{x \in V} \underbrace{u_{vw} u_{vx}}_{\delta_{wx} u_{vw}} \cdot N_s(x)
  = u_{vw} \cdot N_s(w) 
  \quad
  \forall v,w \in V.
\]
In particular, if $N_s(v) \neq N_s(w)$, then
\[
  N_s(v) \cdot u_{vw} = u_{vw} \cdot N_s(w)
  \ \Longleftrightarrow \
  (N_s(v) - N_s(w)) \cdot u_{vw} = 0
  \ \Longleftrightarrow \
  u_{vw} = 0.
\]
The corresponding statement and proof for the range map 
can be obtained by replacing $s$ with $r$.
\end{proof}

We can now show that for simple graphs our quantum automorphism group 
agrees with the quantum automorphism group by Bichon
when we identify the magic unitary $u_V$ with the fundamental representation
of $\AutBichon(\Gamma)$.

\begin{theorem}\label{thm:simple-graph-bichon}
Let $\Gamma$ be a simple graph. Then $\Aut^+(\Gamma) = \AutBichon(\Gamma)$.
\end{theorem}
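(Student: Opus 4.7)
The plan is to mirror the strategy from \Cref{thm:directed-graph-bichon}: construct a pair of mutually inverse unital $*$-homomorphisms $\varphi \colon C(\Aut^+(\Gamma)) \to C(\AutBichon(\Gamma))$ and $\psi \colon C(\AutBichon(\Gamma)) \to C(\Aut^+(\Gamma))$ using the universal properties of the two $C^*$-algebras, and then verify that they intertwine the comultiplications. Write $u$ for the fundamental representation of $\Aut^+(\Gamma)$ and $\widehat{u}$ for that of $\AutBichon(\Gamma)$.

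For $\varphi$, I would set $\varphi(u_{vw}) := \widehat{u}_{vw}$ and motivated by \Cref{lem:simple-graph-u-E} define
\[
\widehat{u}_{\{v_1, w_1\}\{v_2, w_2\}} := \widehat{u}_{v_1 v_2}\widehat{u}_{w_1 w_2} + \widehat{u}_{v_1 w_2}\widehat{u}_{w_1 v_2},
\]
then send $u_{\{v_1,w_1\}\{v_2,w_2\}}$ to this element. The well-definedness under the swaps $v_i \leftrightarrow w_i$ follows from Bichon's commutation relation. To see that $\widehat{u}_E := (\widehat{u}_{\{v_1,w_1\}\{v_2,w_2\}})$ is a magic unitary, self-adjointness and idempotence reduce to Bichon's commutation relation together with the orthogonality $\widehat{u}_{w_1 w_2}\widehat{u}_{v_1 w_2} = 0$ (same column, different rows when $v_1 \neq w_1$), and the row/column sums collapse via \Cref{prop:bic-sum-one} since ordering the pair $\{v_2, w_2\}$ turns the sum into $\sum_{(v_2,w_2) \text{ ordered}, \{v_2,w_2\}\in E} \widehat{u}_{v_1 v_2}\widehat{u}_{w_1 w_2} = 1$. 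For the intertwiner relation, I would use \Cref{lem:simple-graph-inter}: the required identity
\[
\sum_{\{v_0,v_2\}\in E}\left(\widehat{u}_{v_0 v_1}\widehat{u}_{v_2 w_1} + \widehat{u}_{v_0 w_1}\widehat{u}_{v_2 v_1}\right) = \widehat{u}_{v_0 v_1} + \widehat{u}_{v_0 w_1}
\]
follows by pulling $\widehat{u}_{v_0 v_1}$ and $\widehat{u}_{v_0 w_1}$ out of the sums, applying $\Adj \widehat{u}_V = \widehat{u}_V \Adj$ to rewrite each sum as a sum indexed by neighbors of $v_1$ (resp.\ $w_1$), and using $\widehat{u}_{v_0 v_1}\widehat{u}_{v_0 v_2} = \delta_{v_1 v_2}\widehat{u}_{v_0 v_1}$ together with $\{v_1,w_1\}\in E$.

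For $\psi$, I would send $\widehat{u}_{vw} \mapsto u_{vw}$ and check the three relations of \Cref{def:bichon-qaut}. The magic unitary relation holds by \Cref{def:hypergraph-qaut}. For $\Adj u_V = u_V \Adj$, observe that in the simple graph case $A_s = A_r$ and $A_s A_s^* = D + \Adj$, where $D$ is the diagonal degree matrix; then $A_s A_s^* u_V = A_s u_E A_s^* = u_V A_s A_s^*$ using both intertwiner relations from \Cref{rem:hypergraph-qaut-rels}, while $D u_V = u_V D$ follows from \Cref{prop:N-intertwiner} since $N_s(v) = \deg(v)$. Hence $\Adj u_V = u_V \Adj$. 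The commutation relation $u_{ik}u_{jl} = u_{jl}u_{ik}$ for $i \sim j$ and $k \sim l$ is the one place that really uses the two-fold description in \Cref{lem:simple-graph-u-E}: equating the two expressions for $u_{\{v_1,w_1\}\{v_2,w_2\}}$ at $\{v_1,w_1\} = \{i,j\}$ and $\{v_2,w_2\} = \{l,k\}$ gives $u_{il}u_{jk} = u_{jk}u_{il}$, which is exactly what is needed.

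Finally, $\varphi \circ \psi = \id$ on generators is immediate, and $\psi \circ \varphi = \id$ follows from \Cref{lem:simple-graph-u-E}. Respect of comultiplications reduces to a short computation on the vertex generators, as in the proof of \Cref{thm:directed-graph-bichon}. I expect the main obstacle to be the magic unitary check for $\widehat{u}_E$ under $\varphi$: the cross terms in the idempotency calculation need to vanish by the orthogonality relations of $\widehat{u}_V$, and the row-sum computation must be reorganized carefully as a sum over \emph{ordered} pairs in $E$ in order to apply \Cref{prop:bic-sum-one}.
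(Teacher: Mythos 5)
Your proposal is correct and follows essentially the same route as the paper's proof: the same candidate $\widehat{u}_{\{v_1,w_1\}\{v_2,w_2\}} = \widehat{u}_{v_1 v_2}\widehat{u}_{w_1 w_2} + \widehat{u}_{v_1 w_2}\widehat{u}_{w_1 v_2}$, the same magic-unitary and intertwiner checks via \Cref{prop:bic-sum-one} and \Cref{lem:simple-graph-inter} for $\varphi$, the same decomposition $A_sA_s^* = \Adj + D$ with \Cref{prop:N-intertwiner} for $\psi$, and the same use of the two expressions in \Cref{lem:simple-graph-u-E} to recover Bichon's commutation relation. The only cosmetic difference is that you evaluate the row sums over ordered pairs directly, where the paper sums over unordered pairs and obtains $\tfrac12+\tfrac12$.
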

\begin{proof}
Denote with $u$ the fundamental representation
of $\Aut^+(\Gamma)$ and with $\widehat{u}$ the fundamental
representation of $\AutBichon(\Gamma)$. Further, define 
the elements
\[
  \widehat{u}_{\{v_1, w_1\} \{v_2, w_2\}}
  := 
  \widehat{u}_{v_1 v_2} \widehat{u}_{w_1 w_2} + \widehat{u}_{v_1 w_2} \widehat{u}_{w_1 v_2}
  \quad 
  \forall \{v_1, w_1\}, \{v_2, w_2\} \in E
\]
and the matrices $\widehat{u}_V := {(\widehat{u}_{vw})}_{v,w\in V}$
and $\widehat{u}_E := {(\widehat{u}_{ef})}_{e,f\in E}$.
Note that $\widehat{u}_{\{v_1, w_1\} \{v_2, w_2\}}$ is 
well-defined, since 
\[
  \widehat{u}_{v_1 v_2} \widehat{u}_{w_1 w_2}
  =
  \widehat{u}_{w_1 w_2} \widehat{u}_{v_1 v_2}
  \quad 
  \forall
  \{v_1, w_1\}, \{v_2, w_2\} \in E
\]
by \Cref{def:bichon-qaut}.
First, we construct a unital $*$-homomorphism
\begin{alignat*}{3}
  \varphi \colon C(\Aut^+(\Gamma)) &\to C(\AutBichon(\Gamma)), \\
  u_{vw} & \mapsto \widehat{u}_{vw} & \quad & \forall v, w \in V, \\
  u_{ef} & \mapsto \widehat{u}_{ef} & \quad & \forall e, f \in E 
\end{alignat*}
using the universal property of $C(\Aut^+(\Gamma))$.
Thus, we have to show that $\widehat{u}_V$ and $\widehat{u}_E$
are magic unitaries which are intertwined by $A_s = A_r$.
By \Cref{def:bichon-qaut}, the matrix $\widehat{u}_V$ is a magic unitary
and we can compute
\begin{align*}
  \widehat{u}_{\{v_1, w_1\}, \{v_2, w_2\}}^*
  &= {(\widehat{u}_{v_1 v_2} \widehat{u}_{w_1 w_2})}^* + {(\widehat{u}_{v_1 w_2} \widehat{u}_{w_1 v_2})}^* \\
  &= \widehat{u}_{w_1 w_2} \widehat{u}_{v_1 v_2} + \widehat{u}_{w_1 v_2} \widehat{u}_{v_1 w_2} \\
  &= \widehat{u}_{v_1 v_2} \widehat{u}_{w_1 w_2} + \widehat{u}_{v_1 w_2} \widehat{u}_{w_1 v_2} \\
  &= \widehat{u}_{\{v_1, w_1\}, \{v_2, w_2\}}
\end{align*}
for all $\{v_1, w_1\}, \{v_2, w_2\} \in E$. Similarly, we compute
\begin{align*}
  \widehat{u}_{\{v_1, w_1\}\{v_2, w_2\}}^2
  &=
  (\widehat{u}_{v_1 v_2} \widehat{u}_{w_1 w_2} + \widehat{u}_{v_1 w_2} \widehat{u}_{w_1 v_2})
  (\widehat{u}_{v_1 v_2} \widehat{u}_{w_1 w_2} + \widehat{u}_{v_1 w_2} \widehat{u}_{w_1 v_2}) \\
  &=
  \widehat{u}_{v_1 v_2} \widehat{u}_{w_1 w_2} \widehat{u}_{v_1 v_2} \widehat{u}_{w_1 w_2}
  +
  \widehat{u}_{v_1 w_2} \widehat{u}_{w_1 v_2} \widehat{u}_{v_1 w_2} \widehat{u}_{w_1 v_2} \\
  &=
  \widehat{u}_{v_1 v_2}^2 \widehat{u}_{w_1 w_2}^2 + \widehat{u}_{v_1 w_2}^2 \widehat{u}_{w_1 v_2}^2 \\
  &=
  \widehat{u}_{v_1 v_2} \widehat{u}_{w_1 w_2} + \widehat{u}_{v_1 w_2} \widehat{u}_{w_1 v_2} \\
  &= 
  \widehat{u}_{\{v_1, w_1\}\{v_2, w_2\}}
\end{align*}
for all $\{v_1, w_1\}, \{v_2, w_2\} \in E$, 
where we additionally used the fact that 
\[
  \widehat{u}_{w_1 w_2} \widehat{u}_{v_1 w_2} = 0, \qquad \widehat{u}_{w_1 v_2} \widehat{u}_{v_1 v_2} = 0,
\]
since $v_1 \neq w_1$.
Next, we want to show that the rows and columns of $\widehat{u}_E$ sum to $1$. 
Observe, that 
\[
  \sum_{\{v_1, w_1\} \in E} \widehat{u}_{v_1 v_2} \widehat{u}_{w_1 w_2}
  = 
  \frac{1}{2}
  \sum_{\substack{v_1, w_1 \in V \\ \{v_1, w_1\} \in V}} \widehat{u}_{v_1 v_2} \widehat{u}_{w_1 w_2}
  =
  \frac{1}{2}
  \quad 
  \forall 
  \{v_2, w_2\} \in E
\]
by \Cref{prop:bic-sum-one}. Therefore,
\[
  \sum_{\{v_1, w_1\} \in E} \widehat{u}_{\{v_1, w_1\} \{v_2, w_2\}}
  = 
  \sum_{\{v_1, w_1\} \in E} \widehat{u}_{v_1 v_2} \widehat{u}_{w_1 w_2}
  +
  \sum_{\{v_1, w_1\} \in E} \widehat{u}_{v_1 w_2} \widehat{u}_{w_1 v_2}
  =
  \frac{1}{2} + \frac{1}{2}
  = 
  1
\]
for all $\{v_2, w_2\} \in E$. Similarly, we have 
\[
  \sum_{\{v_2, w_2\} \in E} \widehat{u}_{\{v_1, w_1\} \{v_2, w_2\}}
  = 
  \frac{1}{2} + \frac{1}{2}
  = 
  1
\]
for all $\{v_1, w_1\} \in E$.
Hence, $\widehat{u}_E$ is a magic unitary. It remains to show that 
$A_s \widehat{u}_E = \widehat{u}_V A_s$. But this follows from \Cref{lem:simple-graph-inter}
and $\Adj \widehat{u}_V = \widehat{u}_V \Adj$, 
because
\begin{align*}
  \sum_{\substack{v_2 \in V \\ \{v_0, v_2\} \in E}}
  \widehat{u}_{\{v_0, v_2\}\{v_1, w_1\}} 
  &= 
  \sum_{\substack{v_2 \in V \\ \{v_0, v_2\} \in E}}
  \left( \widehat{u}_{v_0 v_1} \widehat{u}_{v_2 w_1} + \widehat{u}_{v_0 w_1} \widehat{u}_{v_2 v_1} \right) \\
  &=
  \widehat{u}_{v_0 v_1} \sum_{\substack{v_2 \in V \\ \{v_0, v_2\} \in E}} \widehat{u}_{v_2 w_1} 
  + \widehat{u}_{v_0 w_1} \sum_{\substack{v_2 \in V \\ \{v_0, v_2\} \in E}} \widehat{u}_{v_2 v_1} \\
  &=
  \widehat{u}_{v_0 v_1} \sum_{v_2 \in V} {(\Adj)}_{v_0 v_2 } \widehat{u}_{v_2 w_1} 
  + \widehat{u}_{v_0 w_1} \sum_{v_2 \in V} {(\Adj)}_{v_0 v_2} \widehat{u}_{v_2 v_1} \\
  &=
  \widehat{u}_{v_0 v_1} \sum_{v_2 \in V} \widehat{u}_{v_0 v_2 } {(\Adj)}_{v_2 w_1} 
  + \widehat{u}_{v_0 w_1} \sum_{v_2 \in V} \widehat{u}_{v_0 v_2} {(\Adj)}_{v_2 v_1} \\
  &=
  \sum_{v_2 \in V} \underbrace{\widehat{u}_{v_0 v_1} \widehat{u}_{v_0 v_2 }}_{\delta_{v_1 v_2} \widehat{u}_{v_0 v_1}} {(\Adj)}_{v_2 w_1} 
  + \sum_{v_2 \in V} \underbrace{\widehat{u}_{v_0 w_1} \widehat{u}_{v_0 v_2}}_{\delta_{w_1 v_2} \widehat{u}_{v_0 w_1}} {(\Adj)}_{v_2 v_1} \\
  &= 
  \widehat{u}_{v_0 v_1} + \widehat{u}_{v_0 w_1}
\end{align*}
for all $v_0 \in V$ and $\{v_1, w_1\} \in E$.
Hence, the map $\varphi$ exists.

Next, we construct the inverse map
\begin{alignat*}{3}
  \psi \colon C(\AutBichon(\Gamma)) &\to C(\Aut^+(\Gamma)),  \\
  \widehat{u}_{vw} & \mapsto u_{vw} & \quad & \forall v, w \in V
\end{alignat*}
using the universal property of $C(\AutBichon(\Gamma))$. 
Hence, we have to show that $u_V$ satisfies the relations
from \Cref{def:bichon-qaut}. By \Cref{def:hypergraph-qaut},
$u_V$ is a magic unitary. To show that $\Adj u_V = u_V \Adj$, 
observe that 
\[
  {(A_s A_s^*)}_{v w}
  =
  \sum_{e \in E} {(A_s)}_{v e} {(A_s)}_{w e} 
  =
  \abs{\{ e \in E ~|~ v \in s(e) \ \land \ w \in s(e) \}}
  \quad 
  \forall v, w \in V.
\] 
Since $\Gamma$ is a simple graph, each $e \in E$ contains exactly two elements,
such that 
\[
  {(A_s A_s^*)}_{v w} = \begin{cases}
      0 & \text{if $v \neq w$, $\{v, w\} \notin E$}, \\
      1 & \text{if $v \neq w$, $\{v, w\} \in E$}, \\
      N_s(v) & \text{if $v = w$}. \\
    \end{cases}
\]
where 
\[
    N_s(v) := \abs{\{ e \in E ~|~ v \in s(e) \}}
\]
as in \Cref{prop:N-intertwiner}. 
Hence, we can write
\[
    A_s A_s^* = \Adj + T
\]
with $T \in \C^{V \times V}$ defined by 
\[
    T_{vw} := \begin{cases}
      0       & \text{if $v \neq w$}, \\
      N_s(v)  & \text{if $v = w$},
    \end{cases}
    \quad 
    \forall v, w \in V.
\]
\Cref{prop:N-intertwiner}
shows that $T u_V = u_V T$, which implies
\[
    \Adj u_V = 
    A_s A_s^* u_V - T u_V
    =
    A_s u_E A_s^* - u_V T
    =
    u_V A_s A_s^* - u_V T
    =
    u_V \Adj.
\]
Finally, we have to show that 
\[
  u_{v_1 v_2} u_{w_1 w_2} = u_{w_1 w_2} u_{v_1 v_2}
  \quad 
  \forall \{v_1, w_1\}, \{v_2, w_2\} \in E.
\]
But this follows directly from \Cref{lem:simple-graph-u-E}, since 
\begin{align*}
  u_{v_1 v_2} u_{w_1 w_2}  
  &= u_{\{v_1, w_1\}\{v_2,w_2\}} - u_{v_1 w_2} u_{w_1 v_2} \\
  &= u_{\{w_1, v_1\}\{w_2,v_2\}} - u_{v_1 w_2} u_{w_1 v_2} \\
  &= u_{w_1 w_2} u_{v_1 v_2} + u_{v_1 w_2} u_{w_1 v_2} - u_{v_1 w_2} u_{w_1 v_2} \\
  &= u_{w_1 w_2} u_{v_1 v_2}.
\end{align*}
Thus, the $*$-homomorphism $\varphi$ exists. 

By definition of $\varphi$ and $\psi$, we have
\begin{align*}
  u_{vw} \ & \longleftrightarrow \ \widehat{u}_{vw} \\
  u_{\{v_1, w_1\}\{v_2,w_2\}} = u_{v_1 v_2} u_{w_1 w_2} + u_{v_1 w_2} u_{w_1 v_2} \ & \longleftrightarrow \ \widehat{u}_{v_1 v_2} \widehat{u}_{w_1 w_2} + \widehat{u}_{v_1 w_2} \widehat{u}_{w_1 v_2}
\end{align*}
for all $v, w \in V$ and $\{v_1, w_1\}, \{v_2, w_2\} \in E$,
such that both maps are indeed inverse.
Further, $\psi$ is a morphism compact quantum group, since
\[
  \Delta(\psi(\widehat{u}_{v w}))
  = \sum_{x \in V} u_{v x} \otimes u_{x w}
  = \sum_{x \in V} \psi(\widehat{u}_{v x}) \otimes \psi(\widehat{u}_{x w})
  = (\psi \otimes \psi)(\Delta( \widehat{u}_{v w}))
\]
for all $v, w \in V$. Hence, $\psi$ is 
an isomorphism of compact quantum groups, which shows 
$\Aut^+(\Gamma) = \AutBichon(\Gamma)$.
\end{proof}

\subsection{Multigraphs}\label{sec:qaut-multigraphs}

Finally, we consider the case of multigraphs as in \Cref{def:multi-graph}.
Recall from \Cref{def:multigraph-to-hypergraph} that 
we can identify a multigraph $\Gamma := (V, E)$ with source map $s' \colon E \to V$ and range maps $r' \colon E \to V$ 
with a $1$-uniform hypergraphs by defining 
new source and range maps
$s \colon E \to \PP(V)$ and $r \colon E \to \PP(V)$
given by
\[
  s(e) := \{ s'(e) \}, \quad 
  r(e) := \{ r'(e) \} \quad 
  \forall e \in E.
\]

Throughout the rest of this section, we will identify the maps 
$s'$ and $r'$ with the maps $s$ and $r$. In particular, we will write 
$u_{s(e) v}$ instead of $u_{s'(e) v}$ for an edge $e \in E$ and a vertex $v \in V$.

The goal of this section is to show that our quantum automorphisms
group $\Aut^+(\Gamma)$ agrees with 
the quantum automorphism group $\AutGHBichon(\Gamma)$ by Goswami-Hossain 
for multigraphs without isolated vertices.
We begin by reformulating the intertwiner relations $A_s u_E = u_V A_s$ 
and $A_r u_E = u_V A_r$ in the setting of multigraphs.

\begin{lemma}\label{lem:multigraph-intertwiner-rel}
Let $\Gamma := (V, E)$ be a multigraph and denote 
with $u$ the fundamental representation of $\Aut^+(\Gamma)$. Then the relations 
$A_s u_E = u_V A_s$ and $A_r u_E = u_V A_r$ are equivalent to
\[
  \sum_{\substack{f \in E \\ s(f) = v}} u_{e f} = u_{s(e) v},
  \qquad
  \sum_{\substack{f \in E \\ r(f) = v}} u_{e f} = u_{r(e) v}
\]
for all $v \in V$ and $e \in E$.
\end{lemma}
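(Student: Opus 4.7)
The plan is to directly compute the matrix entries on both sides of the intertwiner equations, exploiting the fact that in the multigraph interpretation each $s(e) = \{s'(e)\}$ and $r(e) = \{r'(e)\}$ is a singleton. Because the statement of the lemma has the generator indexed as $u_{ef}$ (not $u_{fe}$) and $u_{s(e)v}$ (not $u_{v\,s(e)}$), I will pass through the equivalent ``adjoint'' intertwiner relations $A_s^* u_V = u_E A_s^*$ and $A_r^* u_V = u_E A_r^*$ given by \Cref{prop:star-intertwiner}.

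In detail, the first step is to apply \Cref{prop:star-intertwiner} to replace the hypotheses $A_s u_E = u_V A_s$ and $A_r u_E = u_V A_r$ by $A_s^* u_V = u_E A_s^*$ and $A_r^* u_V = u_E A_r^*$, which is legitimate since $u_V$ and $u_E$ are unitaries and $A_s, A_r$ are scalar matrices. The second step is a routine entrywise computation: for any $e \in E$ and $v \in V$, I would expand
\[
  (A_s^* u_V)_{ev} = \sum_{w \in V} (A_s)_{we}\, u_{wv} = \sum_{\substack{w \in V \\ w \in s(e)}} u_{wv} = u_{s(e)\,v},
\]
where the last equality uses that $s(e)$ is a single vertex in the multigraph case, and similarly
\[
  (u_E A_s^*)_{ev} = \sum_{f \in E} u_{ef}\,(A_s)_{vf} = \sum_{\substack{f \in E \\ v \in s(f)}} u_{ef} = \sum_{\substack{f \in E \\ s(f) = v}} u_{ef}.
\]
Equating these gives the first claimed identity, and the third step is to repeat the same calculation with $r$ in place of $s$ to obtain the range version. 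The converse direction is immediate by reading the same entrywise equalities backwards and invoking \Cref{prop:star-intertwiner} once more.

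There is no real obstacle here: the proof is essentially the same entrywise bookkeeping as in \Cref{lem:directed-graph-rel} for directed graphs, with the only subtlety being that multigraphs allow several edges $f$ with $s(f) = v$, which is why the left-hand side remains an honest sum rather than reducing to a single generator. The key simplification is the $1$-uniformity (singleton $s(e)$ and $r(e)$), which collapses the right-hand side to the single generator $u_{s(e)v}$ respectively $u_{r(e)v}$.
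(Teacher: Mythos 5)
Your proposal is correct and follows essentially the same route as the paper: both pass to the adjoint relations $A_s^* u_V = u_E A_s^*$ and $A_r^* u_V = u_E A_r^*$ via \Cref{prop:star-intertwiner} and then compare matrix entries, using that $s(e)$ and $r(e)$ are singletons to collapse the right-hand side to a single generator. Nothing is missing.
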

\begin{proof}
First, consider the equation $A_s u_E = u_V A_s$, which is
equivalent to $A_s^* u_V = u_E A_s^*$ by \Cref{prop:star-intertwiner}. A direct computation yields
\[
  {(u_E A^*_s)}_{ev}
  = \sum_{\substack{f \in E \\ s(f) = v}} u_{ef},
  \qquad
  {(A^*_s u_V)}_{ev}
  =
  \sum_{\substack{w \in V \\ s(e) = w}} u_{wv} = u_{s(e) v}
\]
for all $e \in E$ and $v \in V$, such that $A_s^* u_V = u_E A_s^*$ is equivalent to 
\[
  \sum_{\substack{f \in E \\ s(f) = v}} u_{ef} = u_{s(e) v}.
\]
The statement for the range map $r$ can be obtained by replacing $s$ with $r$
in the previous computation.
\end{proof}

To show that our quantum automorphism group agrees with $\AutGHBichon(\Gamma)$, we will identify 
the magic unitary $u_E$ with the fundamental representation of $\AutGHBichon(\Gamma)$.
However, we need further elements in $C(\AutGHBichon(\Gamma))$ which correspond
to the magic unitary $u_V$.

\begin{definition}\label{def:GH-uV}
Let $\Gamma:= (V, E)$ be a multigraph without isolated vertices
and denote with $u$ the fundamental representation of $\AutGHBichon(\Gamma)$.
Define the elements
\[
  u_{vw} := \begin{cases}
    \displaystyle \sum_{\substack{f \in E \\ s(f) = w}} u_{e f} & \text{if there exists $e \in E$ with $s(e) = v$}, \\
    \displaystyle \sum_{\substack{f \in E \\ r(f) = w}} u_{e f} & \text{if there exists $e \in E$ with $r(e) = v$},
  \end{cases}
\]
for all $v, w \in V$.
\end{definition}

Next, we show that these elements $u_{vw}$ are
well-defined and form a quantum permutation on the vertices.

\begin{lemma}\label{lem:GH-uV-magic}
Let $\Gamma:= (V, E)$ be a multigraph without isolated vertices.
Then the elements $u_{vw}$ in \Cref{def:GH-uV} are well-defined. Further, the 
matrix $u_V := {(u_{vw})}_{v,w \in V}$ is a magic unitary.
\end{lemma}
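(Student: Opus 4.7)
The plan is to verify the two claims of the lemma in turn, treating the column sum identity as the main difficulty. For brevity, let $V_1, V_2, V_3 \subseteq V$ denote the pure sources, pure sinks, and ``neither source nor sink'' vertices, so that $V = V_1 \sqcup V_2 \sqcup V_3$ by the no-isolated-vertex hypothesis, and write $V_s = V_1 \cup V_3$ and $V_r = V_2 \cup V_3$ for the non-sinks and non-sources.

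For well-definedness of $u_{vw}$, only $v \in V_3$ needs attention, as otherwise only one of the two cases of \Cref{def:GH-uV} applies. Fix $e_1, e_2 \in E$ with $s(e_1) = v = r(e_2)$. For $w \in V_3$, Relation~\ref{def:qaut-GH-4} of \Cref{def:qaut-GH} directly gives the equality. For $w \in V_1$, the $r$-defined sum is empty while the $s$-defined sum vanishes termwise by Relation~\ref{def:qaut-GH-3}; the case $w \in V_2$ is symmetric. The projection and self-adjointness properties of each $u_{vw}$ are then immediate: $u_{vw}$ is a sum over a subset of columns of a single row of the magic unitary $u_E$, hence a sum of pairwise orthogonal projections and itself a projection. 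The row sum $\sum_{w \in V} u_{vw} = 1$ follows by interchanging summations, since $s$ (respectively $r$) is a function and its fibers partition $E$, so the double sum collapses to $\sum_{f \in E} u_{ef} = 1$.

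The column sum $\sum_{v \in V} u_{vw} = 1$ requires more work. The plan is first to combine Relation~\ref{def:qaut-GH-3} with its ``dual'' form (namely $u_{ef} = 0$ whenever $s(e) \in V_1$ and $s(f) \in V_3$, and likewise for $r$), obtained by applying the antipode $S(u_{ef}) = u_{fe}$ of the compact quantum group $\AutGHBichon(\Gamma)$: this forces $u_V$ to be block diagonal with respect to $V = V_1 \sqcup V_2 \sqcup V_3$. Second, summing $\sum_{f : s(f)=w} u_{ef}$ over all $e \in E$ (grouped by $s(e)$) and invoking Relation~\ref{def:qaut-GH-2} gives the weighted identity
\[
  \sum_{v \in V_s} N_s(v)\, u_{vw} \;=\; N_s(w), \qquad N_s(v) := |\{e \in E : s(e) = v\}|.
\]
Applying the antipode to Relation~\ref{def:qaut-GH-2} yields the dual identity $\sum_{e : s(e)=v} u_{e f_1} = \sum_{e : s(e)=v} u_{e f_2}$ for $s(f_1) = s(f_2)$, so that both $N_s(v)\, u_{vw}$ and $N_s(w)\, \widetilde{u}_{vw}$ (with $\widetilde{u}_{vw}$ the analogous column-block sum, again a projection) equal the same block-sum of entries of $u_E$. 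Since two projections $P, Q$ satisfying $aP = bQ$ with distinct positive scalars $a, b$ must both vanish, this forces $u_{vw} = 0$ whenever $v, w \in V_s$ have $N_s(v) \neq N_s(w)$. Substituting back into the weighted identity gives $\sum_{v \in V_s} u_{vw} = 1$, and combining with the block structure yields the column-sum identity within the $V_1$- and $V_3$-blocks; the $r$-analog handles the $V_2$-block.

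The main obstacle will be justifying the dual forms of Relations~\ref{def:qaut-GH-2} and~\ref{def:qaut-GH-3}. The antipode of $\AutGHBichon(\Gamma)$, available because Goswami-Hossain establish the construction as a compact quantum group, provides the cleanest tool; alternatively one could attempt a direct proof from only the $C^*$-algebraic axioms of \Cref{def:qaut-GH}, though this would require a significantly more delicate manipulation of the magic unitary identities.
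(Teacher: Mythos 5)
Your proposal is correct, and the first three quarters of it (well-definedness via Relations~\ref{def:qaut-GH-2}--\ref{def:qaut-GH-4}, projectivity of each $u_{vw}$ as a subsum of a row of $u_E$, and the row sums by Fubini over the fibers of $s$ or $r$) coincide with the paper's argument. Where you genuinely diverge is the column sum $\sum_v u_{vw} = 1$, which is indeed the only hard step. The paper handles it abstractly: having shown $u_V u_V^* = 1$ from the row sums, it verifies that $u_V$ is a representation, i.e.\ $\Delta(u_{vw}) = \sum_x u_{vx}\otimes u_{xw}$, and then invokes Woronowicz's result that a right-invertible representation of a compact quantum group is invertible, so $u_V^*u_V = 1$ falls out with no combinatorics. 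You instead apply the antipode $S(u_{ef}) = u_{fe}$ to Relations~\ref{def:qaut-GH-2} and~\ref{def:qaut-GH-3} to get their transposed versions, deduce block-diagonality of $u_V$ over sources/sinks/interior vertices, and extract $\sum_{v} N_s(v)\,u_{vw} = N_s(w)$ together with the vanishing $u_{vw}=0$ for $N_s(v)\neq N_s(w)$ (your scalar-versus-projection argument $aP = bQ \Rightarrow P=Q=0$ for $a\neq b$ is sound, and this step is the $\AutGHBichon$-analogue of the paper's \Cref{prop:N-intertwiner}). I checked the resulting case analysis over the three blocks and it closes correctly. Both routes rest on $\AutGHBichon(\Gamma)$ carrying a compact quantum group structure — the paper through the comultiplication and Woronowicz's invertibility theorem, you through the Hopf $*$-algebra antipode of \Cref{rem:counit-antipode} — so neither is more self-contained than the other; the paper's is shorter and avoids the block bookkeeping, while yours is more explicit and yields the extra structural information (block-diagonality and the degree relations) as a by-product. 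The one point you should spell out if you write this up is the verification that $\Delta$ (hence $S$) is actually well defined on the universal $C^*$-algebra of \Cref{def:qaut-GH}, which both proofs silently delegate to Goswami--Hossain.
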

\begin{proof}
First, note that at least one case in \Cref{def:GH-uV} applies to each vertex $v$, since 
$\Gamma$ has no isolated vertices. Further, each case does not depend
on the edge $e$ by Relation~\ref{def:qaut-GH-2} in \Cref{def:qaut-GH}.
To show that overlapping cases are well-defined, assume $v, w \in V$ 
such that $v$ is neither a source nor a sink. If 
$w$ is neither a source nor a sink, then both cases agree by Relation~\ref{def:qaut-GH-4}.
On the other hand, if $w$ is a source or a sink, then both cases yield 
${u}_{vw} = 0$, since the sum is empty in one case, while each ${u}_{ef} = 0$ by 
Relation~\ref{def:qaut-GH-3}
in the other case. Thus, the elements ${u}_{vw}$ are well-defined.

Next, we show that the matrix $u_V := {(u_{vw})}_{v,w \in V}$ is 
a magic unitary. Let $v, w \in V$ and assume without loss of generality 
that there exists an edge $e \in E$ with $v = s(e)$. 
Using \Cref{lem:multigraph-intertwiner-rel} and 
the fact that ${(u_{ef})}_{e,f \in E}$ is a magic unitary, we compute
\begin{align*}
  u_{vw}^*
  &=\sum_{\substack{f \in E \\ s(f) = w}} u_{e f}^*
  =\sum_{\substack{f \in E \\ s(f) = w}} u_{e f}
  = u_{vw}, \\
  u_{vw}^2
  &= \sum_{\substack{f_1 \in E \\ s(f_1) = w}} \sum_{\substack{f_2 \in E \\ s(f_2) = w}} 
  \underbrace{u_{e f_1} u_{e f_2}}_{\delta_{f_1 f_2} u_{e f_1}}
  =
  \sum_{\substack{f \in E \\ s(f) = w}} u_{e f}
  =
  u_{vw}
\end{align*}
and
\[
    \sum_{w \in V} u_{vw}
    = 
    \sum_{w \in V} \sum_{\substack{f \in E \\ s(f) = w}} u_{e f}
    =
    \sum_{f \in E} \underbrace{\sum_{\substack{w \in V \\ s(f) = w}} u_{e f}}_{u_{e f}}
    =
    \sum_{f \in E} u_{e f}
    =
    1.
\]
To show that the rows of $u_V$ sum to $1$, 
we use an argument from the proof of~\cite[Theorem 3.1]{wang98}.
By our previous computation, we have  
\[
  {(u_V u_V^*)}_{vw}
  = \sum_{x \in V} \underbrace{u_{vx} u_{wx}^*}_{\delta_{vw} u_{vx}} = \delta_{vw} \sum_{x \in V} u_{vx} = \delta_{vw}
  \quad \forall v, w \in V.
\]
Hence, $u_V$ is right-invertible with $u_V u_V^* = 1$.
If we show that $u_V$ is a representation of $\AutGHBichon(\Gamma)$, i.e.
\[
  \Delta(u_{vw}) = \sum_{x \in V} u_{vx} \otimes u_{xw}
  \quad 
  \forall v, w \in V,
\]
then~\cite[Proposition 3.2]{woronowicz87} implies that 
$u_V$ is also left-invertibe, i.e.\ $u_V^* u_V = 1$.
Thus, 
\[
  1 = {(u_V^* u_V)}_{vv}
  = \sum_{w \in V} u_{wv}^* u_{wv} 
  = \sum_{w \in V} u_{wv}
  \quad 
  \forall v \in V.
\]
Therefore, it remains to show that $u_V$ is a representation of $\AutGHBichon(\Gamma)$.
Again, let $v, w \in V$ and assume without loss of generality that there exists an edge $e \in E$ with 
$v = s(e)$. Then 

\begin{align*}
  \Delta(u_{vw})
  &= \sum_{\substack{f \in E \\ s(f) = w}} \Delta(u_{ef})
  = \sum_{\substack{f \in E \\ s(f) = w}} \sum_{g \in E} u_{eg} \otimes u_{gf}
  = \sum_{g \in E} u_{eg} \otimes \bigg( \sum_{\substack{f \in E \\ s(f) = w}}  u_{gf} \bigg) \\
  &= \sum_{g \in E} u_{eg} \otimes u_{s(g) w}
\end{align*}
and on the other hand
\begin{align*}
  \sum_{x \in V} u_{vx} \otimes u_{xw}
  = \sum_{x \in V} \sum_{\substack{f \in E \\ s(f) = x}} u_{e f} \otimes u_{xw}
  = \sum_{f \in E} \sum_{\substack{x \in V \\ s(f) = x}} u_{e f} \otimes u_{xw}
  = \sum_{\substack{f \in E}} u_{e f} \otimes u_{s(f) w}.
\end{align*}
Thus,
\[
  \Delta(u_{vw}) = \sum_{x \in V} u_{vx} \otimes u_{xw}
  \quad 
  \forall v, w \in V.
\]
\end{proof}

Using the previous lemmas, we can now show that our quantum
automorphism group agrees with the quantum automorphism group 
of Goswami-Hossain for multigraphs without isolated vertices. 

\begin{theorem}\label{thm:multigraph-GH}
Let $\Gamma$ be a multigraph without isolated vertices. Then 
\[
  \Aut^+(\Gamma) = \AutGHBichon(\Gamma).
\]
\end{theorem}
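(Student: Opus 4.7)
The plan is to construct mutually inverse morphisms of compact matrix quantum groups
\[
  \varphi \colon C(\Aut^+(\Gamma)) \to C(\AutGHBichon(\Gamma)),
  \qquad
  \psi \colon C(\AutGHBichon(\Gamma)) \to C(\Aut^+(\Gamma)),
\]
paralleling the strategy used in \Cref{thm:directed-graph-bichon} and \Cref{thm:simple-graph-bichon}.
The magic unitary $u_E$ of $\Aut^+(\Gamma)$ will be identified with the fundamental representation of $\AutGHBichon(\Gamma)$, while the entries of the vertex magic unitary $u_V$ will correspond to the elements $\widehat{u}_{vw}$ introduced in \Cref{def:GH-uV}.

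First, for $\varphi$, I would use the universal property of $C(\Aut^+(\Gamma))$, defining $\varphi(u_{vw}) := \widehat{u}_{vw}$ and $\varphi(u_{ef}) := \widehat{u}_{ef}$. The matrix $\widehat{u}_E$ is a magic unitary by definition, and \Cref{lem:GH-uV-magic} shows that $\widehat{u}_V := (\widehat{u}_{vw})_{v,w \in V}$ is also a magic unitary. The intertwiner relations $A_s \widehat{u}_E = \widehat{u}_V A_s$ and $A_r \widehat{u}_E = \widehat{u}_V A_r$ are then immediate from \Cref{lem:multigraph-intertwiner-rel}, since for every $v \in V$ and $e \in E$ the definition of $\widehat{u}_{s(e) v}$ is precisely $\sum_{f : s(f) = v} \widehat{u}_{ef}$ (and analogously for $r$), provided we pick $e$ as the witness that $s(e)$ is not a source; well-definedness of the choice is exactly the content of \Cref{lem:GH-uV-magic}.

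For $\psi$, I would use the universal property of $C(\AutGHBichon(\Gamma))$, defining $\psi(\widehat{u}_{ef}) := u_{ef}$. The four relations of \Cref{def:qaut-GH} must then be verified for $u_E$. Relation~1 (magic unitary) is automatic. Relation~2 follows at once from \Cref{lem:multigraph-intertwiner-rel}, since both sides become $u_{s(e_1) v} = u_{s(e_2) v}$ when $s(e_1) = s(e_2)$, and analogously for $r$. Relation~4 is the same argument: both sides equal $u_{s(e_1) v} = u_{r(e_2) v}$ when the common endpoint is neither a source nor a sink. The main obstacle is Relation~3, which is the only non-formal step. Here I would combine \Cref{lem:multigraph-intertwiner-rel} with \Cref{prop:N-intertwiner}: if $s(e)$ is neither source nor sink then $N_r(s(e)) > 0$, while $s(f)$ being a source gives $N_r(s(f)) = 0$, so \Cref{prop:N-intertwiner} forces $u_{s(e) s(f)} = 0$. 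Using \Cref{lem:multigraph-intertwiner-rel} backwards, $\sum_{g : s(g) = s(f)} u_{eg} = u_{s(e) s(f)} = 0$; since the summands are pairwise orthogonal projections whose sum vanishes, each $u_{eg} = 0$, and in particular $u_{ef} = 0$. The analogous argument with $r$ replacing $s$ handles the second half of Relation~3.

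Finally, I would verify that $\varphi$ and $\psi$ are mutually inverse by checking that the compositions act as the identity on the generators; on $\widehat{u}_{ef}$ this is by construction, and on $\widehat{u}_{vw}$ it follows because \Cref{def:GH-uV} expresses $\widehat{u}_{vw}$ through a sum of $\widehat{u}_{ef}$'s. Preservation of the comultiplication is a short computation on generators, as in the previous two theorems, using the formula $\Delta(u_{ef}) = \sum_{g \in E} u_{eg} \otimes u_{gf}$ and the defining formula for $u_{vw}$ to recover $\Delta(u_{vw}) = \sum_{x \in V} u_{vx} \otimes u_{xw}$ (which was in fact already established inside the proof of \Cref{lem:GH-uV-magic}). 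This gives an isomorphism of compact matrix quantum groups, proving $\Aut^+(\Gamma) = \AutGHBichon(\Gamma)$.
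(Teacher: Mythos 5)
Your proposal is correct and follows essentially the same route as the paper: the same pair of mutually inverse morphisms built from the universal properties, with $\widehat{u}_V$ handled by \Cref{def:GH-uV} and \Cref{lem:GH-uV-magic}, Relations~2 and~4 reduced to \Cref{lem:multigraph-intertwiner-rel}, and Relation~3 obtained from \Cref{prop:N-intertwiner}. The only (immaterial) difference is the last step of Relation~3, where you conclude $u_{ef}=0$ from positivity of the summands in a vanishing sum of projections, while the paper multiplies $u_{s(e)s(f)}$ against $u_{ef}$ and uses row-orthogonality of the magic unitary.
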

\begin{proof}
Let $\Gamma := (V, E)$. Denote with $u$ the fundamental representation of $\Aut^+(\Gamma)$
and with $\widehat{u}$ the fundamental representation of $\AutGHBichon(\Gamma)$.
Further, define the elements
\[
  \widehat{u}_{vw} := \begin{cases}
    \displaystyle \sum_{\substack{f \in E \\ s(f) = w}} \widehat{u}_{e f} & \text{if there exists $e \in E$ with $s(e) = v$}, \\
    \displaystyle \sum_{\substack{f \in E \\ r(f) = w}} \widehat{u}_{e f} & \text{if there exists $e \in E$ with $r(e) = v$},
  \end{cases}
\]
for all $v, w \in V$ as in \Cref{def:GH-uV}.
We begin by constructing a unital $*$-homomorphism 
\begin{alignat*}{3}
  \varphi \colon C(\Aut^+(\Gamma)) &\to C(\AutGHBichon(\Gamma)), && \\
  u_{vw} & \mapsto \widehat{u}_{vw} & \quad & \forall v, w \in V, \\
  u_{ef} & \mapsto \widehat{u}_{ef} & \quad & \forall e, f \in E 
\end{alignat*}
using the universal property of $C(\Aut^+(\Gamma))$. Hence, we have to 
show that the matrices
\[
\widehat{u}_V := {(\widehat{u}_{vw})}_{v,w\in V}, \qquad
\widehat{u}_E := {(\widehat{u}_{ef})}_{e,f\in E}
\]
satisfy the relations of \Cref{def:hypergraph-qaut}.
The matrix $\widehat{u}_E$ is a magic unitary 
by Relation~\ref{def:qaut-GH-1} of \Cref{def:qaut-GH}
and $\widehat{u}_V$ is a magic unitary by \Cref{lem:GH-uV-magic}.
Next, consider the relation $A_s \widehat{u}_E = \widehat{u}_V A_s$, 
which is equivalent to $A_s^* \widehat{u}_V = \widehat{u}_E A_s^*$ by \Cref{prop:star-intertwiner}.
However, this relation follows directly, since
\[
    {(A_s^* \widehat{u}_V)}_{ev}
    =
    \sum_{w \in V} {(A_s^*)}_{ew} \widehat{u}_{wv}
    =
    \widehat{u}_{s(e) v}
    =
    \sum_{\substack{f \in E \\ s(f) = v}} \widehat{u}_{ef}
    =
    \sum_{f \in E} \widehat{u}_{ef} {(A_s^*)}_{fv}
    =
    {(\widehat{u}_E A^*_s)}_{ev}
\]
for all $e \in E$ and $v \in V$.
Similarly, one shows $A_r \widehat{u}_E = \widehat{u}_V A_r$,
such that the $*$-homomorphism $\varphi$ exists.

Next, we construct the inverse $*$-homomorphism
\[
  \psi \colon C(\AutGHBichon(\Gamma)) \to C(\Aut^+(\Gamma)),
  \quad 
  \widehat{u}_{ef} \mapsto u_{ef}
  \quad 
  \forall e, f \in E
\]
using the universal property of $C(\AutGHBichon(\Gamma))$.
Thus, we have to show that the entries of $u_E$ satisfy the relations in 
\Cref{def:qaut-GH}. By \Cref{def:hypergraph-qaut}, $u_E$ is a magic unitary.
Further, we can
use \Cref{lem:multigraph-intertwiner-rel} to compute 
\begin{gather*}
  \sum_{\substack{f \in E \\ s(f) = v}} u_{e_1 f}
  =
  u_{s(e_1) v}
  =
  u_{s(e_2) v}
  =
  \sum_{\substack{f \in E \\ s(f) = v}} u_{e_2 f},
  \\
  \sum_{\substack{f \in E \\ r(f) = v}} u_{e_1 f}
  =
  u_{r(e_1) v}
  =
  u_{r(e_2) v}
  =
  \sum_{\substack{f \in E \\ r(f) = v}} u_{e_2 f}
\end{gather*}
for all $v \in V$ and $e_1, e_2 \in E$ with 
$s(e_1) = s(e_2)$ or $r(e_1) = r(e_2)$ respectively. 
Thus, Relation~\ref{def:qaut-GH-2} is satisfied.
Next, consider the Relation~\ref{def:qaut-GH-3} and let $e, f \in E$. If $s(e)$ is neither
a source or sink and $s(f)$ is a source, then $N_{r}(s(e)) > 0$ and $N_{r}(s(f)) = 0$
in the notation of \Cref{prop:N-intertwiner}. Therefore, \Cref{prop:N-intertwiner} implies that 
$u_{s(e)s(f)} = 0$, such that 
\[
  0 = u_{s(e)s(f)} u_{ef} = \sum_{\substack{ g \in E \\ s(g) = s(f)}} \underbrace{u_{eg} u_{ef}}_{\delta_{gf} u_{ef}} = u_{ef}.
\]
Similarly, one shows $u_{s(e)s(f)} = 0$ if $r(f)$ is a sink. Hence, Relation~\ref{def:qaut-GH-3} is satisfied.
Finally, \Cref{lem:multigraph-intertwiner-rel} implies
\[
  \sum_{\substack{f \in E \\ s(f) = v}} u_{e_1 f}
  =
  u_{s(e_1) v}
  =
  u_{r(e_2) v}
  =
  \sum_{\substack{f \in E \\ r(f) = v}} u_{e_2 f}
\]
for all $v \in V$ and $e_1, e_2 \in E$ with $s(e_1) = r(e_2)$.
Thus, Relation~\ref{def:qaut-GH-4} holds and the $*$-homomorphism $\varphi$ exist.

The $*$-homomorphisms $\varphi$ and $\phi$ are indeed inverse, since 
\begin{align*}
  u_{ef} \ & \longleftrightarrow \ \widehat{u}_{ef} \quad \forall e, f \in E, \\
  u_{vw} \ & \longleftrightarrow \ \widehat{u}_{vw} \quad \forall v, w \in V 
\end{align*}
by \Cref{lem:multigraph-intertwiner-rel} and \Cref{def:GH-uV}.
Further, $\psi$ is a morphism of compact quantum groups, because
\[
  \Delta(\psi(\widehat{u}_{ef}))
  = \sum_{g \in E} u_{eg} \otimes u_{gf}
  = \sum_{g \in E} \psi(\widehat{u}_{eg}) \otimes \psi(\widehat{u}_{gf})
  = (\psi \otimes \psi)(\Delta(\widehat{u}_{ef}))
\]
for all $e, f \in E$. Hence, $\Aut^+(\Gamma) = \AutGHBichon(\Gamma)$.
\end{proof}

\section{\texorpdfstring{Action on hypergraph $C^*$-algebras}{Action on hypergraph C*-algebras}}\label{sec:action}

\noindent In~\cite{schmidt18}, Schmidt-Weber showed that the quantum 
automorphism group $\AutBanica(\Gamma)$ of a directed graph $\Gamma$
acts maximally on the graph $C^*$-algebra $C^*(\Gamma)$.
In our notation, this action is given by 
\begin{alignat*}{3}
  \alpha(p_v) &= \sum_{w \in V} p_w \otimes u_{wv}
  \quad&&
  \forall v \in V, \\
  \alpha(s_{(v_1, w_1)}) &= \sum_{(v_2, w_2) \in E} s_{(v_2, w_2)} \otimes u_{v_2 v_1} u_{w_2 w_1}
  \quad&&
  \forall (v_1, w_1) \in E,
\end{alignat*}
where $\Gamma:= (V, E)$ is a direct graph and we flipped both tensor legs to agree with the notation in~\cite{wang98}.

In the following, we generalize this result to hypergraphs by 
showing that our quantum automorphism group $\Aut^+(\Gamma)$ of a hypergraph $\Gamma$
acts faithfully on the hypergraph $C^*$-algebra $C^*(\Gamma)$ from \Cref{def:hypergraph-Cstar-alg}.
Here, the action is given by 
\[
  \alpha(p_v) = \sum_{w \in V} p_w \otimes u_{wv}
  \quad
  \forall v \in V,
  \qquad
  \alpha(s_e) = \sum_{f \in E} s_f \otimes u_{fe}
  \quad
  \forall e \in E,
\]
Further, we show that this action is maximal when also considering a dual action
\[
  \alpha'(p_e) = \sum_{f \in E} p_f \otimes u_{fe}
  \quad
  \forall e \in E,
  \qquad
  \alpha'(s_v) = \sum_{w \in V} s_w \otimes u_{wv}
  \quad
  \forall v \in V
\]
on $C^*(\Gamma')$, where $\Gamma' := {(\Gamma^*)}^{\op}$ is obtained by flipping the source and range maps
in the dual hypergraph $\Gamma^*$.

\subsection{Existence of the action}

We begin by showing the existence of the action of $\Aut^+(\Gamma)$
on the hypergraph $C^*$-algebra $C^*(\Gamma)$, where the 
main step will be the construction of the underlying
$*$-homomorphism
\[
  \alpha \colon C^*(\Gamma) \to C^*(\Gamma) \otimes C(\Aut^+(\Gamma)).
\]
For this construction, we first recall a simple fact 
about $C^*$-algebras, which allows us to compare
tensor products by positive elements.

\begin{proposition}\label{prop:tensor-compare}
Let $\A$ be a $C^*$-algebra and $x, y, z \in \A$. 
If $x \leq y$ and $z \geq 0$, then $x \otimes z \leq y \otimes z$.
\end{proposition}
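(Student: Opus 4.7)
The approach is elementary and relies only on the characterization of positivity in $C^*$-algebras. The plan is to reduce the assertion to the fact that the tensor product of two positive elements in a $C^*$-algebra is positive in the (minimal) tensor product.

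First I would note that the hypothesis $x \leq y$ means exactly $y - x \geq 0$ in $\A$, and bilinearity of the tensor product gives
\[
  y \otimes z - x \otimes z = (y - x) \otimes z.
\]
Thus the proposition reduces to showing that $(y - x) \otimes z \geq 0$ in $\A \otimes \A$ (or whichever tensor product is intended; in the sequel it will be $C^*(\Gamma) \otimes C(\Aut^+(\Gamma))$).

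Next I would invoke the standard $C^*$-algebraic fact that every positive element is a square: since $y - x \geq 0$ we may write $y - x = a^* a$ for some $a \in \A$, and similarly $z = b^* b$ for some $b \in \A$. Then
\[
  (y - x) \otimes z = (a^* a) \otimes (b^* b) = (a \otimes b)^*(a \otimes b),
\]
which is manifestly positive in the minimal tensor product. Combining this with the previous step yields $x \otimes z \leq y \otimes z$, as desired.

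The only point requiring a moment's care is that the identity $(a^* a) \otimes (b^* b) = (a \otimes b)^*(a \otimes b)$ does hold in the minimal tensor product because the involution and multiplication in $\A \otimes \B$ are defined on elementary tensors exactly by $(a \otimes b)^* = a^* \otimes b^*$ and $(a_1 \otimes b_1)(a_2 \otimes b_2) = a_1 a_2 \otimes b_1 b_2$, and these extend to the $C^*$-completion. There is no real obstacle here; the proof is essentially one line once the positive-square decomposition is invoked.
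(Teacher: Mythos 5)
Your proof is correct and follows exactly the same route as the paper: write $y - x = a^*a$ and $z = b^*b$, then observe $(y-x)\otimes z = (a\otimes b)^*(a\otimes b) \geq 0$. No issues.
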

\begin{proof}
Let $x, y, z \in \A$ with $x \leq y$ and $z \geq 0$. Then there exist $a, b \in \A$ such that 
$y - x = a^*a$ and $z = b^*b$. Thus,
\[
  y \otimes z - x \otimes z = (y - x) \otimes z = a^*a \otimes b^*b = {(a \otimes b)}^* (a \otimes b) \geq 0,
\]
such that $x \otimes z \leq y \otimes z$.
\end{proof}

Next, we construct the $*$-homomorphism $\alpha$
in the following lemma and then show in \Cref{thm:action-existence} that it defines indeed an 
action of $\Aut^+(\Gamma)$ on $C^*(\Gamma)$.

\begin{lemma}\label{lemm:action-alpha-existence}
Let $\Gamma := (V, E)$ be a hypergraph and denote with $u$ the fundamental
representation of $\Aut^+(\Gamma)$. Then
there exists a unital $*$-homomorphism 
\[
  \alpha \colon C^*(\Gamma) \to C^*(\Gamma) \otimes C(\Aut^+(\Gamma))
\] 
with 
\[
  \alpha(p_v) = \sum_{w \in V} p_w \otimes u_{wv}
  \quad
  \forall v \in V,
  \qquad
  \alpha(s_e) = \sum_{f \in E} s_f \otimes u_{fe}
  \quad
  \forall e \in E.
\]
\end{lemma}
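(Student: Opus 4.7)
The plan is to invoke the universal property of $C^*(\Gamma)$: it suffices to exhibit in $C^*(\Gamma) \otimes C(\Aut^+(\Gamma))$ a family of mutually orthogonal projections $P_v := \sum_{w \in V} p_w \otimes u_{wv}$ together with partial isometries $S_e := \sum_{f \in E} s_f \otimes u_{fe}$ that satisfy the three relations from \Cref{def:hypergraph-Cstar-alg}. The self-adjointness of $P_v$ and the orthogonality $P_v P_{v'} = \delta_{vv'} P_v$ follow immediately from $p_w p_{w'} = \delta_{ww'} p_w$ together with the magic unitary relations on $u_V$; no intertwiner relation is needed at this stage.

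For relation \ref{def:hypergraph-Cstar-alg-R1}, expanding $S_e^* S_f$ and applying $s_{e'}^* s_{f'} = \delta_{e'f'} \sum_{v \in r(e')} p_v$ together with the magic unitary identity $u_{e'e} u_{e'f} = \delta_{ef} u_{e'e}$ collapses the double sum to
\[
  S_e^* S_f = \delta_{ef} \sum_{v \in V} p_v \otimes \bigg(\sum_{\substack{e' \in E\\ v \in r(e')}} u_{e'e}\bigg);
\]
the intertwiner relation $A_r u_E = u_V A_r$, as phrased in \Cref{rem:hypergraph-qaut-rels}, turns the inner bracket into $\sum_{w \in r(e)} u_{vw}$, and re-grouping gives $\delta_{ef} \sum_{v \in r(e)} P_v$. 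Relation \ref{def:hypergraph-Cstar-alg-R2} is analogous: after $S_e S_e^* = \sum_{e'} s_{e'} s_{e'}^* \otimes u_{e'e}$, one uses $s_{e'} s_{e'}^* \leq \sum_{v \in s(e')} p_v$ via \Cref{prop:tensor-compare} (applicable since each $u_{e'e}$ is a projection, hence positive), and the intertwiner $A_s u_E = u_V A_s$ identifies the resulting upper bound with $\sum_{v \in s(e)} P_v$.

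The main obstacle is relation \ref{def:hypergraph-Cstar-alg-R3}. Fix a non-sink vertex $v$. To bound each summand $p_w \otimes u_{wv}$ of $P_v$ by $\big(\sum_{e' : w \in s(e')} s_{e'} s_{e'}^*\big) \otimes u_{wv}$ one needs $w$ itself to be a non-sink. This is reconciled by \Cref{prop:N-intertwiner} applied to the source map: since $N_s(w) = 0 \neq N_s(v)$ whenever $w$ is a sink and $v$ is not, the corresponding $u_{wv}$ vanishes, so only non-sink $w$ contribute. Summing the resulting inequalities (again via \Cref{prop:tensor-compare}) and re-grouping yields
\[
  P_v \leq \sum_{e' \in E} s_{e'} s_{e'}^* \otimes \bigg(\sum_{w \in s(e')} u_{wv}\bigg),
\]
and here the dual intertwiner relation $A_s^* u_V = u_E A_s^*$ from \Cref{rem:hypergraph-qaut-rels} rewrites the bracketed sum as $\sum_{e : v \in s(e)} u_{e'e}$, so regrouping produces exactly $\sum_{e : v \in s(e)} S_e S_e^*$. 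Once these four checks are in place, the universal property of $C^*(\Gamma)$ furnishes the required unital $*$-homomorphism $\alpha$.
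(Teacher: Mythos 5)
Your proof is correct and follows essentially the same route as the paper's: the universal property of $C^*(\Gamma)$, the magic-unitary relations for the mutual orthogonality of the $P_v$, the intertwiner identities of \Cref{rem:hypergraph-qaut-rels} for relations (1)--(3) of \Cref{def:hypergraph-Cstar-alg}, \Cref{prop:tensor-compare} for the two inequalities, and \Cref{prop:N-intertwiner} to discard the sink vertices in relation (3). The only step you leave implicit is that the $S_e$ are partial isometries; the paper verifies $S_e S_e^* S_e = S_e$ directly, but this also follows from your relation (1) with $e = f$, since $S_e^* S_e$ is then a sum of mutually orthogonal projections and hence itself a projection.
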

\begin{proof}
We use the universal property of $C^*(\Gamma)$ 
to construct the map $\alpha$ from the statement.
Thus, we have to show that $\alpha(p_v)$ are orthogonal projections, 
$\alpha(s_e)$ are partial isometries and both satisfy the relations
from \Cref{def:hypergraph-Cstar-alg}.  
Recall the magic unitary relations  
of $u_V$ and $u_E$ and the intertwiner relations of $A_s$ and $A_r$ from \Cref{rem:hypergraph-qaut-rels}.
Then, we can show that $\alpha(p_v)$ are orthogonal projections by computing
\begin{align*}
  \alpha(p_{v_1}) \alpha(p_{v_2}) 
  &= \sum_{w_1, w_2 \in V} \underbrace{p_{w_1} p_{w_2}}_{\delta_{w_1 w_2} p_{w_1}} \otimes u_{w_1 v_1} u_{w_2 v_2} \\
  &= \sum_{w \in V} p_w \otimes \underbrace{u_{w v_1} u_{w v_2}}_{\delta_{v_1 v_2} u_{w v_1}} 
  = \delta_{v_1 v_2} \sum_{w \in V} p_w \otimes u_{w v_1} 
  = \delta_{v_1 v_2} \, \alpha(p_{v_1})
\end{align*}
for all $v_1, v_2 \in V$ and 
\begin{align*}
  {\alpha(p_v)}^*
  = \sum_{w \in V} p_w^* \otimes u_{w v}^*  
  = \sum_{w \in V} p_w \otimes u_{w v} 
  = \alpha(p_v)
\end{align*}
for all $v \in V$. Similarly, we show that $\alpha(s_e)$ are partial isometries by computing
\begin{align*}
  \alpha(s_e){\alpha(s_e)}^*\alpha(s_e)
  &= \sum_{f_1, f_2, f_3 \in E} s_{f_1} s_{f_2} s_{f_3} \otimes \underbrace{u_{f_1 e} u_{f_2 e}^* u_{f_3 e}}_{\delta_{f_1 f_2} \delta_{f_1 f_3} u_{f_1 e}} \\
  &= \sum_{f \in E} \underbrace{s_f s_f^* s_f}_{s_f} \otimes u_{f e}
  = \sum_{f \in E} s_f \otimes u_{f e} 
  = \alpha(s_e)
\end{align*}
for all $e \in E$. Next, consider Relation~\ref{def:hypergraph-Cstar-alg-R1} from \Cref{def:hypergraph-Cstar-alg}, which states that
\begin{align*}
  s_e^* s_f = \delta_{ef} \sum_{\substack{v \in V \\ v \in r(e)}} p_v \quad \forall e,f \in E.
\end{align*} 
When applying $\alpha$ to the left side, we obtain
\begin{align*}
  {\alpha(s_e)}^* \alpha(s_f)
  &= \sum_{g_1, g_2 \in E} \underbrace{s_{g_1}^* s_{g_2}}_{\text{Rel.\ \ref{def:hypergraph-Cstar-alg-R1}}} \otimes u_{g_1 e}^* u_{g_2 f} \\
  &= \sum_{g \in E} \sum_{\substack{v \in V \\ v \in r(g)}} p_v \otimes \underbrace{u_{g e} u_{g f}}_{\delta_{e f} u_{g e}}
  = \delta_{ef} \sum_{v \in V} \sum_{\substack{g \in E \\ v \in r(g)}} p_v \otimes u_{g e}.
\end{align*}
Using \Cref{rem:hypergraph-qaut-rels}, we have
\[
  \sum_{\substack{g \in E \\ v \in r(g)}} u_{ge}
  =
  \sum_{\substack{w \in V \\ w \in r(e)}} u_{vw}
  \quad 
  \forall v \in V, \, e \in E,
\]
which can be used to further rewrite ${\alpha(s_e)}^* \alpha(s_f)$ to
\begin{align*}
  {\alpha(s_e)}^* \alpha(s_f) 
  &= \delta_{e f} \sum_{v \in V} \sum_{\substack{w \in V \\ w \in r(e)}} p_v \otimes u_{v w} 
  = \delta_{e f} \sum_{\substack{w \in V \\ w \in r(e)}} \sum_{v \in V} p_v \otimes u_{v w} \\
  &= \delta_{e f} \sum_{\substack{w \in V \\ w \in r(e)}} \alpha(p_w).
\end{align*}
Thus, Relation~\ref{def:hypergraph-Cstar-alg-R1} is satisfied.
Next, consider Relation~\ref{def:hypergraph-Cstar-alg-R2}, which is given by
\[
  s_e s_e^* \leq \sum_{\substack{v \in V \\ v \in s(e)}} p_v \quad \forall e \in E.
\]
As before, we apply $\alpha$ to the left side and compute
\[
  \alpha(s_e) {\alpha(s_e)}^*
  = \sum_{f_1, f_2 \in E} s_{f_1} s_{f_2}^* \otimes \underbrace{u_{f_1 e} u_{f_2 e}^*}_{\delta_{f_1 f_2} u_{f_1 e}}  \\
  = \sum_{f \in E} s_{f} s_{f}^* \otimes u_{f e}. 
\]
Since each $u_{fe} \geq 0$, we can use \Cref{prop:tensor-compare} with Relation~\ref{def:hypergraph-Cstar-alg-R2}
to obtain
\[
  \alpha(s_e) {\alpha(s_e)}^*
  \leq 
  \sum_{f \in E} \bigg( \sum_{\substack{v \in V \\ v \in s(f)}} p_v \bigg) \otimes u_{f e}
  =
  \sum_{v \in V } \sum_{\substack{f \in E \\ v \in s(f)}} p_v \otimes u_{f e}.
\]
By \Cref{rem:hypergraph-qaut-rels}, we have 
\[
  \sum_{\substack{f \in E \\ v \in s(f)}} u_{f e} 
  =
  \sum_{\substack{w \in V \\ w \in s(e)}} u_{w v}
  \quad 
  \forall v \in V, \, e \in E,
\]
such that 
\[
  \alpha(s_e) {\alpha(s_e)}^* 
  \leq 
  \sum_{v \in V } \sum_{\substack{w \in V \\ w \in s(e)}} p_v \otimes u_{vw}
  =
  \sum_{\substack{w \in V \\ w \in s(e)}} \sum_{v \in V } p_v \otimes u_{vw} 
  =
  \sum_{\substack{w \in V \\ w \in s(e)}} \alpha(p_w).
\]
Hence, Relation~\ref{def:hypergraph-Cstar-alg-R2} is satisfy.
Finally, consider Relation~\ref{def:hypergraph-Cstar-alg-R3}, which states that 
\[
  p_v \leq \sum_{\substack{e \in E \\ v \in s(e)}} s_e s_e^*,
\]
for all $v \in V$ which are not a sink. Let $v \in V$ be not a sink. 
By \Cref{prop:N-intertwiner}, we have $u_{vw} = 0$ for all $w \in V$ which are a sink, since 
$N_s(v) > 0$ if $v$ is not a sink and $N_s(w) = 0$ if $w$ is a sink.
Therefore,
\[
  \alpha(p_v) 
  = \sum_{w \in V} p_w \otimes u_{w v}
  = \sum_{\substack{w \in V \\ \text{$w$ $\neq$ sink}}} p_w \otimes u_{w v}.
\]
Applying \Cref{prop:tensor-compare} with Relation~\ref{def:hypergraph-Cstar-alg-R3} yields
\[
  \alpha(p_v) 
  \leq \sum_{\substack{w \in V \\ \text{$w$ $\neq$ sink}}} \bigg( \sum_{\substack{e \in E \\ w \in s(e)}} s_e s_e^* \bigg) \otimes u_{w v}
  = \sum_{e \in E} \sum_{\substack{w \in V \\ w \in s(e)}} s_e s_e^* \otimes u_{w v},
\]
where we used again the fact that $u_{v w} = 0$ if $w$ is a sink.
On the other hand, we have
\begin{align*}
  \sum_{\substack{f \in E \\ v \in s(f)}}
  \alpha(s_f) {\alpha(s_f)}^* 
  =
  \sum_{e_1, e_2 \in E} \sum_{\substack{f \in E \\ v \in s(f)}}
  s_{e_1} s_{e_2}^* \otimes \underbrace{u_{e_1 f} u_{e_2 f}^*}_{\delta_{e_1 e_2} u_{e_1 f}}
  = 
  \sum_{e \in E} \sum_{\substack{f \in E \\ v \in s(f)}}
  s_e s_e^* \otimes u_{e f}.
\end{align*}
By \Cref{rem:hypergraph-qaut-rels}, we have 
\[
  \sum_{\substack{w \in V \\ w \in s(e)}}  u_{w v}
  =
  \sum_{\substack{f \in E \\ v \in s(f)}} u_{e f}
  \quad 
  \forall v \in V, \, e \in E,
\]
which implies 
\begin{align*}
  \alpha(p_v) 
  \leq 
  \sum_{e \in E} \sum_{\substack{w \in V \\ w \in s(e)}} 
   s_e s_e^* \otimes u_{w v}
  =
  \sum_{e \in E} \sum_{\substack{f \in E \\ v \in s(f)}}
  s_e s_e^* \otimes u_{e f}
  =  
  \sum_{\substack{f \in E \\ v \in s(f)}}
  \alpha(s_f) {\alpha(s_f)}^*.
\end{align*}
\end{proof}

Next, we show that the previous $*$-homomorphism $\alpha$ defines 
a faithful action in the sense of \Cref{def:qgroup-action} and \Cref{def:qgroup-faithful}.

\begin{theorem}\label{thm:action-existence}
Let $\Gamma$ be a hypergraph. Then
$\Aut^+(\Gamma)$ acts faithfully on $C^*(\Gamma)$ via the map $\alpha$ from \Cref{lemm:action-alpha-existence}.
\end{theorem}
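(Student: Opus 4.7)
The plan is to verify separately that $\alpha$ is an action in the sense of \Cref{def:qgroup-action}, and then that it is faithful in the sense of \Cref{def:qgroup-faithful}. The $*$-homomorphism $\alpha$ itself has already been produced in \Cref{lemm:action-alpha-existence}, so I only need to check the three conditions of \Cref{def:qgroup-action} and the faithfulness condition.

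For the coaction property $(\alpha \otimes \id) \circ \alpha = (\id \otimes \Delta) \circ \alpha$, since both sides are $*$-homomorphisms it suffices to compare them on the generators $p_v$ and $s_e$. On $p_v$ I would expand
\[
  (\alpha \otimes \id)\alpha(p_v) = \sum_{w,x \in V} p_x \otimes u_{xw} \otimes u_{wv},
  \qquad
  (\id \otimes \Delta)\alpha(p_v) = \sum_{w \in V} p_w \otimes \Delta(u_{wv}),
\]
and use $\Delta(u_{wv}) = \sum_x u_{wx} \otimes u_{xv}$ followed by relabeling to see the two agree; the computation for $s_e$ is identical with $E$ replacing $V$. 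For condition (2), take $\B$ to be the (non-closed) $*$-subalgebra of $C^*(\Gamma)$ generated by $\{p_v, s_e\}$, which is dense by definition of $C^*(\Gamma)$; since the formulas for $\alpha(p_v)$ and $\alpha(s_e)$ are finite sums with second-tensor-factor entries in the $*$-algebra generated by the $u_{ij}$'s, we have $\alpha(\B) \subseteq \B \otimes \OO(\Aut^+(\Gamma))$. For condition (3), applying $\id \otimes \varepsilon$ using $\varepsilon(u_{ij}) = \delta_{ij}$ from \Cref{rem:counit-antipode} immediately gives $(\id \otimes \varepsilon)\alpha(p_v) = p_v$ and $(\id \otimes \varepsilon)\alpha(s_e) = s_e$.

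For faithfulness, suppose $H$ is a quotient of $\Aut^+(\Gamma)$ via a surjective morphism $\pi \colon C(\Aut^+(\Gamma)) \to C(H)$ such that $\alpha$ factors through $C^*(\Gamma) \otimes C(H) \hookrightarrow C^*(\Gamma) \otimes C(\Aut^+(\Gamma))$. Viewing $\alpha(p_v) = \sum_{w \in V} p_w \otimes u_{wv}$ inside $C^*(\Gamma) \otimes C(H)$ and applying slice maps of the form $\varphi \otimes \id$ for suitable linear functionals $\varphi \in C^*(\Gamma)^*$ dual to the family $\{p_w\}_{w \in V}$, I would conclude $u_{wv} \in C(H)$ for all $v, w \in V$. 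Running the same argument with $\alpha(s_e) = \sum_{f \in E} s_f \otimes u_{fe}$ and functionals dual to $\{s_f\}_{f \in E}$ yields $u_{ef} \in C(H)$ for all $e, f \in E$. Since $C(\Aut^+(\Gamma))$ is generated by the $u_{vw}$ and the $u_{ef}$, this forces $C(H) = C(\Aut^+(\Gamma))$.

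The main obstacle is that the slice-map argument above needs the families $\{p_v\}_{v \in V}$ and $\{s_e\}_{e \in E}$ to be linearly independent in $C^*(\Gamma)$, so that the required dual functionals actually exist. For finite classical graph $C^*$-algebras this is standard, but for hypergraph $C^*$-algebras as in \Cref{def:hypergraph-Cstar-alg} it requires invoking (or adapting) the basic structural results of Trieb-Weber-Zenner \cite{trieb24}; once that is in hand, the slice-map extraction is routine and the rest of the argument goes through as sketched.
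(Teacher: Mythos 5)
Your proposal follows essentially the same route as the paper's proof: the three coaction axioms are checked on the generators exactly as you describe, and faithfulness is obtained by recovering the matrix entries $u_{wv}$ and $u_{ef}$ from $\alpha(p_v)$ and $\alpha(s_e)$ via linear independence of the families $\{p_v\}$ and $\{s_e\}$. The only difference is that the paper simply asserts this linear independence (``as orthogonal projections'' resp.\ ``as partial isometries with orthogonal ranges''), whereas you correctly flag it as the point requiring the structural input from \cite{trieb24}; otherwise the two arguments coincide.
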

\begin{proof}
Let $\Gamma := (V, E)$ and define $\B \subseteq C^*(\Gamma)$ as
the $*$-subalgebra generated by $p_v$ for all $v \in V$ and $p_e$
for all $e \in E$. Then $\B$ is dense in $C^*(\Gamma)$ and 
\[
  \alpha(\B) \subseteq \B \otimes \OO(\Aut^+(\Gamma))
\] 
by the definition of $\alpha$. Next, let $v \in V$. Then
\[
  (\alpha \otimes \id)(\alpha(p_v)) 
  =
  (\id \otimes \Delta)(\alpha(p_v)),
\]
since
\[
  \sum_{w \in V} \alpha(p_w) \otimes u_{wv} 
  = \sum_{w_1, w_2 \in V} p_{w_2} \otimes u_{w_2 w_1} \otimes u_{w_1 v} 
  = \sum_{w_2 \in V}  p_{w_2} \otimes \Delta(u_{w_2 v}).
\]
Further, we compute
\[
  (\id \otimes \varepsilon)(\alpha(p_v))
  = (\id \otimes \varepsilon)\bigg(\sum_{w \in V} p_{w} \otimes u_{wv} \bigg) 
  = \sum_{w \in V} p_w \cdot \delta_{wv}
  = p_v.
\]
The previous computations also show that 
\begin{alignat*}{3}
  (\alpha \otimes \id)(\alpha(s_e)) &= 
  (\id \otimes \Delta)(\alpha(s_e)) 
  & \qquad &
  \forall e \in E, \\
  (\id \otimes \varepsilon)(\alpha(s_e)) &=
  s_e
  & \qquad &
  \forall e \in E
\end{alignat*}
by replacing $p_v$ with $s_e$. Hence,
\begin{align*}
  (\id \otimes \alpha) \circ \alpha &= (\Delta \otimes \id) \circ \alpha, \\
  (\varepsilon \otimes \id) \circ \alpha|_{\B} &= \id.
\end{align*}
Thus, $\alpha$ defines an action of $\Aut^+(\Gamma)$ on $C^*(\Gamma)$.
To show that $\alpha$ is faithful, assume there exists a quotient $G$ of $\Aut^+(\Gamma)$
such that $\alpha|_{C(G)}$ is also an action on $C^*(\Gamma)$.
Then 
\begin{gather*}
  \alpha|_{C(G)}(p_v) = \sum_{w \in V} p_w \otimes u_{wv} \in C^*(\Gamma) \otimes C(G) 
  \qquad 
  \forall v \in V, \\
  \alpha|_{C(G)}(s_e) = \sum_{f \in E} s_f \otimes u_{fe} \in C^*(\Gamma) \otimes C(G) 
  \qquad 
  \forall e \in E.
\end{gather*}
The representation of $\alpha|_{C(G)}(p_v)$ with respect to $p_v$ is unique since the $p_v$ are linearly independent 
as orthogonal projections. Thus, $u_{wv} \in C(G)$ for all $v, w \in V$. 
Similarly, $u_{ef} \in C(G)$ for all $e, f \in E$, since the $s_e$ are 
linearly independent as partial isometries with orthogonal ranges.
Hence, $C(G) = C(\Aut^+(\Gamma))$, which shows that $\alpha$ is faithful.
\end{proof}

Although our action appears to be different from the action of Schmidt-Weber, 
the following remark shows that it reduces to the action of Schmidt-Weber in the case of classical directed graphs. 
In particular, it justifies the special form of the action retrospectively, since the action does not appear to be canonical from the point of view of classical directed graphs.

\begin{remark}\label{rem:action-generalize}
  Let $\Gamma:= (V, E)$ be a directed graph. By the proof of \Cref{thm:directed-graph-bichon},
  we have $\Aut^+(\Gamma) = \AutBichon(\Gamma)$ via 
  \begin{alignat*}{3}
    u_{vw} & \ \mapsto \ \widehat{u}_{vw} && \quad \forall v \in V, \\
    u_{(v_1, v_2)(w_1, w_2)} & \ \mapsto \ \widehat{u}_{v_1 w_1} \widehat{u}_{v_2 w_2} &&\quad \forall (v_1,v_2), (w_1, w_2) \in E,
  \end{alignat*}
  where $u$ denotes the fundamental representation of $\Aut^+(\Gamma)$
  and $\widehat{u}$ denotes the fundamental representation of $\AutBichon(\Gamma)$.
  Under this isomorphism, the action $\alpha$ from \Cref{thm:action-existence} has the 
  form 
  \begin{alignat*}{3}
    \alpha(p_v) &= \sum_{w \in V} p_w \otimes \widehat{u}_{wv}
    \quad&&
    \forall v \in V, \\
    \alpha(s_{(v_1, w_1)}) &= \sum_{(v_2, w_2) \in E} s_{(v_2, w_2)} \otimes \widehat{u}_{v_2 v_1} \widehat{u}_{w_2 w_1}
    \quad&&
    \forall (v_1, w_1) \in E,
  \end{alignat*}
  which is the same form as in~\cite{schmidt18}.
  Hence, we obtain the action of Schmidt-Weber for $\AutBichon(\Gamma)$ as a special case.
  Note that it was already shown in~\cite{joardar18} that $\AutBichon(\Gamma)$ acts on 
  $C^*(\Gamma)$ via this action.
\end{remark}

\subsection{Maximality of the action}

In~\cite{schmidt18}, Schmidt-Weber showed that their action is maximal in the sense that 
$\AutBanica(\Gamma)$ is the largest quantum groups which acts on the graph $C^*$-algebra $C^*(\Gamma)$
via the map $\alpha$ decribed before.

However, if $\Gamma$ is a classical directed graph, then our quantum automorphism
group agress with $\AutBichon(\Gamma)$ by \Cref*{thm:directed-graph-bichon} and 
our action $\alpha$ in \Cref{thm:action-existence} agrees with the action of Schmidt-Weber by
\Cref{rem:action-generalize}.
Since $\AutBichon(\Gamma) \subset \AutBanica(\Gamma)$ for some direct graphs, our action on hypergraph $C^*$-algebras is not maximal,
even in the case of directed graphs.

In the following, we show that we can obtain maximality of our action by including an additional assumption. To 
formulate this assumption, we first introduce some notation. 

\begin{definition}\label{def:gamma-prime}
Let $\Gamma:= (V, E)$ be a hypergraph. Then 
define $\Gamma' := (E, V)$ with 
\[
  s'(e) := \{ v \in V ~|~ v \in r(e) \},
  \quad
  r'(e) := \{ v \in V ~|~ v \in s(e) \}
  \quad 
  \forall e \in E.
\]
Note that $\Gamma' = {(\Gamma^{\op})}^* = {(\Gamma^*)}^{\op}$, where 
$\Gamma^{\op}$ and $\Gamma^*$ are the opposite and dual hypergraph
from \Cref{def:opposite-hypergraph} and \Cref{def:dual-hypergraph}.
\end{definition}

Using the results in \Cref{sec:opp-dual-hypergraphs}, it follows directly that 
$\Aut^+(\Gamma)$ does not only act on $C^*(\Gamma)$ but also on $C^*(\Gamma')$ 
in a natural way.

\begin{proposition}\label{lem:gamma-prime-action}
Let $\Gamma:= (V, E)$ be a hypergraph.
Then $\Aut^+(\Gamma)$ acts faithfully on $C^*(\Gamma')$ via
\[
  \alpha \colon C^*(\Gamma') \to C^*(\Gamma') \otimes C(\Aut^+(\Gamma))
\] 
with 
\[
  \alpha(p_e) = \sum_{f \in E} p_f \otimes u_{fe}
  \quad
  \forall e \in E,
  \qquad
  \alpha(s_v) = \sum_{w \in V} s_w \otimes u_{wv}
  \quad
  \forall v \in V.
\]
\end{proposition}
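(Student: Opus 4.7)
The strategy is to deduce the statement directly from the earlier Theorem~\ref{thm:action-existence}, applied to the hypergraph $\Gamma'$ in place of $\Gamma$, and then to transport the resulting action across the identifications $\Aut^+(\Gamma) = \Aut^+(\Gamma^*) = \Aut^+((\Gamma^*)^{\op})$ established in \Cref{sec:opp-dual-hypergraphs}. No new relations have to be checked by hand; everything is obtained by relabeling vertices and edges.

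First I would note that $\Gamma' := (\Gamma^*)^{\op}$ is itself a hypergraph in the sense of \Cref{def:hypergraph}, with vertex set $E$ and edge set $V$. Thus \Cref{thm:action-existence} applied to $\Gamma'$ provides a faithful action
\[
  \widetilde{\alpha} \colon C^*(\Gamma') \to C^*(\Gamma') \otimes C(\Aut^+(\Gamma'))
\]
whose formulas on the two families of generators of $C^*(\Gamma')$ are, by the statement of that theorem applied to the vertex set $E$ and edge set $V$ of $\Gamma'$,
\[
  \widetilde{\alpha}(p_e) = \sum_{f \in E} p_f \otimes \widetilde{u}_{fe}
  \quad \forall e \in E,
  \qquad
  \widetilde{\alpha}(s_v) = \sum_{w \in V} s_w \otimes \widetilde{u}_{wv}
  \quad \forall v \in V,
\]
where $\widetilde{u}$ denotes the fundamental representation of $\Aut^+(\Gamma')$.

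Next I would invoke the isomorphisms of compact matrix quantum groups from \Cref{prop:opposite-isom} and the dual isomorphism proved just afterwards, which together give
\[
  \Aut^+(\Gamma) \;=\; \Aut^+(\Gamma^*) \;=\; \Aut^+((\Gamma^*)^{\op}) \;=\; \Aut^+(\Gamma').
\]
Tracing through the definitions of these isomorphisms, the generator $\widetilde{u}_{fe}$ of $C(\Aut^+(\Gamma'))$ (indexed by the vertex set $E$ of $\Gamma'$) is sent to the generator $u_{fe}$ of $C(\Aut^+(\Gamma))$ (indexed by the edge set $E$ of $\Gamma$), and similarly $\widetilde{u}_{wv}$ is sent to $u_{wv}$. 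Pulling $\widetilde{\alpha}$ back along the induced $*$-isomorphism $C(\Aut^+(\Gamma')) \to C(\Aut^+(\Gamma))$ therefore yields a unital $*$-homomorphism
\[
  \alpha \colon C^*(\Gamma') \to C^*(\Gamma') \otimes C(\Aut^+(\Gamma))
\]
given by exactly the formulas in the statement of the proposition.

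Finally, since pulling back along an isomorphism of compact matrix quantum groups preserves each of the three conditions in \Cref{def:qgroup-action} as well as the faithfulness condition in \Cref{def:qgroup-faithful}, the map $\alpha$ remains a faithful action. I do not anticipate a real obstacle; the only subtlety is keeping the bookkeeping of which index set plays the role of ``vertices'' and which plays the role of ``edges'' straight when transferring between $\Gamma$ and $\Gamma'$, and checking that the incidence matrices $A_{s'}, A_{r'}$ of $\Gamma'$ agree with $A_r^*, A_s^*$ of $\Gamma$ so that the relations used in the proof of \Cref{lemm:action-alpha-existence} translate correctly. If one prefers to avoid quoting the earlier isomorphisms, the same proof can be executed verbatim by repeating the argument of \Cref{lemm:action-alpha-existence} and \Cref{thm:action-existence} with $s,r$ swapped with $r^*,s^*$ and with the roles of $u_V$ and $u_E$ interchanged, using the intertwiner relations in \Cref{rem:hypergraph-qaut-rels} for $A_s^*$ and $A_r^*$ in place of $A_s$ and $A_r$.
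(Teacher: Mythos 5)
Your proposal is correct and follows essentially the same route as the paper: apply \Cref{thm:action-existence} to the hypergraph $\Gamma'$ and then transport the resulting faithful action along the isomorphisms $\Aut^+(\Gamma') = \Aut^+(\Gamma^*) = \Aut^+(\Gamma)$ from \Cref{sec:opp-dual-hypergraphs}, under which $\widehat{u}_V \leftrightarrow u_E$ and $\widehat{u}_E \leftrightarrow u_V$. Your extra remarks on the index bookkeeping and on faithfulness being preserved under the identification are accurate and only make explicit what the paper leaves implicit.
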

\begin{proof}
Denote with $u$ the fundamental
representation of $\Aut^+(\Gamma)$ 
and with $\widehat{u}$ the fundamental representation 
$\Aut^+(\Gamma)$. By \Cref{prop:opposite-isom} and \Cref{prop:dual-isom}
we have 
\[
  \Aut^+(\Gamma') = \Aut^+\left({(\Gamma^*)}^{\op}\right)
  = \Aut^+(\Gamma^*) = \Aut^+(\Gamma)
\]
via 
\[
  \widehat{u}_V \ \longleftrightarrow \ u_E,
  \qquad
  \widehat{u}_E \ \longleftrightarrow \ u_V.
\]
The statement then follows by applying this isomorphism
to the action of $\Aut^+(\Gamma')$ on $C^*(\Gamma')$ from \Cref{thm:action-existence}.
\end{proof}

The main observation for proving maximality
is the following lemma, which
shows that at least
some of the relations in \Cref{def:hypergraph-qaut}
can be recovered from the action on $C^*(\Gamma)$.

\begin{lemma}\label{lem:partial-hypergraph-rel}
Let $\Gamma := (V, E)$ be a hypergraph and $G$ be a compact
matrix quantum group which acts on $C^*(\Gamma)$
via $\alpha \colon C^*(\Gamma) \to C^*(\Gamma) \otimes C(G)$ with 
\[
  \alpha(p_v) = \sum_{w \in V} p_w \otimes u_{wv}
  \quad
  \forall v \in V,
  \qquad
  \alpha(s_e) = \sum_{f \in E} s_f \otimes u_{fe}
  \quad
  \forall e \in E,
\]
for some elements $u_{vw} \in C(G)$ for all $v, w\in V$ and $u_{ef} \in C(G)$ for all $e, f \in E$. Then 
$u_V := {(u_{vw})}_{v,w \in V}$ is a magic unitary.
If additionally $u_E := {(u_{ef})}_{e,f\in E}$ is a magic unitary, then 
\[
  A_r u_E = u_V A_r.
\]
\end{lemma}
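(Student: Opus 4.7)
The plan is to prove this in two stages: first, establish that $u_V$ is a magic unitary by transferring algebraic relations on the generators $\{p_v\}_{v\in V}$ through the $*$-homomorphism $\alpha$ together with the coaction axioms; and second, under the added hypothesis on $u_E$, derive the intertwiner relation by applying $\alpha$ to Relation~\ref{def:hypergraph-Cstar-alg-R1} of \Cref{def:hypergraph-Cstar-alg}.

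For the magic unitary property, the key tool is linear independence of the mutually orthogonal projections $\{p_v\}_{v\in V}$ in $C^*(\Gamma)$, which lets me match coefficients in identities of the form $\sum_w p_w \otimes a_w = \sum_w p_w \otimes b_w$ and conclude $a_w = b_w$. Applying $\alpha$ to the relations $p_v^* = p_v = p_v^2$ and $p_v p_{v'} = \delta_{v v'} p_v$ and expanding via $\alpha(p_v) = \sum_w p_w \otimes u_{wv}$, I first obtain $u_{wv}^* = u_{wv}^2 = u_{wv}$ and the row-orthogonality $u_{wv} u_{wv'} = \delta_{v v'} u_{wv}$. Next, the coaction identity $(\alpha \otimes \id)\alpha = (\id \otimes \Delta)\alpha$ applied to $p_v$ yields $\Delta(u_{wv}) = \sum_x u_{wx} \otimes u_{xv}$, and the counit condition $(\id \otimes \varepsilon)\alpha|_\B = \id$ gives $\varepsilon(u_{wv}) = \delta_{wv}$. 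Combining the row-orthogonality with self-adjointness of the entries, a direct computation gives $u_V^* u_V = \operatorname{diag}\!\big(\sum_w u_{wv}\big)_{v \in V}$, and following the strategy in the proof of \Cref{lem:GH-uV-magic} via~\cite[Proposition 3.2]{woronowicz87}, together with the unitality of $\alpha$, I conclude that all row and column sums of $u_V$ equal $1$, so $u_V$ is a magic unitary.

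For the intertwiner relation, I apply $\alpha$ to $s_e^* s_e = \sum_{v\in r(e)} p_v$. On the left,
\[
\alpha(s_e)^* \alpha(s_e) = \sum_{f_1, f_2} s_{f_1}^* s_{f_2} \otimes u_{f_1 e}^* u_{f_2 e};
\]
using $s_{f_1}^* s_{f_2} = \delta_{f_1 f_2} \sum_{v \in r(f_1)} p_v$ together with the magic unitary assumption on $u_E$ (so $u_{fe}^* u_{fe} = u_{fe}$), this collapses to $\sum_v p_v \otimes \sum_{f\colon v \in r(f)} u_{fe}$. On the right, $\alpha$ applied to $\sum_{v \in r(e)} p_v$ expands to $\sum_w p_w \otimes \sum_{v \in r(e)} u_{wv}$. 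Matching coefficients via linear independence of $\{p_v\}$ yields
\[
\sum_{f \colon v \in r(f)} u_{fe} = \sum_{w \in r(e)} u_{vw} \qquad \forall v \in V,\, e \in E,
\]
which is exactly $A_r u_E = u_V A_r$ by \Cref{rem:hypergraph-qaut-rels}.

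The main obstacle is the first stage, specifically the row- and column-sum conditions for $u_V$; the projection and row-orthogonality properties follow effortlessly from the $*$-homomorphism property, but the sum conditions need the full coaction structure, together with Woronowicz's one-sided invertibility criterion as applied in the proof of \Cref{lem:GH-uV-magic}. Once $u_V$ is a magic unitary, the second stage is a direct consequence of the fundamental relation $s_e^* s_e = \sum_{v \in r(e)} p_v$ defining the hypergraph $C^*$-algebra.
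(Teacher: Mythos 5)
Your proposal is correct and follows the paper's proof: the second part (applying $\alpha$ to $s_e^* s_e = \sum_{v \in r(e)} p_v$, using that $u_E$ is a magic unitary, and comparing coefficients via linear independence of the $p_v$) is identical to the paper's argument, and for the first part the paper simply cites Wang's Theorem 3.1, which is exactly the argument you sketch. The only loose point is your final step for the column sums of $u_V$: the direct computation yields $u_V^* u_V$ diagonal but not yet the identity, so one needs Wang's full corepresentation-invertibility argument rather than a verbatim transfer of the strategy of \Cref{lem:GH-uV-magic} (which starts from a one-sided inverse equal to $1$); this imprecision is internal to the cited result and not a gap relative to the paper.
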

\begin{proof}
The proof that $u_V$ is a magic unitary is contained in the proof of~\cite[Theorem 3.1]{wang98}, since 
the elements $p_v$ are orthogonal projections which sum to $1$.
For the second part of the statement, assume that $u_E$ is also a magic unitary and consider
the relation
\[
  s_e^* s_e = \sum_{\substack{v \in V \\ v \in r(e)}} p_v \quad \forall e \in E
\]
from \Cref{def:hypergraph-Cstar-alg}. By applying $\alpha$ to the left side and using the relation, we obtain 
\[
  \alpha(s_e^* s_e)
  =
  \sum_{f_1, f_2 \in E} s_{f_1}^* s_{f_2} \otimes \underbrace{u_{f_1 e}^* u_{f_2 e}}_{\delta_{f_1 f_2} u_{f_1 e}}
  =
  \sum_{f \in E} s_f^* s_f \otimes u_{f e}
  =
  \sum_{f \in E} \sum_{\substack{v \in V \\ v \in r(f)}} p_v \otimes u_{f e},
\]
which can be rewritten as
\[
  \alpha(s_e^* s_e)  
  =
  \sum_{v \in V}  \sum_{f \in E} {(A_r)}_{vf} \, p_v \otimes u_{f e}
  =
  \sum_{v \in V} p_v \otimes \bigg(\sum_{f \in E} {(A_r)}_{v f} \, u_{f e} \bigg).
\]
Similarly, by applying $\alpha$ to the right side of the 
original equation, we obtain
\begin{align*}
  \sum_{\substack{v \in V \\ v \in r(e)}} \alpha(p_v)
  &=
  \sum_{\substack{v \in V \\ v \in r(e)}}
  \sum_{w \in V} p_w \otimes u_{w v} \\
  &=
  \sum_{v, w \in V}
  {(A_r)}_{v e} \, p_w \otimes u_{w v}
  =
  \sum_{w \in V}  p_w \otimes \bigg( \sum_{v \in V}  u_{w v} {(A_r)}_{v e} \bigg).
\end{align*}
Thus,
\[
  {(A_r u_E)}_{v e} 
  = \sum_{f \in E} {(A_r)}_{v f} \, u_{f e}
  = \sum_{w \in V} u_{w v} {(A_r)}_{v e}
  = {(A_r u_V)}_{v e} 
  \quad
  \forall v \in V
\]
by using the fact that the elements $p_v$ are linearly independent as orthogonal projections, such that we can compare the terms in the previous sums. This shows $A_r u_E = u_V  A_r$.
\end{proof}

By combining the previous lemma with the actions on $C^*(\Gamma)$
and $C^*(\Gamma')$, we can now show that $\Aut^+(\Gamma)$ is the largest
quantum group which acts both on $C^*(\Gamma)$ and $C^*(\Gamma')$
in the sense of \Cref{thm:action-existence} and \Cref{lem:gamma-prime-action}.

\begin{theorem}\label{thm:action-maximal}
Let $\Gamma := (V, E)$ be a hypergraph 
and $G$ be a compact matrix quantum group which
acts faithfully on $C^*(\Gamma)$ and $C^*(\Gamma')$
via
\[
  \alpha_1 \colon C^*(\Gamma) \to C^*(\Gamma) \otimes C(G),
  \qquad
  \alpha_2 \colon C^*(\Gamma') \to C^*(\Gamma') \otimes C(G),
\]
which satisfy
\begin{alignat*}{3}
  \alpha_1(p_v) &= \sum_{w \in V} p_w \otimes u_{wv}
  \quad
  \forall v \in V,
  &\qquad
  \alpha_1(s_e) &= \sum_{f \in E} s_f \otimes u_{fe}
  \quad
  \forall e \in E, \\
  \alpha_2(p_e) &= \sum_{f \in E} p_f \otimes u_{fe}
  \quad
  \forall e \in E,
  &\qquad
  \alpha_2(s_v) &= \sum_{w \in V} s_w \otimes u_{wv}
  \quad
  \forall v \in V
\end{alignat*}
for some elements $u_{vw} \in C(G)$ for all $v, w\in V$ and $u_{ef} \in C(G)$ for all $e, f \in E$. Then $G \subseteq \Aut^+(\Gamma)$.
\end{theorem}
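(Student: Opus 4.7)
The plan is to construct a surjective morphism of compact quantum groups $\varphi \colon C(\Aut^+(\Gamma)) \to C(G)$ sending the canonical generators of $C(\Aut^+(\Gamma))$, which I also denote by $u_{vw}$ and $u_{ef}$, to the elements of $C(G)$ appearing in the hypothesis. By the universal property in \Cref{def:hypergraph-qaut}, it suffices to verify that $u_V := {(u_{vw})}_{v,w \in V}$ and $u_E := {(u_{ef})}_{e,f \in E}$, regarded as matrices over $C(G)$, are magic unitaries and satisfy the intertwiner relations $A_s u_E = u_V A_s$ and $A_r u_E = u_V A_r$.

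All four relations fall out of two applications of \Cref{lem:partial-hypergraph-rel}. Applied to $\alpha_1$, the first part of the lemma immediately gives that $u_V$ is a magic unitary. Applied to $\alpha_2$ on $C^*(\Gamma')$, whose vertex set is $E$, the corresponding vertex magic unitary is exactly $u_E$, so $u_E$ is also a magic unitary. Now the second part of \Cref{lem:partial-hypergraph-rel} applies in both directions: for $\alpha_1$ it yields $A_r u_E = u_V A_r$, and for $\alpha_2$ it yields the corresponding intertwiner with respect to the range map $r'$ of $\Gamma'$. Unpacking \Cref{def:gamma-prime} shows that the incidence matrix $A_{r'}$ equals $A_s^T = A_s^*$, so this latter relation reads $A_s^* u_V = u_E A_s^*$, which is equivalent to the missing intertwiner $A_s u_E = u_V A_s$ via \Cref{prop:star-intertwiner}.

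The universal property of $C(\Aut^+(\Gamma))$ now produces the desired unital $*$-homomorphism $\varphi$. It is a morphism of compact quantum groups: applying the action axiom $(\alpha_1 \otimes \id) \circ \alpha_1 = (\id \otimes \Delta_G) \circ \alpha_1$ to $p_v$ and comparing the coefficients of the mutually orthogonal (hence linearly independent) projections $p_w$ forces $\Delta_G(u_{wv}) = \sum_{x \in V} u_{wx} \otimes u_{xv}$, matching the comultiplication on $C(\Aut^+(\Gamma))$; the same argument run against the partial isometries $s_e$ (using the orthogonality relations from \Cref{def:hypergraph-Cstar-alg}) handles the $u_{ef}$. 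Finally, the image $\varphi(C(\Aut^+(\Gamma))) \subseteq C(G)$ is a sub-$C^*$-algebra stable under $\Delta_G$, so it defines a quotient of $G$ in the sense of the paper, and by construction $\alpha_1$ already takes values in $C^*(\Gamma) \otimes \varphi(C(\Aut^+(\Gamma)))$. Faithfulness of $\alpha_1$ then forces $\varphi$ to be surjective, giving $G \subseteq \Aut^+(\Gamma)$.

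The main non-routine point is the observation that the range-incidence matrix of $\Gamma'$ is exactly $A_s^*$, which is what makes the action on $C^*(\Gamma')$ supply the one intertwiner relation that $\alpha_1$ alone is unable to produce from \Cref{lem:partial-hypergraph-rel}. Once this identification is in place, the argument is a straightforward combination of \Cref{lem:partial-hypergraph-rel}, the universal property of $C(\Aut^+(\Gamma))$, and the faithfulness hypothesis.
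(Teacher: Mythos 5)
Your proposal is correct and follows essentially the same route as the paper's proof: both magic unitary relations from the first part of \Cref{lem:partial-hypergraph-rel} applied to $\alpha_1$ and $\alpha_2$, the intertwiner $A_r u_E = u_V A_r$ from $\alpha_1$, the key identification $A_{r'} = A_s^*$ combined with \Cref{prop:star-intertwiner} to recover $A_s u_E = u_V A_s$ from $\alpha_2$, and then the universal property, the comultiplication compatibility via linear independence of the generators, and surjectivity from faithfulness. The only cosmetic difference is that the paper derives $\Delta(u_{ef}) = \sum_g u_{eg}\otimes u_{gf}$ from $\alpha_2$ applied to the projections $p_e$ of $C^*(\Gamma')$ rather than from the partial isometries $s_e$ of $C^*(\Gamma)$, which changes nothing of substance.
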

\begin{proof}
By applying the first part of \Cref{lem:partial-hypergraph-rel} to $\alpha_1$ and $\alpha_2$, we obtain
that the matrices $u_V := {(u_{vw})}_{v,w \in V}$
and $u_E := {(u_{ef})}_{e,f\in E}$ are magic unitaries. Then the second
part of \Cref{lem:partial-hypergraph-rel} yields, that 
\[
  A_r u_E = u_V A_r, \quad A_{r'} u_V = u_E A_{r'}.
\]
But $A_{r'} = A_s^*$, such that $A_s^* u_V = u_E A_s^*$, which implies $A_s u_E = u_V A_s$ by \Cref{prop:star-intertwiner}.
This shows that the elements $u_{vw}$ and $u_{ef}$ satisfy the relations
from \Cref{def:hypergraph-qaut}. Hence, by the universal property of $C(\Aut^+(\Gamma))$,
there exists a unital $*$-homomorphism
\[
  \varphi \colon C(\Aut^+(\Gamma)) \to C(G)
\]
which maps the generators of $C(\Aut^+(\Gamma))$ to the entries of $u_V$ and $u_E$.
Next, we show that $\varphi$ is a morphism of compact quantum groups.
Let $w \in V$ and observe that
\begin{align*}
  (\alpha_1 \otimes \id)(\alpha_1(w)) 
  &= 
  \sum_{x \in V} \alpha_1(p_{x}) \otimes u_{x w} \\
  &=
  \sum_{x \in V} \sum_{v \in V} p_{v} \otimes u_{v x} \otimes u_{x w}
  =
  \sum_{v \in V} p_{v} \otimes \bigg(\sum_{x \in V} u_{v x} \otimes u_{x w}\bigg)
\end{align*}
and
\[
  (\id \otimes \Delta)(\alpha_1(w)) 
  =
  \sum_{v \in V} p_{v} \otimes \Delta(u_{v w}).
\]
Because $\alpha_1$ is an action, it satisfies 
$(\alpha_1 \otimes \id) \circ \alpha_1 = (\id \otimes \Delta) \circ \alpha_1$.
Further, all $p_v$ are linearly independent as orthogonal projections, which implies
\[
  \Delta(u_{vw}) = \sum_{w \in V} u_{v x} \otimes u_{x w}
  \quad 
  \forall v, w \in V.
\]
Denote with $\widehat{u}$ the fundamental representation of $\Aut^+(\Gamma)$. Then the previous equation yields
\[
  \Delta(\varphi(\widehat{u}_{v w}))
  = 
  \sum_{x \in V} u_{v x} \otimes u_{x w}
  =
  \sum_{x \in V} \varphi(\widehat{u}_{v x}) \otimes \varphi(\widehat{u}_{x w}),
  =
  (\varphi \otimes \varphi)(\Delta(\widehat{u}_{v w}))
\]
for all $v, w \in V$. Similarly, one shows 
\[
  \Delta(\varphi(\widehat{u}_{e f}))
  =
  (\varphi \otimes \varphi)(\Delta(\widehat{u}_{e f}))
  \quad 
  \forall e, f \in E
\]
by using the action $\alpha_2$.
Thus, $\varphi$ is a morphism of compact quantum groups.
Further, $\varphi$ is surjective, because $\alpha_1$ and $\alpha_2$ are faithful.
Otherwise, the image of $\varphi$ would define a proper quotient quantum group of 
$G$ acting on $C^*(\Gamma)$ and $C^*(\Gamma')$ in the same way, which is impossible.
Therefore, $G \subseteq \Aut^+(\Gamma)$.
\end{proof}

Note that if $\Gamma$ is an undirected
hypergraph, then $\Gamma' = \Gamma^*$ in \Cref{def:gamma-prime}.
In this case, $\Aut^+(\Gamma)$ is the maximal quantum groups 
which acts faithfully on both $C^*(\Gamma)$ and $C^*(\Gamma^*)$ in a 
compatible way.

Further, since $\Aut^+(\Gamma)$ acts maximally, we can view $\Aut^+(\Gamma)$ as the 
quantum symmetry group of $C^*(\Gamma)$ in the sense of \Cref{thm:action-maximal}.

\section{Open questions}\label{sec:open-questions}

In this last section, we present some remaining open questions. First, it would be interesting to 
compute further quantum automorphism groups of concrete hypergraphs.
Consider for example a complete hypergraph in the following sense.

\begin{definition}
Let $V$ be a finite set. Then the \textit{complete hypergraph} on $V$ is given by $\Gamma := (V, \PP(V) \times \PP(V))$ with 
source and range maps
\[
  s(X, Y) = X, \quad r(X, Y) = Y.
\]
\end{definition}

Since $\Gamma$ has no multi-edges, we know that $\Aut^+(\Gamma) \subseteq S_V^+$ by \Cref{corr:subgroup-SV}.
However, it remains open if this inclusion is proper or if $\Aut^+(\Gamma) = S_V^+$ holds.

\begin{question}
What is the quantum automorphism group of a complete hypergraph?
\end{question}

Further, we showed in \Cref{sec:qsym-graphs} that our quantum automorphism
group generalizes the quantum automorphism group of classical graphs by Bichon.
However, it might be possible that each quantum automorphism
group of a hypergraph can be realized as the quantum automorphism group of a possibly larger classical graph. 

\begin{question}
  Let $\Gamma$ be a hypergraph. Can one construct a classical graph $\Gamma'$,
  such that $\Aut^+(\Gamma) = \AutBichon(\Gamma')$? 
\end{question}

It also remains open if there exists a Banica version for the quantum automorphism group of hypergraphs in the following sense.

\begin{question}
  Is there an alternative definition of quantum automorphism groups of hypergraphs that reduces to $\AutBanica(\Gamma)$ in the case of classical directed graphs and 
  which acts on $C^*(\Gamma)$ in a natural way?
\end{question}

Finally, Hahn~\cite{hahn81} characterized hypergraphs for which 
the classical automorphism group of their product is given by the wreath product of their automorphism groups.

\begin{question}
  Can the results of Hahn be generalized to the quantum setting using the free wreath product of Bichon~\cite{bichon04}?
\end{question}

\bibliographystyle{amsplain}
\bibliography{notes}

\end{document}